\theoremstyle{plain}
\newtheorem{thm}{Theorem}[section] 
\newtheorem{prop}[thm]{Proposition}
\newtheorem{cor}[thm]{Corollary}
\newtheorem{lemme}[thm]{Lemma}
\date{}
\theoremstyle{definition}
\newtheorem{defn}[thm]{Definition}
\newtheorem{defn/prop}[thm]{Definition/Proposition}
\newtheorem{lem/def}[thm]{Lemma/Definition}
\newtheorem{defn/thm}[thm]{Definition/Theorem}
\newtheorem{exmp}[thm]{Example}
\newtheorem{rque}[thm]{Remark}
\title{Completed Iwahori-Hecke algebras and parahoric Hecke algebras for Kac-Moody groups over local fields}
\author{ Ramla \textsc{Abdellatif}\\ LAMFA -- UPJV  \\UMR CNRS 7352, 80 039 AMIENS Cedex 1, France\\ ramla.abdellatif@u-picardie.fr\\  \and Auguste \textsc{H{\'e}bert} \\Univ Lyon, UJM-Saint-Etienne CNRS\\ UMR CNRS 5208, F-42023, SAINT-ETIENNE, France\\ auguste.hebert@ens-lyon.fr}
\makeatletter \@addtoreset{figure}{section}\makeatother
\newcommand{\R}{\mathbb{R}}
\newcommand{\A}{\mathbb{A}}
\newcommand{\N}{\mathbb{N}}
\newcommand{\Z}{\mathbb{Z}}
\newcommand{\C}{\mathbb{C}}
\newcommand{\I}{\mathcal{I}}
\newcommand{\T}{\mathcal{T}}
\newcommand{\q}{\mathfrak{q}}
\newcommand{\F}{\mathcal{F}}
\newcommand{\conv}{\mathrm{conv}}
\newcommand{\supp}{\mathrm{supp}}
\newcommand{\HH}{\mathcal{H}}
\newcommand{\RR}{\mathcal{R}}
\newcommand{\wt}{\mathbf{w}}
\newcommand{\vt}{\mathbf{v}}
\newcommand{\ut}{\mathbf{u}}
\newcommand{\AAA}{\mathcal{A}}
\newcommand{\BLHH}{{^{\mathrm{BL}}\mathcal{H}}}
\newcommand{\FHH}{{^\mathrm{F}}\mathcal{H}}
\newcommand{\opp}{\mathrm{opp}}
\begin{document}

\maketitle

\begin{abstract}
Let $G$ be a split Kac-Moody group over a non-Archimedean local field. We define a completion of the Iwahori-Hecke algebra of $G$, then we compute its center and prove that it is isomorphic (via the Satake isomorphism) to the spherical Hecke algebra of $G$. This is thus similar to the situation for reductive groups. Our main tool is the masure $\mathcal{I}$ associated to this setting, which plays here the same role as Bruhat-Tits buildings do for reductive groups. In a second part, we associate a Hecke algebra to each spherical face $F$ of type $0$, extending a construction that was only known, in the Kac-Moody setting, for the spherical subgroup and for the Iwahori subgroup.
\end{abstract}

\section{Introduction}
Let $\mathbf{G}_0$ be a split reductive group over a non-Archimedean local field $\mathcal{K}$ and set $G_0=\mathbf{G}_0(\mathcal{K})$. An important tool to study complex representations of $G_{0}$ are Hecke algebras attached to each open compact subgroup of $G_{0}$: if $K$ is such a subgroup, the Hecke algebra $\mathcal{H}_K$ associated to $K$ is the convolution algebra of complex-valued $K$-bi-invariant functions on $G_0$ with compact support. Two choices of $K$ are of particular interest: the first one is when $K=K_s=\mathbf{G}(\mathcal{O})$ with $\mathcal{O}$ being the ring of integers of $\mathcal{K}$. In this case, $K_{s}$ is a maximal open compact subgroup of $G$ and $\mathcal{H}_s=\mathcal{H}_{K_s}$ is a commutative algebra called the \textbf{spherical Hecke algebra} of $G_0$. This algebra can be explicitly described through the Satake isomorphism: indeed, if $W^{v}$ denotes the Weyl group of $G_{0}$ and $Q^{\vee}$ is the coweight lattice of $G_{0}$, then $\mathcal{H}_s$ is isomorphic to the subalgebra $\C[Q^\vee]^{W^v}$ of $W^{v}-$invariant elements in the group algebra of $(Q^\vee,+)$. A second interesting choice is when $K = K_{I}$ is an Iwahori subgroup of $G_0$: then $\mathcal{H}:=\HH_{K_I}$ is called the \textbf{Iwahori-Hecke algebra} of $G_0$. This algebra comes with a basis (called the Bernstein-Lusztig basis) indexed by the affine Weyl group of $G_{0}$ and such that the product of two elements of this basis can be expressed via the Bernstein-Lusztig presentation \cite{lusztig1989affine}. This presentation enables us to compute the center of~$\mathcal{H}$ and to check that it is isomorphic to the spherical Hecke algebra of $G_{0}$. These results can be summarized as follows:
\[\HH_s \xrightarrow[\sim]{S}\C[Q^\vee]^{W^v}\overset{g}{\rightarrow} \HH, \quad\text{and}\quad\mathrm{Im}(g)=\mathcal{Z}(\HH),
\]
where $S$ denotes the Satake isomorphism and $g$ comes from the Bernstein-Lusztig basis.

This article aims to study how far this theory can be extended to split Kac-Moody groups. Among the different definitions of Kac-Moody groups that are available in the literature, we choose to use the definition given by Tits in \cite{tits1987uniqueness} as it is more algebraic. Given $\mathbf{G}$ a split Kac-Moody group over $\mathcal{K}$, set $G=\mathbf{G}(\mathcal{K})$. To study~$G$, Gaussent and Rousseau built in \cite{gaussent2008kac} an object $\I=\I(G)$ that they called a \textbf{masure} (also known as \textbf{affine ordered hovel}), and later extended by Rousseau \cite{rousseau2016groupes, rousseau2017almost}. This masure is a generalization of Bruhat-Tits buildings introduced in \cite{bruhat1972groupes, bruhat1984groupes} as it gives back the Bruhat-Tits building of $G$ when $\mathbf{G}$ is reductive. As a set, $\I$ is a union of apartments that are all isomorphic to a standard one (denoted by $\A$ in the sequel) and $G$ acts on $\I$. We still have an arrangement of hyperplanes, called \textbf{walls}, but in general this arrangement is not locally finite anymore. This explains why faces in $\A$ are not sets anymore but filters. Another main difference with Bruhat-Tits buildings is that in general, two points of $\I$ are not necessarily contained in a common apartment.

Analogues of $K_{s}$ and $K_{I}$ (and more generally of parahoric subgroups) can be defined as fixers of some specific faces in $\I$. When $\mathbf{G}$ is an affine Kac-Moody group, Braverman, Kazhdan and Patnaik attached to $G$ a spherical Hecke algebra and an Iwahori-Hecke algebra \cite{braverman2011spherical, braverman2016iwahori}, and they obtained a Satake isomorphism as well as Bernstein-Lusztig relations. All these results were generalized to arbitrary Kac-Moody groups by Bardy-Panse, Gaussent and Rousseau \cite{gaussent2014spherical, bardy2016iwahori}. In this framework, the Satake isomorphism appears as an isomorphism between $\HH_{s}:=\mathcal{H}_{K_s}$ and $\C\llbracket Y\rrbracket^{W^v}$, where $Y$ denotes a lattice that can be, in first approximation, thought of as the coroot lattice (though it can be really different, even in the affine case) and~$\C\llbracket Y\rrbracket$ is its Looijenga's algebra (which is a completion of the group algebra~$\C[Y]$, see Definition~\ref{DefLooijenga algebra}). So far, the analogy with the reductive case stops here. Indeed, let $\HH$ be the Iwahori-Hecke algebra of $G$: following what happens in the reductive case \cite{parkinson2006buildings}, one would expect the center of $\HH$ to be isomorphic to the spherical Hecke algebra $\HH_{s}$. Unfortunately, this is not the case, as we prove that the center of~$\HH$ is actually more or less trivial (see Lemma~\ref{lemCentre IH usuel}). Moreover, in the non-reductive case, $\C\llbracket Y\rrbracket^{W^v}$ is a set of infinite formal series that cannot embed into $\HH$, where all elements have finite support. All these reasons explain why we define a completion~$\hat{\mathcal{H}}$ of $\mathcal{H}$ as follows: letting $(Z^\lambda H_w)_{\lambda\in Y^+, w\in W^v}$ be the Bernstein-Lusztig basis of $\HH$, we define $\hat{\mathcal{H}}$ as the set of formal series $\hat{\mathcal{H}}$ whose support satisfies some conditions similar to what appears in the definition of $\C\llbracket Y\rrbracket$. One of our main results states that $\hat{\mathcal{H}}$ can be turned into an algebra when endowed with a well-defined convolution product compatible with the canonical inclusion of $\HH$ into $\hat{\mathcal{H}}$ (see Theorem~\ref{Thm Convolution pour les familles sommables} and Corollary~\ref{CorCompleted IH est une algebre}). We then determine the center of $\hat{\mathcal{H}}$ and show that it is isomorphic to $\C\llbracket Y\rrbracket^{W^v}$, as wanted (see Theorem~\ref{ThmCentreIHcompletee}). As before, these results can be summarized as follows:
\[\HH_s \xrightarrow[\textstyle\sim]{\textstyle~S~}\C\llbracket Y\rrbracket^{W^v} \overset{g}{\rightarrow}{\hat{\HH}}, \quad\text{and}\quad\mathrm{Im}(g)=\mathcal{Z}({\hat{\HH}}),\]
where $S$ denotes the Satake isomorphism and $g$ comes again from the Bernstein-Luzstig basis.

Another part of this paper is devoted to the construction of Hecke algebras attached to more general subgroups than $K_{s}$ and $K_{I}$. Recall that $K_{s}$ is the fixer of $\{0\}$ while~$K_{I}$ is the fixer of the type $0$ chamber $C^{+}_{0}$. When $G$ is reductive, any face~$F$ between~$\{0\}$ and $C_{0}^{+}$ corresponds to an open compact subgroup of $G$ (namely the parahoric subgroup associated with $F$) contained in its fixer $K_{F}$, hence one can use it to attach a Hecke algebra to $F$. This explains why it seems natural, in the non-reductive case, to wonder whether one can attach a Hecke algebra to $K_{F}$ for all faces~$F$ between~$\{0\}$ and $C_{0}^{+}$. We succeed in defining such an algebra when $F$ is spherical, which means that its fixer under the action of the Weyl group is finite. Our construction is very close to what is done for the Iwahori-Hecke algebra in \cite{bardy2016iwahori}. We also prove that when $F$ is not spherical and different from $\{0\}$ (that cannot happen if $G$ is affine), this construction fails because the structural constants are infinite.

Finally, recall that up to now, there was no known topology on $G$ that generalizes the usual topology on $\mathbf{G}_0(\mathcal{K})$, for $\mathbf{G}_0$ a split reductive group over $\mathcal{K}$, in which~$K_{s}$ and~$K_{I}$ are open compact subgroups of $\mathbf{G}_0(\mathcal{K})$. We prove (see Theorem~\ref{Thm Pas de topologie ouverte compacte pour les fixateurs}) that when $\mathbf{G}$ is not reductive, there is no way to turn $G$ into a topological group such that~$K_{s}$ or $K_{I}$ (or, more generally, any given parahoric subgroup of $G$) is open \hbox{compact}. This result implies in particular that one cannot define smooth representations of $G$ in the same way as in the reductive case.

The paper is organized as follows: we first recall the definition of masures in Section~\ref{secApartment standard}. The reader only interested in Iwahori-Hecke algebras can read the two first sections and the last one, and skip the rest of this section. Section~\ref{secRestriction topologique} is devoted to prove that $G$ cannot be turned into a topological group in which $K_{s}$ or $K_{I}$ is open compact. In Section~\ref{secAlgebre d'IH completee}, we define the completed Iwahori-Hecke algebra $\hat{\mathcal{H}}$ of $G$ and compute its center, as well as the center of $\HH$. Finally, we use Section~\ref{secAlgebres de Hecke generales} to attach a Hecke algebra to any spherical face between $\{0\}$ and $C_{0}^{+}$ and to prove that this construction fails if $F$ is not spherical and different from $\{0\}$.

\begin{rque}
As explained in Section~\ref{secApartment standard}, this paper is actually written in a more general framework, as we only need $\I$ to be an abstract masure and $G$ to be a strongly transitive group of (positive, type-preserving) automorphisms of $\I$. In particular, this applies to almost split (and not only split) Kac-Moody groups over local fields.
\end{rque}

\subsubsection*{Acknowledgements} We warmly thank St\'ephane Gaussent for suggesting our collaboration, for multiple discussions and for his useful comments on previous versions of this manuscript. We also thank Nicole Bardy-Panse and Guy Rousseau for discussions on this topic and for their comments on a previous version of this paper. Finally, we thank the referee for their valuable comments and suggestions, and for his/her interesting questions.

\section{Masures: general framework}
\label{secApartment standard}
We recollect here some well-known facts. Further details are available in the first two sections of \cite{rousseau2011masures}.
\subsection{Root generating system and Weyl groups}
\label{subRootGenSyst}
A \textbf{Kac-Moody matrix} (or \textbf{generalized Cartan matrix}) is a square matrix $A=(a_{i,j})_{i,j\in I}$ indexed by a finite set $I$, with integral coefficients, and such that:
\begin{enumerate}
\item
$\forall i\in I,\ a_{i,i}=2$;

\item
$\forall (i,j)\in I^2, (i \neq j) \Rightarrow (a_{i,j}\leq 0)$;

\item
$\forall (i,j)\in I^2,\ (a_{i,j}=0) \Leftrightarrow (a_{j,i}=0$).
\end{enumerate}
A \textbf{root generating system} is a $5$-tuple $\mathcal{S}=(A,X,Y,(\alpha_i)_{i\in I},(\alpha_i^\vee)_{i\in I})$
 made of a Kac-Moody matrix $A$ indexed by the finite set $I$, of two dual free $\Z$-modules $X$ and~$Y$ of finite rank, and of a free family $(\alpha_i)_{i\in I}$ (respectively $(\alpha_i^\vee)_{i\in I}$) of elements in~$X$ (resp. $Y$) called \textbf{simple roots} (resp. \textbf{simple coroots}) that satisfy $a_{i,j}=\alpha_j(\alpha_i^\vee)$ for all~$i,j$ in $I$. Elements of $X$ (respectively of $Y$) are called \textbf{characters} (resp. \textbf{cocharacters}).

Fix such a root generating system $\mathcal{S}=(A,X,Y,(\alpha_i)_{i\in I},(\alpha_i^\vee)_{i\in I})$ and set $\A:=Y\otimes \R$. Each element of $X$ induces a linear form on $\A$, hence $X$ can be seen as a subset of the dual $\A^*$. In particular, the $\alpha_{i}$'s (with $i \in I$) will be seen as linear forms on $\A$. This allows us to define, for any $i \in I$, an involution $r_{i}$ of $\A$ by setting $r_{i}(v):= v-\alpha_i(v)\alpha_i^\vee$ for any $v \in \A$. Note that the points fixed by $r_{i}$ are exactly the elements of $\ker \alpha_{i}$. We define the \textbf{Weyl group of $\mathcal{S}$} as the subgroup $W^{v}$ of $\mathrm{GL}(\A)$ generated by the finite set $\{r_{i}, \ i \in I\}$. The pair $(W^{v}, \{r_{i}, \ i \in I\})$ is a Coxeter system, hence we can consider the length $\ell(w)$ with respect to $\{r_{i}, \ i \in I\}$ of any element $w$ of $W^{v}$.

For any $x \in \A$, we set $\underline{\alpha}(x)=(\alpha_i(x))_{i\in I}\in \R^I$. Let $P^{\vee}:= \{v\in \A \mid \underline{\alpha}(v)\in \Z^I\}$ be the \textbf{coweight-lattice}, which is not a lattice when $ \A_\mathrm{in}:= \bigcap_{i\in I}\ker \alpha_{i}$ is non-zero, $Q^{\vee}:= \bigoplus_{i\in I}\Z\alpha_{i}^{\vee}$ be the \textbf{coroot-lattice} and $Q^{\vee}_{\mathbb{R}}= \bigoplus_{i\in I}\R\alpha_{i}^{\vee}$. Furthermore setting $Q_{+}^{\vee}:= \bigoplus_{i\in I}\N \alpha_{i}^{\vee}$, we can define a pre-order $\leq_{Q^\vee}$ on $\A$ as follows: for any $x,y \in \A$, we say that $x\leq_{Q^{\vee}}y$ iff $y-x\in Q^{\vee}_{+}$. We also set $Q^{\vee}_{-}:=-Q^{\vee}_{+}$ for future reference.

There is an action of the Weyl group $W^{v}$ on $\A^{*}$ given by the following formula:
\[ \forall x \in \A,\, w \in W^{v},\, \alpha \in \A^{*}, \quad (w\cdot \alpha)(x):= \alpha(w^{-1}\cdot x).\]
Let $\Phi:= \{w\cdot \alpha_i\mid (w,i)\in W^{v}\times I\}$ be the set or \textbf{real roots}: then $\Phi$ is a subset of $Q:= \bigoplus_{i\in I}\Z\alpha_i$. We will also use the set $\Delta:= \Phi \cup \Delta_\mathrm{im}^{+} \cup \Delta_\mathrm{im}^{-} \subset Q$ of all roots, as defined in \cite{kac1994infinite}. Note that $\Delta$ is stable under the action of $W^{v}$. For any root $\alpha \in \Delta$, we set $\Lambda_{\alpha}:= \Z$ if $\alpha \in \Phi$, and $\Lambda_{\alpha}:= \R$ otherwise (\textit{i.e} if $\alpha \in \Delta_\mathrm{im}:= \Delta_\mathrm{im}^{+} \cup \Delta_\mathrm{im}^{-}$). For any pair $(\alpha, k) \in \Delta \times \Z$, we set
\begin{equation}
\label{definition D(alpha k)}
D(\alpha,k):= \{t\in \A \mid \alpha(t)+k\geq 0 \} \quad\text{and}\quad D^\circ(\alpha,k):=\{t\in \A \mid \alpha(t)+k > 0\}.
\end{equation}
For any root $\alpha \in \Delta$, we also set $D(\alpha,+\infty):= \A$.

Finally, we let $W:=Q^{\vee}\rtimes W^{v}\subset\mathrm{GA}(\A)$ be the \textbf{affine Weyl group of $\mathcal{S}$}, where $\mathrm{GA}(\A)$ denotes the group of affine isomorphisms of $\A$. Note that $W \subset P^{\vee} \rtimes W^{v}$ and that $\alpha(P^{\vee})$ is contained in $\Z$ for any $\alpha \in Q$. Consequently, if $\tau$ is a translation of $\A$ of vector $p \in P^{\vee}$, then for any $\alpha \in Q$, $\tau$ acts by permutations on the set $\{ D(\alpha,k), \ k \in \Z\}$. On the other hand, as $W^{v}$ stabilizes $\Delta$, any element of $W^{v}$ permutes the sets of the form $D(\alpha,k)$, where $\alpha$ runs over $\Delta$. Hence we have an action of $W$ on $\{D(\alpha, k), \ (\alpha, k) \in \Delta \times \Z \}$.

\subsection{Vectorial faces and Tits cone}
\label{subsecVectorialFaces}
As in the reductive case, define the \textbf{fundamental chamber} as $C_{f}^{v}:= \{v\in \A \mid \forall i \in I,\, \alpha_i(v)>0\}$. For any subset $J$ in $I$, set
\[
F^{v}(J)=\{v\in \A \mid \forall j \in J, \, \alpha_{j}(v)=0 \ \text{ and } \forall i \in I \setminus J,\, \alpha_{i}(v)>0\};
\]
then the closure $\overline{C_{f}^{v}}$ of $C_{f}^{v}$ is exactly the union of all $F^{v}(J)$'s for $J \subset I$. The \textbf{positive vectorial faces} (resp. \textbf{negative vectorial faces}) are defined as the sets $w\cdot F^{v}(J)$ (resp. $-w\cdot F^{v}(J)$) for $w \in W^{v}$ and $J \subset I$, and a \textbf{vectorial face} is either a positive vectorial face or a negative one. A \textbf{positive chamber} (resp. \textbf{negative chamber}) is a cone of the form $w\cdot C^{v}_{f}$ (resp. $-w\cdot C^{v}_{f}$) for some $w \in W^{v}$. Note that for any $x \in C^{v}_{f}$ and any $w \in W^{v}$, we have $(w\cdot x = 1) \Rightarrow (w = 1)$, which ensures that the action of $w \in W^{v}$ on the set of positive chambers is simply transitive.

Let $\mathcal{T}:= \bigcup_{w\in W^{v}} w\cdot \overline{C^{v}_{f}}$ be the \textbf{Tits cone} and $-\mathcal{T}$ be the negative cone. We can use it to define a $W^{v}$-invariant pre-order $\leq$ on $\A$ as follows: $$\forall (x,y) \in \A^{2}, \quad x \leq y \iff y-x \in \mathcal{T}.$$ We also set $Y^{+}:= \T \cap Y$ and $Y^{++}:= Y \cap \overline{C^{v}_{f}}$. We can now recall the following simple but very useful result \cite[Lem.\,2.4 a)]{gaussent2014spherical}.
\begin{lemme}
\label{lemLemme 2.4 a) de GR14}
For any $\lambda\in Y^{++}$ and any $w\in W^v$, we have $w\cdot \lambda \leq_{Q^\vee} \lambda$.
\end{lemme}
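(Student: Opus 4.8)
The plan is to induct on the length $l(w)$ of $w$ with respect to the Coxeter generators $\{r_i \mid i \in I\}$. The case $l(w) = 0$ is trivial since then $w = 1$ and $\lambda - w.\lambda = 0 \in Q^\vee_+$. For the inductive step, write $w = r_i w'$ with $l(w') = l(w) - 1$ for some $i \in I$, so that by the induction hypothesis $\lambda - w'.\lambda \in Q^\vee_+$. Then
\[
\lambda - w.\lambda = (\lambda - w'.\lambda) + (w'.\lambda - r_i w'.\lambda),
\]
and since $Q^\vee_+$ is stable under addition it suffices to show that the second summand $w'.\lambda - r_i(w'.\lambda)$ lies in $Q^\vee_+$. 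By definition of $r_i$ we have $w'.\lambda - r_i(w'.\lambda) = \alpha_i(w'.\lambda)\,\alpha_i^\vee$, so the claim reduces to showing $\alpha_i(w'.\lambda) \geq 0$.

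The key step is therefore to establish that $\alpha_i(w'.\lambda) \geq 0$ whenever $l(r_i w') > l(w')$. This is a standard fact from Coxeter theory combined with positivity of $\lambda$: the condition $l(r_i w') > l(w')$ means $w'^{-1}(\alpha_i)$ is a positive (real) root, i.e. $w'^{-1}.\alpha_i \in \sum_{j\in I}\N\alpha_j$ (in $Q$). Since $\lambda \in Y^{++} = Y \cap \overline{C^v_f}$ we have $\alpha_j(\lambda) \geq 0$ for all $j \in I$, hence $\alpha_i(w'.\lambda) = (w'^{-1}.\alpha_i)(\lambda) \geq 0$ as it is a nonnegative integer combination of the $\alpha_j(\lambda) \geq 0$. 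This gives $\alpha_i(w'.\lambda)\alpha_i^\vee \in Q^\vee_+$ and closes the induction.

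The main obstacle — really the only nontrivial input — is the equivalence between the length-increasing condition $l(r_i w') > l(w')$ and the positivity of the root $w'^{-1}.\alpha_i$; this is the classical characterization of the length function in a Coxeter system (see e.g. the references for Coxeter groups, or \cite{rousseau2011masures}), applied here to the real root system $\Phi$. Once this is granted, everything else is the elementary computation above using that $\overline{C^v_f}$ is cut out by the inequalities $\alpha_i \geq 0$ and that $Q^\vee_+$ is an additive monoid. One should just take care that $w = r_iw'$ with $l(w') < l(w)$ can indeed always be arranged (take a reduced word for $w$ and peel off the first letter), which is immediate.
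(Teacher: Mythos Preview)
Your argument is correct. The paper does not give its own proof of this lemma: it simply recalls it as Lemma~2.4~a) of \cite{gaussent2014spherical}. Your induction on $l(w)$, reducing to the Coxeter-theoretic fact that $l(r_i w') > l(w')$ forces $w'^{-1}.\alpha_i \in \Phi^+ \subset Q_+$, is the standard way to establish this and is essentially what the cited reference does as well.
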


\subsection{Filters and masures}
This section aims to recall what masures are. As stated in the introduction, the reader only interested in the completion of Iwahori-Hecke algebras can skip this section and go directly to Section~\ref{secAlgebre d'IH completee}.

Masures were first introduced for symmetrizable split Kac-Moody groups over a valued field whose residue field contains $\C$ by Gaussent and Rousseau \cite{gaussent2008kac}. Later, Rousseau axiomatized this construction in \cite{rousseau2011masures}, then generalized it with further developments to almost-split Kac-Moody groups over non-Archimedean local fields in \cite{rousseau2016groupes, rousseau2017almost}. For the reader familiar with this work, let us mention that we consider here semi-discrete masures which are thick of finite thickness.

\subsubsection{Filters, sectors and rays}
\begin{defn}
A \textbf{filter on a set $E$} is a non-empty set $F$ of non-empty subsets of~$E$ that satisfies the following conditions:
\begin{itemize}
\item for any subsets $S_{1}, S_{2}$ of $E$ that both belong to $F$, then $S_{1} \cap S_{2}$ belongs to $F$;
\item for any subsets $S_{1}, S_{2}$ of $E$ with $S_{1}$ in $F$ and $S_{1} \subset S_{2}$, then $S_{2}$ belongs to $F$.
\end{itemize}
\end{defn}

Given a filter $F$ on a set $E$ and a subset $E'$ of $E$, we say that \textbf{$F$ contains $E'$} if every element of $F$ contains $E'$. If $E'$ is non-empty, then the set $F_{E'}$ of all subsets of~$E$ containing $E'$ is a filter on $E$ called the \textbf{filter associated to $E'$}. By language abuse and to ease notations, we will sometimes write that $E'$ is a filter, by identification of~$E'$ with $F_{E'}$.

Now, let $F$ be a filter on a finite-dimensional real affine space $E$. We define its \textbf{closure} $\overline{F}$ as the filter of all subsets of $E$ that contain the closure of some arbitrary element of $F$, and its \textbf{convex hull} $\mathrm{conv}(F)$ as the filter of all subsets of $E$ that contain the convex hull of some arbitrary element of $F$. Said differently, we have:
\[ \overline{F}:= \ \{ S \subset E \mid \exists S' \in F, \ \overline{S'} \subset S \} \text{ and} \ \mathrm{conv}(F):= \{S \subset E \mid \exists S' \in F, \ \mathrm{conv}(S') \subset S \}.
\]

Given two filters $F_{1}$ and $F_{2}$ on the same set $E$, we say that \textbf{$F_{1}$ is contained in $F_{2}$} iff any subset of $E$ contained $F_{2}$ is in $F_{1}$. Similarly, we say that the filter $F_{1}$ is \textbf{contained in a subset} $\Omega$ of $E$ iff any subset of $E$ contained in $\Omega$ is in $F_{1}$.

Let $\Omega$ be a subset of $\A$ containing an element $x$ in its closure $\overline{\Omega}$ of $\Omega$. The \textbf{germ of $\Omega$ in $x$} is defined as the filter $\mathrm{germ}_{x}(\Omega)$ of all subsets of $\A$ containing some neighborhood of $x$ in $\Omega$. A \textbf{sector} in $\A$ is a translate $\mathfrak{s}:= x + C^{v}$ of a vectorial chamber $C^{v}= \pm w\cdot C_{f}^{v}$ (with $w \in W^{v}$) by an element $x \in \A$. The point $x$ is called the \textbf{base point} of the sector~$\mathfrak{s}$ and the chamber $C^{v}$ is called its \textbf{direction}. One can easily check that the intersection of two sectors having the same direction is a sector with the same direction.

Given a sector $\mathfrak{s}:= x + C^{v}$ as above, the \textbf{sector-germ of $\mathfrak{s}$} is the filter $\mathfrak{S}$ of all subsets of $\A$ containing an $\A$-translate of~$\mathfrak{s}$. Note that it only depends on the direction $C^{v}$ of~$\mathfrak{s}$. In particular, we denote by $\pm \infty$ the sector-germ of $\pm C^{v}_{f}$.

Finally, let $\delta$ be a ray with base point $x$ and let $y \not= x$ be another point on $\delta$ (which amounts to say that $\delta$ contains the interval $]x,y]=[x,y]\setminus\{x\}$, as well as the interval $[x,y]$). We say that $\delta$ is \textbf{pre-ordered} (resp. \textbf{generic}) if either $y \leq x$ or $x \leq y$ (resp. if $y-x\in \pm\mathring \T$, where $\pm\mathring \T$ denotes the interior of the cone $\pm \T$).

\subsubsection{Enclosures and faces}
We keep the notations introduced at the end of Section~\ref{subRootGenSyst}. Given a filter $F$ on $\A$, we define its \textbf{enclosure} $\mathrm{cl}_{\A}(F)$ as the filter of all subsets of $\A$ containing some element of $F$ of the form $ \bigcap_{\alpha \in \Delta} D(\alpha, k_{\alpha})$, with $k_{\alpha} \in \Z \cup \{+\infty\}$ for any $\alpha \in \Delta$. Sets of the form $D(\alpha,k)$ with $\alpha \in \Phi$ and $k \in \Z$ are called \textbf{half-apartments} in $\A$. Sets of the form $M(\alpha, k):= \{ t \in \A \mid \alpha(t) + k = 0\}$ with $\alpha \in \Phi$ and $k \in \Z$ are called \textbf{walls} in $\A$.

A \textbf{local face in $\A$} is a filter $F^{\ell}$ of the form $\mathrm{germ}_{x}(x + F^{v})$, where $x \in \A$ is called the \textbf{vertex} of $F^{\ell}$ and a vectorial face $F^{v} \subset \A$ is a vectorial face called the \textbf{direction} of $F^{\ell}$. To keep track of the elements $x$ and $F^{v}$, such a local face may be denoted $F^{\ell}(x,F^{v})$. A local face is said to be \textbf{spherical} when its direction is spherical: in this case, its pointwise stabilizer under the action of $W^{v}$ is a finite group.

A \textbf{face in $\A$} is a filter $F = F(x, F^{v})$ associated with a point $x \in A$ and a vectorial face $F^{v} \subset A$ as follows: a subset $S$ of $\A$ belongs to $F$ iff it contains an intersection of half-spaces $D(\alpha,k_{\alpha})$ or open half-spaces $\mathring D(\alpha, k_{\alpha})$, with $\alpha \in \Delta$ and $k_{\alpha} \in \Z$, that also contains the local face $F^{\ell}(x,F^{v})$. Note that (local) faces can be ordered as follows: given two such faces $F,F'$ in $\A$, we say that \textbf{$F$ is a face of $F'$} (or \textbf{$F'$ contains $F$}, or~\textbf{$F'$~dominates $F$}) when $F \subset \overline{F'}$.

As explained at the end of Section~\ref{subRootGenSyst}, the action of $W$ on $\A$ permutes the sets of the form $D(\alpha, k)$, where $(\alpha, k)$ runs over $\Delta \times \Z$. In particular, this implies that $W$ permutes enclosures (resp. walls, faces) of $\A$.

The \textbf{dimension} of a face $F$ is the smallest dimension of an affine space generated by some element $S$ of $F$. Such an affine space is unique and is called the \textbf{support} of $F$. A \textbf{local chamber} (or local alcove) is a maximal local face, \textit{i.e} a local face of the form $F^{\ell}(x,\pm w\cdot C^{v}_{f})$ for $x\in \A$ and $w\in W^{v}$. The \textit{fundamental local chamber} is $C_{0}^{+}:=F^{\ell}(0,C^{v}_{f})$. A \textbf{local panel} is a spherical local face which is maximal among faces that are not chambers. Equivalently, a local panel is a spherical local face of dimension $\dim \A - 1$. Analogue definitions of chambers and panels exist, see for instance \cite[\S 1.4]{gaussent2008kac}. Finally, a local face is \textbf{of type $0$} (or: is a \textbf{type $0$ local face}) if its vertex lies in $Y$. We denote by $F_{0}$ the local face $F^{\ell}(0, \A_\mathrm{in})$, where $\A_\mathrm{in} = F^{v}(I) = \bigcap_{i \in I} \ker(\alpha_{i})$. From now on, we will write \textbf{type $0$ face} instead of \textbf{type $0$ local face} to make it shorter.

\begin{rque}
In \cite{rousseau2011masures}, Rousseau defines a notion of \textbf{chimney} that he uses in his axiomatization of masures. We do not define here what chimneys are: we only recall that each sector-germ is a splayed, solid chimney-germ, that each spherical face is contained in a solid chimney and that the action of $W$ on $\A$ permutes the chimneys and preserve their properties (being splayed or solid for instance). For more details about this, see \cite[\S1.10]{rousseau2011masures}.
\end{rque}

\subsubsection{Apartments and masure of type $\A$}
An \textbf{apartment of type $\A$} is a set $A$ together with a non-empty set $\mathrm{Isom}(\A, A)$ of bijections (called \textbf{Weyl-isomorphisms}) such that, given $f_{0} \in \mathrm{Isom}(\A,A)$, the elements of $\mathrm{Isom}(\A,A)$ are exactly the bijections of the form $f_{0} \circ w$ with $w \in W$. All the isomorphisms considered in the sequel will be Weyl-isomorphisms, hence we will only write isomorphism instead of Weyl-isomorphism.

An isomorphism $\phi: A \to A' $ between two apartments of type $\A$ is a bijection such that: $f \in \mathrm{Isom}(\A, A) \iff \phi \circ f \in \mathrm{Isom}(\A, A')$. By construction, all the notions that are preserved under the action of $W$ can be extended to any apartment of type $A$. For instance, we can define sectors, enclosures, faces or chimneys in any apartment $A$ of type $\A$, as well as a pre-order $\leq_{A}$ on $A$.

We can now define the most important object of this section: the masures of type~$\A$.
\begin{defn}
A \textbf{masure of type $\A$} is a set $\mathcal{I}$ endowed with a covering $\mathcal{A}$ of subsets (called \textbf{apartments}) such that the five following axioms hold.

(MA1) Any $A\in \mathcal{A}$ can actually be endowed with a structure of apartment of type~$\A$.

(MA2) If $F$ is a point (resp. a germ of a preordered interval, a generic ray, a solid chimney) contained in an apartment $A$ and if $A'$ is an other apartment containing $F$, then $A\cap A'$ contains $\mathrm{cl}_A(F)$ and there exists an isomorphism from $A$ to $A'$ that fixes $\mathrm{cl}_A(F)$.

(MA3) If $\mathfrak{R}$ is the germ of a splayed chimney and if $F$ is a face or a germ of a solid chimney, then there exists an apartment that contains both $\mathfrak{R}$ and $F$.

(MA4) If two apartments $A$, $A'$ contain both $\mathfrak{R}$ and $F$ as in (MA3), then there exists an isomorphism from $A$ to $A'$ that fixes $\mathrm{cl}_A(\mathfrak{R}\cup F)$.

(MAO) If $x$ and $y$ are two points that are both contained in two apartments $A$ and~$A'$ and such that $x\leq_{A} y$, then the two segments $[x,y]_A$ and $[x,y]_{A'}$ are equal.
\end{defn}
Recall that saying that an apartment contains a germ of a filter means that it contains at least one element of this germ. Similarly, a map fixes a germ when it fixes at least one element of this germ.

From now on, $\I$ will denote a masure of type $\A$. We assume that $\I$ is thick of \textbf{finite thickness}, which means that the number of chambers ($={}$alcoves) containing a given panel is finite and greater or equal to three. We also assume that there exists a group $G$ of automorphisms of $\I$ that acts strongly transitively on $\I$, which implies that all the isomorphisms involved in the axioms above are induced by the action of elements of $G$. We fix an apartment in $\I$ that we identify with $\A$ and call the \textbf{fundamental apartment of $\I$}. As the action of $G$ on $\I$ is strongly transitive, the apartments of $\I$ are exactly the sets $g\cdot\A$ with $g \in G$. Let $N$ be the stabilizer of $\A$ in $G$: it defines a group of affine automorphisms of $\A$, and we assume that this group is $W^{v} \ltimes Y$. As we will see in Section~\ref{subsecMasureAssociee}, these assumptions are not very restrictive for our purpose as they are all satisfied by the masure attached to a split Kac-Moody group $G$ over a non-Archimedean local field (see also \cite{gaussent2008kac} and \cite{rousseau2016groupes}).

\begin{rque}
\label{RemSimplificationAxiomes}
In a recent work \cite[\S5]{hebert2017convexity}, the second author gives a much simpler axiomatic for masures. To simplify the arguments, the reader can assume that~$G$ is an affine Kac-Moody group, in which case the three axioms (MA2), (MA4) and (MAO) can be replaced by the following statement \cite[Th.\,5.38]{hebert2017convexity}: \textbf{for any two apartments~$A$ and~$A'$ in $\I$, we have $A \cap A' = \mathrm{cl}(A \cap A')$ and there exists an isomorphism from~$A$ to~$A'$ that fixes $A \cap A'$.} This partially explains why the affine case is less technical, as it does not require to question the existence of isomorphisms that fix subsets of an intersection of apartments.
\end{rque}

\begin{rque}
\label{RemFaceLocaleFixee}
Let $F$ be a local face of an apartment $A$ and $A'$ be another apartment that contains $F$. Then $F$ is also a local face of $A'$ and there exists an isomorphism from $A$ to $A'$ that fixes $F$. Indeed, if $x$ is the vertex of $F$ and if $J$ is a germ of a pre-ordered segment based at $x$ and contained in $F$, then the enclosure of $J$ contains~$F$ and the application of (MA2) to $J$ now proves the claim.
\end{rque}

\begin{rque}
\label{Fixateurs d'une face locale et de sa face}
Pick $w \in W$ and $\mathcal{F}$ a filter on $\A$ fixed by $w$: then $w$ fixes $\mathrm{cl}(\mathcal{F})$. Combined with the argument used in Remark \ref{RemFaceLocaleFixee}, this proves here that for any vectorial face $F^{v}$ and any base point $x$, the fixer in $G$ of the face $F(x,F^{v})$ is exactly the fixer in $G$ of the corresponding local face $F^{\ell}(x, F^{v})$.
\end{rque}

\begin{rque}
\label{Pre-ordre sur la masure}
As we noticed earlier, each apartment $A$ of $\I$ can be endowed with a pre-order $\leq_{A}$ induced by $\leq_{\A}$. Let $A$ be an apartment of $\I$ and $x,y$ be two points in $A$ such that $x \leq_{A} y$. By \cite[Prop.\,5.4]{rousseau2011masures}, we know that for any apartment $A'$ of~$\I$ that contains both $x$ and $y$, we also have $x \leq_{A'} y$. We hence get a relation $\leq$ on~$\I$ and \cite[Th.\,5.9]{rousseau2011masures} ensures that this relation is a $G$-invariant pre-order on $\I$.
\end{rque}

\subsection{Masure attached to a split Kac-Moody group}
\label{subsecMasureAssociee}
As in \cite{tits1987uniqueness} or in \cite[Chap.\,8]{remy2002groupes}, we consider the group functor $\mathbf{G}$ associated with the root generating system~$\mathcal{S}$ fixed in Section~\ref{subRootGenSyst}. This functor goes from the category of rings to the category of groups and satisfies axioms (KMG1)--(KMG9) of \cite{tits1987uniqueness}. For any field $R$, the group $\mathbf{G}(R)$ is uniquely determined by these axioms \cite[Th.\,1']{tits1987uniqueness}. Furthermore, this functor $\mathbf{G}$ contains a toric functor $\mathbf{T}$ (denoted by $\mathcal{T}$ in \cite{remy2002groupes}) that goes from the category of rings to the category of abelian groups, and two functors~$\mathbf{U}^{\pm}$ going from the category of rings to the category of groups.

In particular, let $\mathcal{K}$ be a non-Archimedean local field. Denote by $\mathcal{O}$ its ring of integers, by $\varpi$ a fixed uniformizer of $\mathcal{O}$, by $q$ the cardinality of the residue class field $\mathcal{O}/\varpi\mathcal{O}$ and set $G:=\mathbf{G}(\mathcal{K})$ (as well as $U^{\pm}:= \mathbf{U}^{\pm}(\mathcal{K})$, $T:= \mathbf{T}(\mathcal{K})$, etc.). For any sign $\varepsilon \in \{+, -\}$ and any root $\alpha \in \Phi^{\varepsilon}$, there is an isomorphism $x_{\alpha}$ from $\mathcal{K}$ to a root group $U_{\alpha}$. For any integer $k \in \Z$, we get a subgroup $U_{\alpha, k}:= x_{\alpha}(\varpi^{k}\mathcal{O})$ of $U_{\alpha}$ (see \cite[\S 3.1]{gaussent2008kac} for precise definitions). Let $\I$ denote the masure attached to $G$ in \cite{rousseau2017almost}: then the following properties hold.
\begin{itemize}
\item The fixer of $\A$ in $G$ is $H:= \mathbf{T}(\mathcal{O})$ \cite[Rem.\,3.2]{gaussent2008kac}.
\item The fixer of $\{0\}$ in $G$ is $K_{s}:=\mathbf{G}(\mathcal{O})$ \cite[Exam.\,3.14]{gaussent2008kac}. Applying (MA2) to~$\{0\}$ and using Remark \ref{Fixateurs d'une face locale et de sa face}, we get that $K_{s}$ is also the fixer in $G$ of the face $F_{0}$.
\item For any pair $(\alpha, k) \in \Phi \times \Z$, the fixer of $D(\alpha, k)$ in $G$ is $H.U_{\alpha,k}$ \cite[Exer.\,4.2.7)]{gaussent2008kac}.
\item For any sign $\varepsilon \in \{+, -\}$, $U^{\varepsilon}$ is the fixer in $G$ of $\varepsilon \infty$ \cite[Exam.\,4.2.4)]{gaussent2008kac}.
\end{itemize}
Moreover, each panel is contained in $q+1$ chambers, hence $\I$ is thick of finite thickness.
\begin{rque}
\label{RemCasReductif}
The group $G$ is reductive iff $W^{v}$ is finite. In this case, $\I$ is the usual Bruhat-Tits building of $G$ and we have $\T = \A$ and $Y^{+} = Y$.
\end{rque}

\paragraph{Acknowledgements} We warmly thank St\'ephane Gaussent for suggesting our col-
laboration, for multiple discussions and for his useful comments on previous versions
of this manuscript. We also thank Nicole Bardy-Panse and Guy Rousseau for dis-
cussions on this topic and for their comments on a previous version of this paper.
Finally, we thank the referee for their valuable comments and suggestions, and for
his/her interesting questions.

\paragraph{Funding} The first author was supported by the ANR grants ANR-14-CE25-0002-01 and ANR-16-CE40-0010-
01, by the GDR TLAG and by a CNRS grant PEPS-JCJC. The second author was supported by
the ANR grant ANR-15-CE40-0012.

\tableofcontents

\section{A topological restriction on parahoric subgroups}
\label{secRestriction topologique}
\subsection{Statement of the result and idea of the proof}
\label{subsecEnonceThmRestriction}
In this section, we will prove that beside the reductive case, it is impossible to endow $G$ with a structure of topological group for which $K_{s}$ or $K_{I}$ are open compact subgroups, where $K_{I}$ denotes the (standard) Iwahori subgroup.\footnote{We recall that $K_{I}$ is the fixer in $G$ of the fundamental local chamber $C_{0}^{+}$.} In fact, we will prove the following result, which is slightly more general.
\begin{thm}
\label{Thm Pas de topologie ouverte compacte pour les fixateurs}
Let $F$ be a type $0$ face of $\A$ and let $K_{F}$ be its fixer in $G$. If $W^{v}$ is infinite, then there is no topology of topological group on $G$ for which $K_{F}$ is open and compact.
\end{thm}
Let $F$ be a type $0$ face of $\A$, \textit{i.e} a local face whose vertex lies in $Y$. First note that, up to replacing $F$ by $h\cdot F$ for some well-chosen $h \in G$, which leads to consider the conjugate of $K_{F}$ under $h$ instead of $K_{F}$, we can assume that $F$ is contained in~$C_{0}^{\pm}$. As the treatment of both cases is similar, we assume that $F$ is contained in~$C_{0}^{+}$. To prove Theorem~\ref{Thm Pas de topologie ouverte compacte pour les fixateurs}, it is enough to prove the existence of $g \in G$ such that $K_{F}/(K_F\cap g\cdot K_F.g^{-1})$ is infinite.

To explain the strategy of proof, we need to introduce some more notations. Let $\alpha \in \Phi^{+}$ and $(w,i) \in W^{v} \times I$ be such that $\alpha = w\cdot \alpha_{i}$. For any $k \in \Z$, set
\[
M^{\alpha}_{k}:=\{t\in \A \mid \alpha(t)= k\},\quad D^{\alpha}_{k}:= \{t\in \A \mid \alpha(t) \leq k\},
\]
and let $K_{\alpha, k}$ be the fixer of $D^{\alpha}_{k}$ in $G$. Furthermore, pick a panel $P^{\alpha}_{k}$ in $M^{\alpha}_{k}$ and a chamber $C^{\alpha}_{k}$ contained in $\mathrm{conv}(M^{\alpha}_{k}, M^{\alpha}_{k+1})$ that dominates $P^{\alpha}_{k}$.

For any $i \in I$, we let $q_{i} + 1$ (resp. $q'_{i} + 1$) be the number of chambers containing~$P^{\alpha_{i}}_{0}$ (resp. $P^{\alpha_{i}}_{1}$). By \cite[Prop.\,2.9]{rousseau2011masures} and \cite[Lem.\,3.2]{hebert2016distances}, $q_{i}$ and $q'_{i}$ do not depend on the choices of the panels $P^{\alpha_{i}}_{0}$ and $P^{\alpha_{i}}_{1}$. (This fact will be explained in the proof of Lemma~\ref{lemOrbite d'un point sous Kj}.) As $\alpha_i(\alpha_i^\vee)=2$, and as there exists an element of $G$ that induces on~$\A$ a~translation of vector $\alpha^{\vee}_{i}$ (because we assumed that the stabilizer $N$ of $\A$ in $G$ induces $W^{v} \ltimes Y$ for group of affine automorphisms), the value of $1 + q_{i}$ (resp. $1 + q'_{i}$) is also the number of chambers that contain $P_{2k}^{\alpha_{i}}$ (resp. $P_{2k+1}^{\alpha_{i}}$) for any integer $k$.

Let us now explain the basic idea of the proof. Pick $g \in G$ such that $g\cdot0 \in C^{v}_{f}$ and set $F':= g\cdot F$: then $K_F/(K_F\cap K_{F'})$ is in bijection with $K_F\cdot F'$. For $\alpha = w\cdot \alpha_{i}$, set $\tilde{K_{\alpha}}:= \bigcup_{k \in \Z} K_{\alpha,k}$: then $\mathbb{T}_\alpha:=\tilde{K_\alpha}\cdot\A$ is a semi-homogeneous extended tree with parameters $q_{i}$ and $q'_{i}$. Using the thickness of $\I$, we can prove that the number $n_{\alpha}$ of walls between $0$ and $g\cdot0$ that are parallel to $\alpha^{-1}(\{0\})$ satisfies $\vert K_{\alpha, 1}g\cdot0 \vert \geq 2^{n_{\alpha}}$, which implies that $\vert K_{F}g\cdot0 \vert \geq 2^{n_{\alpha}}$. As $n_{\alpha}$ can be made arbitrarily large (for~a~suitable choice of $\alpha$) when $W^{v}$ is infinite, this will end the proof.

\subsection{Detailed proof of Theorem \ref{Thm Pas de topologie ouverte compacte pour les fixateurs}}
Fix for now $\alpha = w\cdot \alpha_{i}$. Set $K_{\alpha}:= K_{\alpha, 1}$ and pick a sector-germ $\q$ contained in $D^{\alpha}_{0}$. By (MA3), we know that for any $x \in \I$, there exists an apartment $A_{x}$ that contains both $x$ and $\q$. Axiom (MA4) implies the existence of an isomorphism $\phi_{x}: A_{x} \to \A$ that fixes $\q$, and \cite[\S 2.6]{rousseau2011masures} ensures that $\phi_{x}(x)$ does not depend on the choice of the apartment $A_{x}$ nor on the isomorphism $\phi_{x}$, hence we can denote this element by $\rho_{\q}(x)$. The map $\rho_{q}: \I \to \A$ is the retraction of~$\I$ onto $\A$ centered at $\q$, and its restriction to $\mathbb{T}_{\alpha}$ does not depend on the choice of $\q \in D^{\alpha}_{0}$.

\begin{rque}
\label{Rem restriction retractions aux appartements}
Let $A$ be an apartment containing $\q$ and $\phi = \rho_{q}\vert_{A}$ be the restriction to $A$ of the retraction map $\rho_{q}$. Then $\phi$ is the unique isomorphism of apartments that fixes $A \cap \A$. Indeed, (MA4) implies the existence of an isomorphism of apartments $\psi: A \to \A$ that fixes $\q$. By definition, $\rho_{\q}$ coincides with $\psi$ on $A$, hence $\phi = \psi$ is an isomorphism of apartments, and fixes $\A$, hence $\phi$ fixes $\A \cap A$. If $f: A \to \A$ is an isomorphism of apartments that fixes $A \cap \A$, then $f \circ \phi^{-1}: \A \to \A$ is an isomorphism of affine spaces that fixes $\q$, hence it must be trivial, which proves that $f = \phi$ is unique.
\end{rque}

\begin{lemme}\label{lemKalpha fixe les enclos}
Let $v\in \tilde{K}_\alpha$ and $x\in \A$ be such that $v\cdot x \in \A$. Then $v$ fixes $D^{\alpha}_{\lceil \alpha(x)\rceil }$.
\end{lemme}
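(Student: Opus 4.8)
The plan is to show that the apartments $\A$ and $v.\A$ meet exactly in a half-apartment $D_{m'}$ (or in all of $\A$), that $v$ fixes this intersection pointwise, and then to read off the conclusion by comparing levels. Write $k=\lceil\alpha(x)\rceil$, so that $x\in D_k$. Since $v\in\tilde{K}_\alpha$, there is some $m\in\Z$ with $v\in K_{\alpha,m}$, i.e. $v$ fixes $D_m$ pointwise; if $m\ge k$ we are already done because $D_k\subset D_m$, so assume $m<k$. As $v$ fixes $D_m$ pointwise, $D_m=v.D_m\subset v.\A$, hence $D_m\subset\A\cap v.\A$.

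Next I would use that the intersection of two apartments of the masure is enclosed — in the affine case this is part of the simplified axiomatic recalled in Remark~\ref{rqueDifférence face locale-face}(1), and it applies here because $\A\cap v.\A$ contains a sector (any sector of direction $-C_f^v$ lying inside $D_m$), hence a generic ray. An enclosed subset of $\A$ is of the form $\bigcap_{\beta\in\Delta}D(\beta,k_\beta)$, and one that contains the half-apartment $D_m=D(-\alpha,m)$ must — since $\pm\alpha$ are the only roots of $\Delta$ proportional to $\alpha$ — be equal to $D(-\alpha,m')=D_{m'}$ for some integer $m'\ge m$, or to $\A$. So $\A\cap v.\A=D_{m'}$ for some integer $m'\ge m$, or $\A\cap v.\A=\A$.

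Then I claim $v$ fixes $\A\cap v.\A$ pointwise. By (MA2)/(MA4) (again immediate from Remark~\ref{rqueDifférence face locale-face}(1) in the affine case) there is a Weyl-isomorphism $\psi\colon\A\to v.\A$ fixing $\A\cap v.\A$ pointwise; since $\psi$ and the restriction $v\vert_\A$ both lie in $\mathrm{Isom}(\A,v.\A)$, they differ by an affine Weyl-automorphism $w$ of $\A$ with $\psi=(v\vert_\A)\circ w$. On $D_m$ both $\psi$ and $v\vert_\A$ are the identity, so $w$ fixes $D_m$ pointwise; as $D_m$ affinely spans $\A$ and $w$ is affine, $w=\mathrm{id}_\A$, hence $\psi=v\vert_\A$ and $v$ fixes $\A\cap v.\A$ pointwise. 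Finally $v.x\in\A$ by hypothesis and $v.x\in v.\A$, so $v.x\in\A\cap v.\A$: if $\A\cap v.\A=\A$ then $v$ fixes $\A$, in particular $D_k$; otherwise $v.x\in D_{m'}$ and, $v$ (hence $v^{-1}$) fixing $D_{m'}$ pointwise, $x=v^{-1}.(v.x)=v.x\in D_{m'}$, so $\alpha(x)\le m'$ and thus $k=\lceil\alpha(x)\rceil\le m'$, whence $D_k\subset D_{m'}=\A\cap v.\A$ is fixed by $v$.

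The step I expect to be the real obstacle is the structural input that $\A\cap v.\A$ is enclosed, so that, containing a half-apartment, it is itself a half-apartment or the whole apartment; this is exactly where the masure axioms (or the affine-case simplification of Remark~\ref{rqueDifférence face locale-face}) do the work, the rest being formal manipulation of isomorphisms and levels. As an alternative route to the key intermediate fact $v.x=x$, one could instead use the retraction $\rho_\q$ onto $\A$ centered at a sector-germ $\q\subset D_m$: since $v$ fixes $\q$ one has $\rho_\q\circ v=\rho_\q$, so $v.x=\rho_\q(v.x)=\rho_\q(x)=x$ directly; but one would then still need to propagate the fixing from $D_m\cup\{x\}$ to its enclosure $D_k$, which is again the same apartment-intersection point.
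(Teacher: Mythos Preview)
Your argument is correct and follows the same skeleton as the paper's: show $\A\cap v.\A$ is a half-apartment $D_{m'}$, show $v$ fixes it pointwise, deduce $v.x=x\in D_{m'}$, and conclude $D_{\lceil\alpha(x)\rceil}\subset D_{m'}$. The differences are in how two steps are justified. For the structural fact that $\A\cap v.\A$ is a half-apartment, the paper does not go through the general ``intersections are enclosed'' property you invoke (which, as you note, is only stated here for the affine case); it instead cites Lemma~3.2 of \cite{hebert2016distances}, which directly asserts that two apartments sharing a half-apartment meet in a half-apartment, and this holds without the affine restriction. For the claim that $v$ fixes $\A\cap v.\A$ pointwise, the paper uses Remark~\ref{rqueSur les rétractions restreintes aux apparts} (the retraction $\rho_\q$ restricted to $v.\A$ is the unique isomorphism $v.\A\to\A$ fixing the intersection, so it equals $v^{-1}$ there) --- this is exactly the uniqueness argument you spell out by hand, and your alternative route via $\rho_\q\circ v=\rho_\q$ is essentially what the paper's remark encodes. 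So your proof is sound; to make it work beyond the affine case, replace your appeal to the enclosed-intersection axiom by the direct citation the paper uses.
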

\begin{proof}
Set $A:= v\cdot\A$ and let $k \in \Z$ be such that $A \cap \A$ contains $D^{\alpha}_{k}$. By \cite[Lem.\,3.2]{hebert2016distances}, $A \cap \A$ is a half-apartment,\footnote{Our definition of half-apartments is a bit different from the definition of \cite{hebert2016distances}: what we call half-apartments correspond to the \textbf{true} half-apartments of \cite{hebert2016distances}.} hence there exists an integer $k_{1} \in \Z$ such that $A \cap A = D^{\alpha}_{k_{1}}$. Let $\phi: \A \to A$ be the isomorphism of apartments induced by~$v$: Remark~\ref{Rem restriction retractions aux appartements} ensures that $\phi$ fixes $A \cap \A$, which means that $v$ fixes $A \cap \A$. As~$v\cdot x$ belongs to $A \cap \A$, we obtain that $v\cdot (v\cdot x) = v\cdot x$, hence $v\cdot x = x$ belongs to~$D^{\alpha}_{k_{1}}$. This implies that $D^{\alpha}_{\lceil \alpha(x) \rceil}$ is contained in $D^{\alpha}_{k_{1}} = A \cap \A$, hence is fixed by $v$.
\end{proof}

\begin{lemme}\label{lemOrbite d'un point sous Kj}
Let $x \in \A$ and $M_{x}:= M^{\alpha}_{\lceil \alpha(x) \rceil}$. Then the map $f: K_{\alpha}\cdot x \to K_{\alpha}. M_{x}$ defined by $f(v\cdot x):= v\cdot M_{x}$ is well-defined and bijective.
\end{lemme}
\begin{proof}
Let $v, v' \in K_{\alpha}$ be such that $v\cdot x = v'\cdot x$: by Lemma~\ref{lemKalpha fixe les enclos}, we get that $v^{-1}\cdot v'$ fixes $M_{x}$, hence $f$ is well-defined. Now assume that $v,v' \in K_{\alpha}$ satisfy $v\cdot M_{x} = v'.M_{x}$. Set $u:= v^{-1}v'$ and $A:= u\cdot\A$: then $u\cdot M_{x}= M_{x}$ is contained in $A \cap \A$ hence $u$ fixes $D^{\alpha}_{\lceil \alpha(x) \rceil}$ by Lemma \ref{lemKalpha fixe les enclos}. In particular, we have $u\cdot x = x$, \textit{i.e} $v\cdot x = v'\cdot x$ and $f$ is injective. As $f$ is surjective by definition, the lemma is proved.
\end{proof}

Set $\alpha_{\I}:= \alpha \circ \rho_{\q}$ and, for any integer $k \geq 0$, let $\mathcal{C}^{\alpha}_{k}$ be the set of all chambers~$C$ that dominate some element of $K_{\alpha}.P_{k}$ and satisfy $\alpha_{\I}(C) > k$ (which means that there exists $X \in C$ such that $\alpha_{\I}(x) > k$ for all $x \in X$). Assume also that the chamber $C^{\alpha}_{k}$ chosen in Section \ref{subsecEnonceThmRestriction} is not contained in $D^{\alpha}_{k}$.

\begin{lemme}
\label{lemBijection entre les murs et les chambres}
For any integer $k \geq 0$, the map $g_{k}: K_{\alpha}M^{\alpha}_{k+1} \to \mathcal{C}^{\alpha}_{k}$ sending $v\cdot M^{\alpha}_{k+1}$ onto $v\cdot C^{\alpha}_{k}$ is well-defined and bijective.
\end{lemme}
\begin{proof}
The proof of the first part of the assertion is as in Lemma \ref{lemOrbite d'un point sous Kj}: if $v,v' \in K_{\alpha}$ are such that $v\cdot M^{\alpha}_{k+1} = v'.M^{\alpha}_{k+1}$, then $u:=v^{-1}\cdot v'$ satisfies $u\cdot M^{\alpha}_{k+1} \subset \A$ and Lemma~\ref{lemKalpha fixe les enclos} implies that $u\cdot C^{\alpha}_{k} = C^{\alpha}_{k}$, \textit{i.e} $v\cdot C^{\alpha}_{k} = v'\cdot C^{\alpha}_{k}$. As we moreover have $\alpha_{\I}(v\cdot C^{\alpha}_{k}) = \alpha(C^{\alpha}_{k}) > k$, $v\cdot C^{\alpha}_{k} = v'\cdot C^{\alpha}_{k}$ belongs to $\mathcal{C}^{\alpha}_{k}$ and the map $g_{k}$ is well-defined.

Assume now that $v,v' \in K_{\alpha}$ are such that $v\cdot C^{\alpha}_{k} = v'\cdot C^{\alpha}_{k}$. Set $u:= v^{-1}v'$ and let $X \in C^{\alpha}_{k}$ be an element fixed by $u$. Let $x \in X$ be such that $\alpha(x) > k$: by Lemma~\ref{lemKalpha fixe les enclos}, $u$ fixes $M^{\alpha}_{k+1} \subset D^{\alpha}_{\lceil \alpha(x) \rceil}$, hence $g_{k}$ is injective.

It remains to check that $g_{k}$ is surjective, \textit{i.e} that $\mathcal{C}^{\alpha}_{k} = K_{\alpha}\cdot C^{\alpha}_{k}$. If $C \in \mathcal{C}^{\alpha}_{k}$, then there exists $u \in K_{\alpha}$ such that $C$ dominates $u\cdot P^{\alpha}_{k}$. By \cite[Prop.\,2.9.1)]{rousseau2011masures}, there is an apartment $A$ that contains both $u\cdot D^{\alpha}_{k}$ and $C$, and Remark~\ref{Rem restriction retractions aux appartements} now gives an explicit isomorphism $\phi: A \to \A$ that fixes $A \cap \A$. If $v \in K_{\alpha}$ induces $\phi$, then $\alpha_{\I}(C) = \alpha(v\cdot C)$, hence $\alpha(v\cdot C) > k$, and $v\cdot C \subset \A$ dominates $P^{\alpha}_{k}$, hence $v\cdot C = C^{\alpha}_{k}$, \textit{i.e} $C = v^{-1}\cdot C^{\alpha}_{k}$, which ends the proof.
\end{proof}
Combining Lemmas~\ref{lemOrbite d'un point sous Kj} and \ref{lemBijection entre les murs et les chambres}, we get the following corollary.
\begin{cor}
\label{corOrbite d'un point sous l'action des Ualpha}
For any $x\in \A$, if $k:=\max (1,\lceil \alpha(x) \rceil)$, then $|K_\alpha\cdot x|=q_i' q_i q_i' \cdots$ ($k-1$ factors).
\end{cor}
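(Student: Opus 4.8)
The plan is to compute $|K_\alpha . x|$ by iterating the two bijections established in Lemmas~\ref{lemOrbite d'un point sous Kj} and \ref{lemBijection entre les murs et les chambres}, peeling off one wall at a time and collecting a factor of the appropriate thickness at each step. First I would reduce to the case $\alpha(x)\in\Z$, since $K_\alpha$ preserves the floor function values appearing in the lemmas: replacing $x$ by any point on the wall $M_{\lceil\alpha(x)\rceil}$ does not change $K_\alpha.x$ up to the canonical identifications, because by Lemma~\ref{lemKalpha fixe les enclos} an element of $K_\alpha$ fixes $x$ iff it fixes $M_{\lceil\alpha(x)\rceil}$. So set $m=\lceil\alpha(x)\rceil$; I may then assume $x$ lies on $M_m$. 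If $m\le 1$, then every element of $K_\alpha=K_{\alpha,1}$ fixes $D_1\supset D_m$ hence fixes $M_m$ and thus $x$, giving $|K_\alpha.x|=1$, which matches the formula with $l=\max(1,m)=1$ and an empty product.

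For $m\ge 2$, the core of the argument is a downward induction on $m$ using the chain
\[
K_\alpha.M_m \;\xrightarrow[g^{-1}]{\;\sim\;}\; \mathcal C_{m-1}\;\subset\;\{\text{chambers dominating }K_\alpha.P_{m-1}\},
\]
together with Lemma~\ref{lemOrbite d'un point sous Kj} applied at level $m-1$, which identifies $K_\alpha.P_{m-1}$ (the orbit of the \emph{panel}, or equivalently of a generic point of $M_{m-1}$) with $K_\alpha.M_{m-1}$. The combinatorial input is that a chamber dominating a fixed panel $u.P_{m-1}$ and lying on the positive side (i.e.\ in $\mathcal C_{m-1}$, so with $\alpha_\I>m-1$) is one of the chambers containing that panel other than the unique ``inward'' chamber on the $D_{m-1}$ side; by thickness and the discussion preceding Lemma~\ref{lemKalpha fixe les enclos}, the number of chambers containing $P_{m-1}$ is $1+q_i$ when $m-1$ is even and $1+q_i'$ when $m-1$ is odd, so the fibre of the forgetful map $\mathcal C_{m-1}\to K_\alpha.P_{m-1}$ has cardinality $q_i$ resp.\ $q_i'$. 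Hence
\[
|K_\alpha.M_m| \;=\; |\mathcal C_{m-1}| \;=\; q_\epsilon\cdot |K_\alpha.M_{m-1}|,
\]
where $q_\epsilon=q_i'$ if $m-1$ is odd and $q_\epsilon=q_i$ if $m-1$ is even. Unwinding this recursion from level $m$ down to level $1$ (where $|K_\alpha.M_1|=1$) gives exactly a product of $m-1 = l-1$ factors alternating between $q_i'$ and $q_i$, starting at $q_i'$ (the factor picked up in passing from level $1$ to level $2$, with $m-1=1$ odd), i.e.\ $q_i'q_iq_i'\cdots$ with $l-1$ factors, which is the claimed formula.

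I would then note that the only genuinely delicate point is the identification of the fibres of $\mathcal C_{l}\to K_\alpha.P_l$ with the set of ``outgoing'' chambers at a panel, and that this count is independent of the chosen panel in $M_l$; this is precisely what the remark before Lemma~\ref{lemKalpha fixe les enclos} grants us, using Proposition~2.9 of \cite{rousseau2011masures} and Lemma~3.2 of \cite{hebert2016distances} together with the existence of an element of $G$ inducing the translation by $\alpha_i^\vee$, which identifies the local structure at $M_l$ with that at $M_{l+2}$. Everything else is a mechanical composition of the two stated bijections; the main obstacle is purely bookkeeping, namely matching the parity of each level to the correct thickness parameter and checking the base case and the degenerate case $\lceil\alpha(x)\rceil\le 1$ so that the ``$l-1$ factors'' and the leading $q_i'$ come out right.
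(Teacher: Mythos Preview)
Your proof is correct and follows exactly the approach the paper intends: the paper's own proof is simply the one-line statement ``By combining Lemma~\ref{lemOrbite d'un point sous Kj} and Lemma~\ref{lemBijection entre les murs et les chambres}, we get the following corollary,'' and what you have written is precisely the unpacking of that combination, iterating the two bijections down from level $m=\lceil\alpha(x)\rceil$ to level $1$ and collecting the thickness factor at each step. Your identification of the fibre size of $\mathcal{C}_{m-1}\to K_\alpha.P_{m-1}$ as $q_\epsilon$ (via the unique inward chamber) and your handling of the parity and the base case $m\le 1$ are both correct and are exactly what the paper leaves implicit.
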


Until the end of this section, we assume that $W^{v}$ is infinite.
\begin{lemme}
\label{lemNonCommensurabiliteParahoriques}
Let $F$ be a type $0$ face of $\A$. If $W^{v}$ is infinite, then there exists $g \in G$ such that $K_{F}/K_{F} \cap K_{g\cdot F}$ is infinite.
\end{lemme}
\begin{proof}
Let $g \in G$ be such that $a:= g\cdot0$ belongs to $C^{v}_{f}$ and set $F':= g\cdot F$. Let $(\alpha_{n})_{n \geq 0}$ be an injective sequence of positive real roots (\textit{i.e} $\alpha_{n} \in \Phi^{+}$ for any non-negative integer $n$). As we have $K_{\alpha_{n}} \subset K_{F}$, hence $|K_F\cdot F'|\geq |K_{\alpha_{n}}.a|$, for all $n \geq 0$, it is enough to check (by Corollary~\ref{corOrbite d'un point sous l'action des Ualpha} and thickness of $\I$) that $\alpha_{n}(a) \to +\infty$ as $n \to +\infty$.

By definition, any $\alpha_{n}$ can be written as $ \sum_{i \in I} \lambda_{n,i} \alpha_{i}$ with $\lambda_{n,i} \in \Z_{+}$ for all $(n,i) \in \Z_{+} \times I$. The injectivity of the sequence $(\alpha_{n})_{n \geq 0}$ implies that $ \sum_{i\in I} \lambda_{n,i}\to +\infty$ as $n$ goes to $+\infty$, hence $ \lim_{n \to +\infty}\alpha_{n}(a) = +\infty$ as required.
\end{proof}

\begin{cor}
\label{CorParahoriques non ouverts compacts}
Let $F$ be a type $0$ face of $\I$. If $W^{v}$ is infinite, then there is no topology of topological group on $G$ for which $K_{F}$ is open and compact.
\end{cor}
\begin{proof}
If there was such a topology, then for any $g \in G$, $K_{F}$ and $K_{g\cdot F} = gK_{F}g^{-1}$ would be open and compact in $G$, hence $K_{F} \cap K_{g\cdot F}$ would have the same properties. This would imply the finiteness of the quotient $K_{F}/ K_{F} \cap K_{g\cdot F}$ for any $g \in G$, which contradicts Lemma~\ref{lemNonCommensurabiliteParahoriques}.
\end{proof}

Considering $F = F_{0}$ (resp. $F = C^{+}_{0}$), we obtain that $K_{s}$ (resp. $K_{I}$) cannot be open and compact in $G$ when $W^{v}$ is infinite, \textit{i.e} when $G$ is not reductive. This shows how different reductive groups and (non-reductive) Kac-Moody groups are from this point of view. 

\section{The completed Iwahori-Hecke algebra}
\label{secAlgebre d'IH completee}
\subsection{Definition of the usual Iwahori-Hecke algebra}
\label{subsecIH algebre}
Let us first recall briefly the construction of the Iwahori-Hecke algebra via its Bernstein-Lusztig presentation, as done in \cite[\S 6.6]{bardy2016iwahori}. Note that this definition requires some restrictions on the possible choices for the ring of scalars; nevertheless, choosing $\C$ or $\Z[\sqrt{q}, \sqrt{q}^{-1}]$ is allowed when $G$ is a split Kac-Moody group over $\mathcal{K}$. Another definition of the Iwahori-Hecke algebra (as an algebra of functions on pairs of type $0$ chambers in a masure) is given in \cite[Def.\,2.5]{bardy2016iwahori} and allows more flexibility in the choice of scalars. This will be recalled in Section~\ref{secAlgebres de Hecke generales}.

Let $\mathcal{R}_{1}:= \Z[(\sigma_{i}, \sigma'_{i})_{i \in I}]$, where $(\sigma_i)_{i\in I}$ and $(\sigma'_i)_{i\in I}$ denote indeterminates that satisfy the following relations:
\begin{itemize}
\item if $\alpha_{i}(Y) = \Z$, then $\sigma_{i} = \sigma'_{i}$;
\item if $(i,j) \in I^{2}$ are such that $r_{i}$ and $r_{j}$ are conjugate (\textit{i.e} such that $\alpha_{i}(\alpha^{\vee}_{j}) = \alpha_{j}(\alpha^{\vee}_{i}) = -1$), then $\sigma_{i}=\sigma_{j}=\sigma'_{i}=\sigma'_{j}$.
\end{itemize}

To define the Iwahori-Hecke algebra $\HH$ associated with $\A$ and $(\sigma_{i}, \sigma'_{i})_{i \in I}$, we first introduce the Bernstein-Lusztig-Hecke algebra. Let $\BLHH$ be the free $\RR_{1}$-module with basis $(Z^\lambda H_w)_{\lambda\in Y,w\in W^v}$. For short, set $H_{i}:= H_{r_{i}}$ for $i \in I$, as well as $H_{w} = Z^{0}H_{w}$ for $w \in W^{v}$ and $Z^{\lambda} = Z^{\lambda}H_{1}$ for $\lambda \in Y$. The \textbf{Bernstein-Lusztig-Hecke algebra} $\BLHH$ is the module $\BLHH$ equipped with the unique product $*$ that turns it into an associative algebra and satisfies the following relations (known as \textbf{Bernstein-Lusztig relations}):
\begin{align*}
\tag{BL1}
&\forall (\lambda, w) \in Y \times W^{v},\quad Z^{\lambda} * H_{w} = Z^{\lambda}H_{w};\\
\tag{BL2}
&\forall i \in I,\, \forall w \in W^{v},\
H_{i}*H_{w}=\begin{cases}
H_{r_{i}w} &\hspace*{-2mm}\text{if }\ell(r_{i}w)=\ell(w)+1,\\
(\sigma_{i}-\sigma_{i}^{-1}) H_{w}+H_{r_{i} w} &\hspace*{-2mm}\text{if }\ell(r_iw)=\ell(w)-1;
\end{cases}\\
\tag{BL3}
&\forall (\lambda, \mu) \in Y^{2},\quad Z^{\lambda} * Z^{\mu} = Z^{\lambda + \mu};\\
\tag{BL4}
&\forall \lambda \in Y, \, \forall i \in I,\quad H_{i}*Z^{\lambda}-Z^{r_{i}(\lambda)}*H_{i} = b(\sigma_{i},\sigma'_{i};Z^{-\alpha^{\vee}_{i}})(Z^{\lambda}-Z^{r_{i}(\lambda)}),\\
&\text{with}\quad b(t,u;z)= \frac{(t-t^{-1})+(u-u^{-1}) z}{1-z^2}.
\end{align*}
The existence and unicity of such a product $*$ comes from \cite[Th.\,6.2]{bardy2016iwahori}. Following \cite[\S 6.6]{bardy2016iwahori}, the \textbf{Iwahori-Hecke algebra} $\HH_{\RR_{1}}$ associated with $\A$ and $(\sigma_{i}, \sigma'_{i})_{i \in I}$ is now defined as the $\RR_{1}$-submodule of $\BLHH$ spanned by $(Z^{\lambda} H_{w})_{\lambda\in Y^{+}, w\in W^{v}}$ (recall that $Y^{+} = Y \cap \T$ with $\T$ being the Tits cone). Note that for $G$ reductive, we recover the usual Iwahori-Hecke algebra of $G$.

\begin{rque}
\label{remExtension des scalaires}
This construction is compatible with extension of scalars. Let indeed~$\RR$ be a ring that contains $\Z$ and $\phi: \RR_{1} \to \RR$ be a ring homomorphism such that $\phi(\sigma_{i})$ and $\phi(\sigma'_{i})$ are invertible in $\RR$ for all $i \in I$: then the Iwahori-Hecke algebra associated with $\A$ and $(\phi(\sigma_{i}), \phi(\sigma'_{i}))_{i \in I}$ over $\RR$ is $ \HH_{\RR} = \RR \otimes_{\RR_{1}} \HH_{\RR_{1}}$.
\end{rque}
\begin{rque}
\label{remIH algebre dans le cas KM deploye}
When $G$ is a split Kac-Moody group over $\mathcal{K}$, we can (and will) set $\sigma_{i} = \sigma'_{i} = \sqrt{q}$ for all $i \in I$ and $\RR = \Z[\sqrt{q}{}^{\pm1}]$. The corresponding Iwahori-Hecke algebra $\HH_{\RR}$ will simply be denoted by $\HH$.
\end{rque}

\subsection{Almost finite sets in $Y$ and $Y^{+}$}
We fix a pair $(\RR, \phi)$ as in Remarks \ref{remExtension des scalaires} and~\ref{remIH algebre dans le cas KM deploye}. In this section, we introduce a notion of almost finite sets in $Y$ and $Y^{+}$, that will be used to define the Looijenga algebra $\RR\llbracket Y\rrbracket$ in the next section and the completed Iwahori-Hecke algebra $\hat{\HH} = \hat{\HH}_{\RR}$ in Section~\ref{subecCompleted IH}.

\subsubsection{Definition of almost finite sets}
\begin{defn}
\label{Def partie presque finie}
A subset $E$ of $Y$ is \textbf{almost finite (in $Y$)} if there is a finite set $J \subset Y$ such that: $\forall \lambda \in E, \ \exists \nu \in j \mid \lambda \leq_{Q^{\vee}} \nu$.
\end{defn}
Replacing $Y$ by $Y^{+}$ in the previous definition, we have the definition of almost finite sets in $Y^{+}$. Nevertheless, the following lemma (applied to $F = Y^{+}$) justifies why we do not set this other definition apart: it shows indeed that almost finiteness for $Y^{+}$ can already be seen in $Y$, which explains why we will just write \textbf{almost finite sets} with no more specification.

\begin{lemme}
\label{lem intersection ensembles presque finis}
Let $E \subset Y$ be an almost finite set. For any subset $F$ of $Y$, there exists a finite set $J \subset F$ such that $F \cap E \subset \bigcup_{j \in J} (j - Q^{\vee}_{+})$.
\end{lemme}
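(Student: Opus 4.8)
The plan is to reduce the statement to the definition of an almost finite set. Since $E$ is almost finite, by definition there is a finite set $J_0 \subset Y$ such that every $\lambda \in E$ satisfies $\lambda \leq_{Q^\vee} \nu$ for some $\nu \in J_0$; equivalently, $E \subset \bigcup_{\nu \in J_0}(\nu - Q^\vee_+)$. The issue is that $J_0$ need not be contained in $F$, so one cannot take $J = J_0$. First I would restrict attention to those $\nu \in J_0$ that are actually ``useful'', i.e.\ those for which $(\nu - Q^\vee_+)\cap F \cap E \neq \emptyset$, and for each such $\nu$ I would like to replace it by an element of $F$ lying above the relevant part of $E$.

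The key observation is the following: for a fixed $\nu \in J_0$, the set $F \cap (\nu - Q^\vee_+)$, when nonempty, has only finitely many $\leq_{Q^\vee}$-maximal elements. Indeed, $\{\mu \in Y \mid \mu \leq_{Q^\vee} \nu\}$ is in bijection (via $\mu \mapsto \nu - \mu$) with the set of $p \in Q^\vee_+ = \bigoplus_{i\in I}\N\alpha_i^\vee$ with $\nu - p \in Y$; writing $p = \sum_i n_i \alpha_i^\vee$, the condition $p \leq_{Q^\vee}$-below a fixed bound forces each $n_i$ to range over a finite set (since the $\alpha_i^\vee$ are free in $Y$ and $p$ has nonnegative coordinates bounded above componentwise by those of $\nu$ minus a minimal element — more simply, $\{p \in Q^\vee_+ \mid \nu - p \in \nu' - Q^\vee_+\}$ for fixed $\nu'$ is finite because it sits inside a product of finite intervals in the $n_i$). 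Hence $F \cap (\nu - Q^\vee_+)$ is contained in a \emph{finite} set, and in particular is finite. Let $J_\nu$ be this finite set (or just all of $F \cap (\nu - Q^\vee_+)$, which is finite), and set $J = \bigcup_{\nu \in J_0} J_\nu \subset F$, a finite set.

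It remains to check $F \cap E \subset \bigcup_{j \in J} (j - Q^\vee_+)$. Let $\lambda \in F \cap E$. By almost finiteness there is $\nu \in J_0$ with $\lambda \leq_{Q^\vee} \nu$, i.e.\ $\lambda \in \nu - Q^\vee_+$. Then $\lambda \in F \cap (\nu - Q^\vee_+) \subseteq J_\nu \subseteq J$, so trivially $\lambda \in \lambda - Q^\vee_+ \subseteq \bigcup_{j \in J}(j - Q^\vee_+)$. This proves the claim. The only genuinely substantive point — the ``main obstacle'' — is the finiteness of $Q^\vee_+ \cap (\text{something} - Q^\vee_+)$, i.e.\ that a $\leq_{Q^\vee}$-bounded-above subset of $Q^\vee_+$ is finite; this follows because $Q^\vee_+$ is the set of $\N$-combinations of the free family $(\alpha_i^\vee)_{i\in I}$, so membership in $\mu - Q^\vee_+$ bounds each coordinate from above, and nonnegativity bounds it from below. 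Everything else is bookkeeping.
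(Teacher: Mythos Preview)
Your proof has a genuine error: the claim that $F \cap (\nu - Q^\vee_+)$ is \emph{finite} is false. Take $F = Y$; then $F \cap (\nu - Q^\vee_+) = \nu - Q^\vee_+$, which is in bijection with $Q^\vee_+ \cong \N^I$ and hence infinite whenever $I \neq \emptyset$. Your argument about $\{p \in Q^\vee_+ \mid \nu - p \in \nu' - Q^\vee_+\}$ being finite is correct, but it requires a \emph{lower} bound $\nu'$ that you never produce; nothing in the setup bounds elements of $F$ from below. Consequently your $J = \bigcup_\nu J_\nu$ need not be finite, and the final verification (which amounts to $F \cap E \subset J$) collapses.

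The salvageable part is your opening sentence: the set $F \cap (\nu - Q^\vee_+)$ has only finitely many $\leq_{Q^\vee}$-maximal elements. This is exactly Dickson's lemma (antichains in $\N^I$ are finite), and it is the heart of the matter --- but you never actually prove it, and then you overshoot by concluding finiteness of the whole set. If you take $J_\nu$ to be just those maximal elements, you must also check that every element of $F \cap (\nu - Q^\vee_+)$ lies below one of them; this is true because $(\N^I,\leq)$ is a well-partial-order, so every nonempty subset is covered by the down-sets of its finitely many minimal elements (equivalently, after reversing the order, by the cones below its maximal elements). With those two fixes your argument goes through. The paper does essentially this, but more directly: it takes $J$ to be the set of $\leq_{Q^\vee}$-maximal elements of $F\cap E$ itself (after reducing to $E \subset y - Q^\vee_+$), and then invokes the antichain-finiteness lemma to conclude $J$ is finite.
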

\begin{proof}
As $E$ is almost finite, we can assume that $E$ is contained in $y - Q^{\vee}_{+}$ for some well-chosen $y \in Y$. Let $J$ be the set of all elements in $F \cap E$ that are maximal in $F \cap E$ for the pre-order $\leq_{Q^{\vee}}$. As $E$ is almost finite, we already have: $\forall x \in E, \ \exists \nu \in\nobreak K \mid x \leq_{Q^{\vee}} \nu$. Let us prove that $J$ is finite, which will conclude the proof. To~do this, we identify $Q^\vee$ with $\Z^{I}$ and set $J':= \{u \in Q^\vee \mid y - u \in J\}$. We define a comparison relation $\prec$ on $Q^{\vee}$ as follows: for all $x = (x_{i})_{i \in I}$ and $x' = (x'_{i})_{i \in I}$, we write $x \prec x'$ when $x_{i} \leq x'_{i}$ (in $\Z$) for all $i \in I$ and $x \not= x'$. By definition of $J$, elements of $J'$ are pairwise non comparable, hence \cite[Lem.\,2.2]{hebertGK} implies that $J'$ is finite, which requires that $J$ itself is finite and completes the proof.
\end{proof}

\subsubsection{Examples of almost finite sets in $Y^{+}$}
\subsubsection*{In the affine case}
Suppose that $\A$ is associated with an affine Kac-Moody matrix~$A$. By \cite[Rem.\,5.10]{rousseau2011masures}, we know that $\T = \delta^{-1}(\R^{*}_{+}) \sqcup \A_\mathrm{in}$, where $\delta$ denotes the smallest positive imaginary root of $A$, and that $\delta$ is $W^{v}$-invariant, thus $\delta(\alpha^{\vee}_{i}) >0$ for all $i \in I$. Therefore, an almost finite set of $Y^{+}$ is a set $E$ contained in $ \bigcup_{i = 1}^{k} (y_{i} - Q^{\vee}_{+})$ for some integer $k \geq 1$ and some $y_{1}, \ldots, y_{k} \in Y^{+}$.

\subsubsection*{In the indefinite case}
Unlike the finite or the affine case, when $\A$ is associated with an indefinite Kac-Moody matrix $A$, we have: $\forall y \in Y, \ y - Q^{\vee}_{+} \not\subseteq Y^{+}$. Indeed, due to the proof and the statement of \cite[Lem.\,2.9]{gaussent2014spherical}, there exists a linear form $\delta: \A \to \R$ such that $\delta(\T) \geq 0$ and $\delta(\alpha^{\vee}_{i}) < 0$ for all $i \in I$. Consequently, if $y \in Y$ and $i \in I$, then~$\delta(y - n\alpha^{\vee}_{i}) < 0$ for $n$ large enough, hence $y - Q^{\vee}_{+}$ is not contained in $Y^{+}$. However, $Y^{+}$ may be contained in $Q^{\vee}_{-}$, as stated by the following lemma.
\begin{lemme}
\label{lem equivalence inclusions cas indefini}
We have $Y^{+} \subset Q^{\vee}_{-}$ iff we have $Y^{++} \subset Q^{\vee}_{-}$.
\end{lemme}
\begin{proof}
As $Y^{++}$ is contained in $Y^{+}$, the direct implication is obvious. Assume conversely that $Y^{++} \subset Q^{\vee}_{-}$. By Lemma~\ref{lemLemme 2.4 a) de GR14}, we know that $\lambda \leq_{Q^{\vee}} \lambda^{++}$ for any $\lambda \in Y^{+}$, with $\lambda^{++}$ in $Y^{++}$, hence we must have $Y^{+} \subset Q^{\vee}_{-}$ and the proof is complete.
\end{proof}

We say that $\A$ is the \textbf{essential realization of the Kac-Moody matrix $A$} when $\A_\mathrm{in} =\nobreak \{0\}$, or equivalently when $\dim_{\R}\A$ equals the size of the matrix $A$. If $\A$ is the essential realization of an indefinite matrix $A = \begin{pmatrix}
2& a_{1,2}\\ a_{2,1} & 2
\end{pmatrix}$ of size $2$, with $a_{1,2}$ and~$a_{2,1}$ negative integers, then $\A = \R\alpha^{\vee}_{1} \oplus \R\alpha^{\vee}_{2}$. For any integers $\lambda$ and $\mu$ of opposite sign (\textit{i.e} such that $\lambda\mu < 0$), we have $(2\lambda +a_{1,2} \mu)(a_{2,1}\lambda+2\mu)<0$, hence $Y^{++}$ is contained in $Q^{\vee}_{+} \cup Q^{\vee}_{-}$. By \cite[Th.\,4.3]{kac1994infinite}, we get that $Q^{\vee}_{+} \cap Y^{++} = \{0\}$, hence $Y^{++}$ is contained in $Q^{\vee}_{-}$, and Lemma \ref{lem equivalence inclusions cas indefini} implies that $Y^{+}$ is also contained in $Q^{\vee}_{-}$. Consequently, every subset of $Y^{+}$ is almost finite.

Note that this conclusion does not always hold when $A$ is of size $n \geq 3$. Indeed, assume for instance that $\A$ is the essential realization of the matrix
\[ A = \begin{pmatrix}
2 & 0 & -2\\ 0 & 2 & 0 \\ -5 & 0 & 2
\end{pmatrix}.\]
Then $-2\alpha_1^\vee+\alpha_2^\vee-\alpha_3^\vee$ is in $Y^{++}$ but not in $Q^{\vee}_{-}$.

\subsection{The Looijenga algebra $\RR\llbracket Y\rrbracket$}
We keep the previous notations. The next definition follows the definition of the algebra $A$ given in \cite[\S 4]{looijenga1980invariant}.
\begin{defn}
\label{DefLooijenga algebra}
Let $(e_{\lambda})_{\lambda \in Y}$ be a family of symbols that satisfy $e^{\lambda} e^{\mu} =e^{\lambda+\mu}$ for all $\lambda,\mu \in Y$. The \textbf{Looijenga algebra} $\RR\llbracket Y\rrbracket$ of $Y$ over $\RR$ is defined as the set of formal series $ \sum_{\lambda\in Y} a_{\lambda} e^{\lambda}$ with $(a_{\lambda})_{\lambda \in Y} \in \RR^{Y}$ having almost finite support.
\end{defn}

For any element $\lambda \in Y$, let $\pi_{\lambda}: \RR\llbracket Y\rrbracket \to \RR$ be the ``$\lambda$-the coordinate map'' defined by $ \pi_{\lambda}\bigl(\sum_{\mu \in Y} a_{\mu}e^{\mu}\bigr):= a_{\lambda}$. Define $\RR\llbracket Y^{+}\rrbracket$ and $\RR\llbracket Y\rrbracket^{W^{v}}$ as follows:
\[
\begin{cases}
\RR\llbracket Y^{+}\rrbracket:= \left\{a \in \RR\llbracket Y\rrbracket \mid \forall y \in Y \setminus Y^{+}, \pi_{\lambda}(a) = 0\right\} \\[3pt]
\RR\llbracket Y\rrbracket^{W^{v}}:= \left\{ a \in \RR\llbracket Y\rrbracket \mid \forall (\lambda, w) \in Y \times W^{v}, \ \pi_{\lambda}(w\cdot a) = \pi_{\lambda}(a) \right\}.
\end{cases}
\]
One can check that $\RR\llbracket Y^{+}\rrbracket$ and $\RR\llbracket Y\rrbracket^{W^{v}}$ are $\RR$-subalgebras of $\RR\llbracket Y\rrbracket$.
\begin{defn}
\label{def Familles sommables Looijenga algebra}
A family $(a_{j})_{j \in J} \in (\RR\llbracket Y\rrbracket)^{J}$ is \textbf{summable} if:
\begin{itemize}
\item for all $\lambda \in Y$, $\{ j \in J \mid \pi_{\lambda}(a_{j})\not= 0\}$ is finite;
\item the set $\{\lambda \in Y \mid \exists j \in J,\, \pi_{\lambda}(a_{j}) \not= 0\}$ is almost finite.
\end{itemize}
\end{defn}
Given a summable family $(a_{j})_{j \in J} \in (\RR\llbracket Y\rrbracket)^{J}$, we set $ \sum_{j \in J} a_{j}:= \sum_{\lambda \in Y} b_{\lambda} e^{\lambda}$, with $b_{\lambda}:= \sum_{j \in J} \pi_{\lambda}(a_{j})$ for any $\lambda \in Y$. For $\lambda \in Y^{++}$, set $E(\lambda):= \sum_{\mu \in W^{v}\cdot\lambda} e^{\mu} \in \RR\llbracket Y\rrbracket$. (Note that this is well-defined by Lemma~\ref{lemLemme 2.4 a) de GR14}.) Finally, for any $\lambda \in \mathcal{T}$, let $\lambda^{++}$ be the unique element in $\overline{C^{v}_{f}}$ that has the same $W^{v}$-orbit as $\lambda$ (\textit{i.e} such that $W^{v}\cdot\lambda = W^{v}\cdot\lambda^{++}$).

\begin{lemme}
\label{lemSupport inclus dans Y++}
Let $y \in Y$. Then $W^{v}\cdot y$ is upper-bounded (for $\leq_{Q^{\vee}}$) iff $y$ belongs to~$Y^{+}$.
\end{lemme}
\begin{proof}
If $y$ belongs to $Y^{+}$, then Lemma~\ref{lemLemme 2.4 a) de GR14} implies that $W^{v}\cdot y$ is upper-bounded by $y^{++}$. Assume conversely that $y \in Y$ is such that $W^{v}\cdot y$ is upper-bounded for $\leq_{Q^{\vee}}$ and let $x \in W^{v}\cdot y$ be a maximal element. For any $i \in I$, we have $r_{i}(x) \leq_{Q^{\vee}} x$, hence $\alpha_{i}(x) \geq 0$, which proves that $x$ belongs to $\overline{C^{v}_{f}}$ and implies that $y$ is in $Y^{+}$.
\end{proof}

Denote by $\mathrm{AF}_{\RR}(Y^{++})$ the set of elements of $\RR^{Y^{++}}$ having almost finite support.
\begin{prop}
\label{propR[[Y]] Wv inclus dans R[[ Y+]]}
The map $E: \mathrm{AF}_{\RR}(Y^{++}) \to \RR\llbracket Y\rrbracket^{W^{v}}$ that sends $a \in \mathrm{AF}_{\RR}(Y^{++})$ to $ \sum_{\lambda \in Y^{++}} \pi_{\lambda}(a) E(\lambda)$ is well-defined and bijective. In particular, we have $\RR\llbracket Y\rrbracket^{W^{v}} \subset \RR\llbracket Y^{+}\rrbracket$.

\end{prop}
\begin{proof}
Let $a = (a_{\lambda})_{\lambda \in Y^{++}}$ be an element of $\mathrm{AF}_{\RR}(Y^{++})$. As $a$ has almost finite support, there exists a finite set $J \subset Y$ such that: $\forall \lambda \in \supp(a), \ \exists \nu \in J \mid \lambda \leq_{Q^{\vee}} \nu$. We start by proving that $(a_{\lambda}E(\lambda))_{\lambda \in Y^{++}}$ is summable. Let $\nu \in Y$ and set
\[
F_{\nu}:= \{ \lambda \in Y^{++} \mid \pi_{\nu}(a_{\lambda}E(\lambda)) \neq 0\}.
\]
For any $\lambda \in F_{\nu}$, $\nu$ belongs to $W^{v}\cdot\lambda$, hence Lemma~\ref{lemLemme 2.4 a) de GR14} implies that $\nu \leq_{Q^{\vee}} \lambda$. As there exists moreover some $j \in J$ such that $\lambda \leq_{Q^{\vee}} j$, we get the finiteness of $F_{\nu}$. Now let $F:= \bigcup_{\nu \in Y} F_{\nu}$. We just saw that any element of $F$ is dominated (for $\leq_{Q^{\vee}}$) by some element of $J$, hence $F$ is by definition almost finite, and $(a_{\lambda}E(\lambda))_{\lambda \in Y^{++}}$ is summable.

By construction, $E(\lambda)$ is in $\RR\llbracket Y\rrbracket^{W^{v}}$ for any $\lambda \in Y^{++}$, hence $ \sum_{\lambda \in Y^{++}} a_{\lambda}E(\lambda)$ is in $\RR\llbracket Y\rrbracket^{W^{v}}$ too and $E$ is well-defined. Now assume that $a \in \mathrm{AF}_{\RR}(Y^{++})$ is non-zero and let $\nu$ be maximal (for $\leq_{Q^{\vee}}$) among the elements $\lambda$ of $Y^{++}$ such that $\pi_{\lambda}(a) \not= 0$. Then $\pi_{\nu}(E(a)) = \pi_{\nu}(a) \not= 0$, hence $E(a)$ is non-zero and $E$ is injective. To prove $E$ is surjective, let $u = \sum_{\lambda \in Y} u_{\lambda} e^{\lambda}$ be any element of $\RR\llbracket Y\rrbracket^{W^{v}}$ and let $\lambda \in \supp(u)$. As $\supp(u)$ is almost finite and $W^v$-invariant, $W^{v}\cdot\lambda$ is upper-bounded, hence Lemma~\ref{lemSupport inclus dans Y++} implies that $\lambda$ belongs to $Y^{+}$. This proves that $\supp(u)$ is contained in $Y^{+}$, and that $u = E((\pi_{\lambda}(u))_{\lambda \in \supp(u)^{++}})$ is in the image of $E$, which completes the proof.
\end{proof}

\begin{rque}
Lemma \ref{lemSupport inclus dans Y++} and Proposition \ref{propR[[Y]] Wv inclus dans R[[ Y+]]} are not explicitly stated in \cite{gaussent2014spherical}, but their proof is basically contained in the proof of \cite[Th.\,5.4]{gaussent2014spherical}.
\end{rque}

\subsection{The completed Iwahori-Hecke algebra $\hat{\HH}$}
\label{subecCompleted IH}
In this subsection, we define an $\RR$-algebra $\hat{\HH}$ as a ``completion'' of the usual Iwahori-Hecke algebra, what justifies the name of \textbf{completed Iwahori-Hecke algebra} given to $\hat{\HH}$. In the next section, we will compute the centers of $\HH$ and of $\hat{\HH}$, and recover the reasons that motivated the introduction of $\hat{\HH}$ in this context.

Endow $W^{v}$ with its Bruhat order $\leq$ and, for any $w \in W^{v}$, set
\[
[1,w]:= \{u \in W^{v} \mid u \leq w\}.
\]
This notation makes sense as $1 \leq w$ for all $w \in W^{v}$. Let $\mathcal{B}:= \prod_{\lambda \in Y^{+}, \ w \in W^{v}} \RR$.
\begin{defn}
\label{def differents supports}
For any $a = (a_{\lambda, w})_{(\lambda, w) \in Y^{+} \times W^{v}}$ in $\mathcal{B}$, the \textbf{support of $a$} is the set
\[ \supp(a):= \left\{ (\lambda,w) \in Y^{+} \times W^{v} \mid a_{\lambda, w} \not= 0 \right\}.
\]
The \textbf{support of $a$ along $Y$} is the set
\[ \supp_{Y}(a):= \left\{ \lambda \in Y^{+} \mid \exists w \in W^{v}, a_{\lambda, w} \not= 0 \right\}
\]
and the \textbf{support of $a$ along $W^{v}$} is the set
\[ \supp_{W^{v}}(a):= \left\{ w \in W^{v} \mid \exists \lambda \in Y^{+}, a_{\lambda, w} \not= 0 \right\}.\]
\end{defn}

\begin{defn}
\label{def presque fini dans le produit Y+ times Wv}
A subset $Z$ of $Y^{+} \times W^{v}$ is \textbf{almost finite} if $$\{w \in W^{v} \mid \exists \lambda \in Y^{+}, (\lambda, w) \in Z\}$$ is finite and if, for all $w \in W^{v}$, the set $\{\lambda \in Y^{+} \ \vert (\lambda, w) \in Z\}$ is almost finite (in the sense of Definition \ref{Def partie presque finie}).
\end{defn}

Let $\hat{\HH}$ be the set of all elements in $\mathcal{B}$ with almost finite support. An element $(a_{\lambda, w})_{(\lambda, w) \in Y^{+} \times W^{v}}$ of $\hat{\HH}$ will also be written as $ \sum_{(\lambda, w) \in Y^{+} \times W^{v}} a_{\lambda,w} Z^{\lambda}H_{w}$. Any pair $(\lambda, w) \in Y^{+} \times W^{v}$ defines a projection map $\pi_{\lambda, w}: \hat{\HH} \to \RR$ defined by $ \pi_{\lambda, w}\bigl(\sum_{(\nu, u) \in Y^{+} \times W^{v}} a_{\nu,u} Z^{\nu}H_{u} \bigr):= a_{\lambda, w}$.

To extend the product $*$ to $\hat{\HH}$, we start by proving that for any elements
\[
\sum_{(\lambda, w) \in Y^{+} \times W^{v}} a_{\lambda,w} Z^{\lambda}H_{w}\quad\text{and}\quad \sum_{(\lambda, w) \in Y^{+} \times W^{v}} b_{\lambda,w} Z^{\lambda}H_{w}
\]
of $\hat{\HH}$, and any pair $(\mu, v) \in Y^{+} \times W^{v}$, the sum
\[\sum_{(\lambda,w), (\lambda',w')\in Y^+\times W^v}\pi_{\mu,v} (a_{\lambda,w} b_{\lambda',w'} Z^\lambda H_w * Z^{\lambda'} H_{w'})\]
is a finite sum, \textit{i.e} that only finitely many terms $\pi_{\mu,v} (a_{\lambda,w} b_{\lambda',w'} Z^\lambda H_w * Z^{\lambda'} H_{w'})$ are non-zero. The key fact to prove this is that for any pair $(\lambda, w) \in Y \times W^{v}$, the support of $H_{w}*Z_{\lambda}$ along $Y^{+}$ is in the convex hull of $\{ u\cdot \lambda, u \in [1,w]\}$. This fact comes from Lemma~\ref{lem Reecriture relations commutation longueur qcq} below. 

For any subset $E$ of $Y$ and any $i \in I$, set $R_{i}(E):= \conv(\{E, r_{i}(E)\}) \subset E + Q^{\vee}$. When $E = \{\lambda\}$ is reduced to a single element, we write $R_{i}(\lambda)$ instead of $R_{i}(\{\lambda\})$. For any $w \in W^{v}$ and any $\lambda \in Y$, set $R_{w}(\lambda):= \bigcup R_{i_{1}}(R_{i_{2}}(\ldots (R_{i_{k}}(\lambda)) \ldots))$, where the union is taken over all the reduced writings $r_{i_{1}}.r_{i_{2}}\ldots r_{i_{k}}$ of $w$.
\begin{rque}
\label{rqueFinitude des Rw(lambda)}
For any pair $(\lambda,w) \in Y \times W^{v}$, the set $R_{w}(\lambda)$ is actually finite. Indeed, given any finite set $E$ and any $i \in I$, the set $R_{i}(E)$ is bounded and contained in $E + Q^{\vee}$, hence must be finite. By induction, we get that for any integer $k \geq 0$ and any list $(i_{1}, \ldots, i_{k})$ of elements of $I$, the set $R_{i_1}(R_{i_2}(\ldots (R_{i_k}(E))\ldots))$ is also finite. As $w$ has only finitely many reduced writings,\footnote{The number of reduced writings of $w$ is upper-bounded by $|I|^{\ell(w)}$.} we obtain that $R_{w}(\lambda)$ is finite.
\end{rque}

\begin{lemme}
\label{lemReecriture relations commutation longueur 1}
For all $i \in I$ and all $\lambda \in Y$, the product $H_{i} * Z^{\lambda}$ is in $$ \bigoplus_{(\nu, t) \in R_{i}(\lambda)\times \{1,r_{i}\}} \RR\cdot Z^{\nu}H_{t}.$$
\end{lemme}
\begin{proof}
Let $i \in I$ and $\lambda \in Y$. If $\sigma_{i} = \sigma'_{i}$, then (BL4) implies that
\[ H_{i}*Z^{\lambda} = Z^{r_{i}(\lambda)}*H_{i} + (\sigma_{i} - \sigma_{i}^{-1})Z^{\lambda}\, \frac{1-Z^{-\alpha_{i}(\lambda)\alpha^{\vee}_{i}}}{1 - Z^{-\alpha^{\vee}_{i}}},
\]
hence we have the following alternative.
\begin{itemize}
\item Either $\alpha_{i}(\lambda) = 0$, and $H_{i}*Z^{\lambda} = Z^{\lambda}*H_{i}$, \textit{i.e} $H_{i}$ and $Z^{\lambda}$ commute to each other.
\item Or $\alpha_{i}(\lambda) >0$, in which case we have
\[ H_{i}* Z^{\lambda} = Z^{r_{i}(\lambda)}*H_{i} + (\sigma_{i} - \sigma_{i}^{-1})\sum_{k = 0}^{\alpha_{i}(\lambda) - 1} Z^{\lambda - k\alpha^{\vee}_{i}}\]
with $r_{i}(\lambda)$ and $\lambda - k\alpha^{\vee}_{i}$ in $R_{i}(\lambda)$ for all $k \in \llbracket 0, \alpha_{i}(\lambda) - 1 \rrbracket$.
\item Or $\alpha_{i}(\lambda) < 0$, in which case we have
\[ H_{i}*Z^{\lambda} = Z^{r_{i}(\lambda)}*H_{i} + (\sigma_{i} - \sigma_{i}^{-1})\sum_{k = 1}^{-\alpha_{i}(\lambda)} Z^{\lambda + k\alpha^{\vee}_{i}}\]
with $r_{i}(\lambda)$ and $\lambda + k\alpha^{\vee}_{i}$ in $R_{i}(\lambda)$ for all $k \in \llbracket 1, -\alpha_{i}(\lambda) \rrbracket$.
\end{itemize}
In any case, we have proven that $H_{i}* Z^{\lambda}$ is in $ \bigoplus_{(\nu, t) \in R_{i}(\lambda) \times \{1,r_{i}\}} \RR Z^{\nu}H_{t}$ when $\sigma_{i} = \sigma'_{i}$.

If $\sigma_{i} \not= \sigma'_{i}$, then $\alpha_{i}(Y) = 2\Z$, so we have now
\[ H_{i} * Z^{\lambda} = Z^{r_{i}(\lambda)} * H_{i} + Z^{\lambda} \left((\sigma_{i} - \sigma^{-1}_{i}) + (\sigma'_{i} - \sigma'^{-1}_{i})\right) Z^{-\alpha^{\vee}_{i}},
\]
and similar computations as those done in the $\sigma_{i} = \sigma'_{i}$ case complete the proof.
\end{proof}

\begin{lemme}
\label{lem Reecriture relations commutation longueur qcq}
For all $w,w' \in W^{v}$ and all $\lambda \in Y$, $H_{w'}*Z^{\lambda}H_{w}$ is in $$ \bigoplus_{(\nu, t) \in R_{w}(\lambda) \times [1,w]\cdot w'} \RR\cdot Z^{\nu}H_{t}.$$
\end{lemme}
\begin{proof}
The proof goes by induction on $\ell(w') \geq 0$. There is nothing to prove if $\ell(w') = 0$, and if $\ell(w') = 1$, this is exactly Lemma \ref{lemReecriture relations commutation longueur 1}. Now, fix an integer $k \geq 1$ and a pair $(w, \lambda) \in W^{v}\times Y^{+}$, and assume that for any $u \in W^{v}$ satisfying $\ell(u) \leq k-1$, the product $H_{u}*Z^{\lambda}H_{w}$ belongs to $ \bigoplus_{(\nu, t) \in R_{w}(\lambda) \times [1,w]\cdot u} \RR\cdot Z^{\nu}H_{t}$. Let $w' \in W^{v}$ be an element of length $k$ and write $w' = r_{i}u$ for $i \in I$ and $u \in W^{v}$ of length $k-1$. Then $ H_{w'}*Z^{\lambda}H_{w} = H_{i} * H_{u}*Z^{\lambda}H_{w}$ belongs to $ \bigoplus_{(\nu, t) \in R_{u}(\lambda) \times [1,u]\cdot w} \RR H_{i} * Z^{\lambda}H_{t}$, with
\[ \bigoplus_{(\nu, t) \in R_{u}(\lambda) \times [1,u]\cdot w} \RR H_{i} * Z^{\lambda}H_{t} \subset \bigoplus_{(\nu, t) \in R_{u}(\lambda) \times [1,u]\cdot w} \bigoplus_{(\nu', t') \in R_{i}(\nu) \times \{1, r_{i}\}} \RR\cdot Z^{\nu'}*H_{t'}*H_{t}\]
by Lemma \ref{lemReecriture relations commutation longueur 1}. Using (BL2) for $t' \in \{1,r_{i}\}$ and $t \in [1,u]\cdot w$, we get that $H_{t'}*H_{t}$ belongs to $\RR H_{r_{i}t} \oplus \RR H_{t}$, hence to $ \bigoplus_{v \in [1,w']\cdot w} \RR H_{v}$, and the lemma follows.
\end{proof}

\begin{lemme}
\label{lemEcriture des elements de Rwlambda}
Let $(\lambda, w) \in Y \times W^{v}$. For any $\nu \in R_{w}(\lambda)$, there is a family $(a_{u})_{u \leq w} \in [0,1]^{[1,w]}$ such that $ \sum_{u \in [1,w]} a_{u} = 1$ and $\nu = \sum_{u \in [1,w]} a_{u}\cdot u\cdot \lambda$.
\end{lemme}
\begin{proof}
This proof goes again by induction on $\ell(w) \geq 0$. There is nothing to do when $\ell(w) = 0$. Let $k \in \Z$ be non-negative and assume that the statement of the lemma is true whenever $w \in W^{v}$ is of length $k$. Let $w' \in W^{v}$ be of length $k+1$, fix $\lambda \in Y$ and pick $\nu \in R_{w'}(\lambda)$. Then there exists some triple $(i, w, \nu') \in I \times W^{v} \times W^{v}$, with $w$ of length $k$ and $\nu' \in R_{w}(\lambda)$, such that $\nu \in R_{i}(\nu')$. As we can write $\nu' = \sum_{u \in [1,w]} a_{u}u\cdot \lambda$ for some family $(a_{u})_{u \in [1,w]} \in [0,1]^{[1,w]}$ and $\nu = s\nu' + (1-s)r_{i}\nu'$ for some $s \in [0,1]$, we finally get that
$ \nu = \sum_{u \in [1,w]} \left(sa_{u}u\cdot \lambda + (1-s)a_{u}r_{i}u\cdot \lambda\right)$ with $r_{i}u \in [1,w']$ for all $u \in [1,w]$, hence the lemma follows.
\end{proof}

\begin{lemme}
\label{lemPositivite de Rwlambda inferieur a lambda}
\begin{enumerate}
\item For all $\lambda, \mu \in Y^{+}$, we have $(\lambda + \mu)^{++} \leq_{Q^{\vee}} \lambda^{++} + \mu^{++}$.
\item Let $(\lambda, w) \in Y^{+} \times W^{v}$. For all $\nu \in R_{w}(\lambda)$, we have $\nu^{++} \leq_{Q^{\vee}} \lambda^{++}$.
\end{enumerate}
\end{lemme}
\begin{proof}
Let $\lambda, \mu \in Y^{+}$ and $w \in W^{v}$ be such that $(\lambda + \mu)^{++} = w\cdot (\lambda + \mu)$. By Lemma~\ref{lemLemme 2.4 a) de GR14}, we have $w\cdot \lambda \leq_{Q^{\vee}} \lambda^{++}$ and $w\cdot \mu \leq_{Q^{\vee}} \mu^{++}$, hence we get the first statement. Together with Lemma~\ref{lemLemme 2.4 a) de GR14}, Remark~\ref{rqueFinitude des Rw(lambda)} and Lemma~\ref{lemEcriture des elements de Rwlambda}, the first statement implies the second one.
\end{proof}

Define the \textbf{height} $h(x)$ of any $x = \sum_{i \in I} x_{i} \alpha^{\vee}_{i} \in Q^{\vee}$ by $h(x):= \sum_{i \in I} x_{i}$. For any $\lambda \in Y^{+}$, let $w_{\lambda} \in W^{v}$ be the element of minimal length such that $w^{-1}\cdot \lambda \in \overline{C^{v}_{f}}$: then we have $\lambda = w_{\lambda}\cdot\lambda^{++}$.
\begin{lemme}
\label{lemComportement asymptotique hauteur longueur}
Let $\lambda \in Y^{++}$ and $(\mu_{n})_{n \geq 0} \in (W^{v}\cdot\lambda)^{\Z_{+}}$ be such that $$ \lim_{n \to +\infty} \ell(w_{\mu_{n}}) = + \infty.$$ Then $ \lim_{n \to +\infty} h(\mu_{n} - \lambda) = -\infty$.
\end{lemme}
\begin{proof}
By Lemma~\ref{lemLemme 2.4 a) de GR14}, we know that $h(\mu_{n} - \lambda)$ is well-defined. For all $\alpha \in \Phi_{+}$, we have $\alpha(\lambda) \geq 0$. Assume that $\alpha(\lambda) = 0$: then $\alpha_{i}(\lambda) = 0$ for all $i \in I$, hence $r_{i}\lambda = \lambda$ for all $i \in I$ and $W^{v}\cdot\lambda$ is then reduced to $\{\lambda\}$, which contradicts the fact that $ \lim_{n \to +\infty} \ell(w_{\mu_{n}}) = + \infty$. This proves that $\alpha(\lambda) > 0$ for all $\alpha \in \Phi_{+}$.

Now let $\rho: \A \to \R$ be a linear form satisfying $\rho(\alpha^{\vee}_{i}) = 1$ for all $i \in I$. Pick $n \in \Z_{+}$ and set $w:=w_{\mu_{n}}$. Then we have
$$ h(\mu_{n} - \lambda) = h(w\cdot \lambda - \lambda) = \rho(w\cdot \lambda - \lambda) = (w^{-1}\rho - \rho)(\lambda).$$ Thanks to \cite[Cor.\,(3)]{kumar2002kac}, we know that $w^{-1}\cdot \rho - \rho = - \sum_{\alpha \in \Phi_{w^{-1}}} \alpha$ with $\Phi_{w^{-1}}:= \Delta^{+} \cap w^{-1}\cdot \Delta^{-}$. By \cite[Lem.\,3.14]{kumar2002kac}, we also know that $\vert \Phi_{w^{-1}} \vert = \ell(w^{-1}) = \ell(w)$. Letting $n$ go to $+\infty$, we obtain that $$h(\mu_{n} - \lambda) = - \sum_{\alpha \in \Phi_{w_{\mu_{n}}^{-1}}} \alpha(\lambda)$$ goes to $-\infty$, as required.
\end{proof}

\begin{lemme}
\label{lemFinitude des antecedants de mu par Rw}
For all $(\lambda, \mu, w) \in Y^{+} \times Y^{+} \times W^{v}$, the set $\{\nu \in W^{v}\cdot\lambda \mid \mu \in R_{w}(\nu)\}$ is finite.
\end{lemme}
\begin{proof}
Let $N \geq 0$ be an integer that satisfies the following property:
\begin{equation}
\label{condition sur h et ell}
\forall \nu' \in W^{v}\cdot\lambda \mid \ell(w_{\nu'}) \geq N, \ h(\nu' - \lambda) < h(\mu - \lambda).
\end{equation}
(Such an integer exists by Lemma~\ref{lemComportement asymptotique hauteur longueur}.) Let $\nu \in W^{v}\cdot\lambda$ be such that $\mu$ belongs to $R_{w}(\nu)$ and set $u:=w_{\nu}$. Following Lemma~\ref{lemEcriture des elements de Rwlambda}, write $\mu = \sum_{x \in [1,w]} a_{x}x\cdot\nu$ with $(a_{x})_{x \in [1,w]} \in [0,1]^{[1,w]}$ such that $ \sum_{x \in [1,w]} a_{x} = 1$. For all $x \in [1,w]$, set $v(x):= w_{x\cdot\nu}$. Then there exists $x \in [1,w]$ such that $\ell(v(x)) < N$. Indeed, suppose by contradiction that $\ell(v(x)) \geq N$ for all $x \in [1,w]$. As
\[ \mu - \lambda = \sum_{x \in [1,w]} a_{x}(x\cdot\nu - \lambda) = \sum_{x \in [1,w]} a_{x}(v(x)\nu - \lambda),
\]
we obtain from \eqref{condition sur h et ell} that $$h(\mu - \lambda) = \sum_{x \in [1,w]} a_{x}h(v(x) - \lambda) < \sum_{x \in [1,w]} a_{x} h(\mu - \lambda) = h(\mu - \lambda),$$ which is absurd. We can hence pick $\overline{u} \in [1,w]$ such that $\ell(v(\overline{u})) < N$. As $\overline{u}\cdot\nu = v(\overline{u})\cdot\nu^{++}$, we have $\ell(\overline{u}^{-1}v(\overline{u})) \geq \ell(u)$ by definition of $u$. It implies that $\ell(v(\overline{u})) + \ell(u) \geq \ell(v(\overline{u})) + \ell(\overline{u}) \geq \ell(u)$, hence we have $\ell(u) \leq N + \ell(\overline{u})$, so $\ell(u)$ is upper-bounded and the lemma follows.
\end{proof}

To allow infinite sums in $\hat{\HH}$, we need a suitable notion of summable families, as we have by Definition \ref{def Familles sommables Looijenga algebra} for the Looijenga algebra $\R\llbracket Y\rrbracket$. This is the purpose of the next definition.
\begin{defn}
\label{def Familles sommables completed IH algebra}
A family $(a_j)_{j\in J}\in \hat{\HH}^J$ is \textbf{summable} when the following properties hold.
\begin{itemize}
\item For all $\lambda \in Y^{+}$, the set $\{j \in J \mid \exists w \in W^{v}, \, \pi_{\lambda, w}(a_{j}) \not= 0\}$ is finite.
\item $ \bigcup_{j \in J} \supp(a_{j})$ is almost finite.
\end{itemize}
If $(a_{j})_{j \in J} \in \hat{\HH}^{J}$ is a summable family, we define $ \sum_{j \in J} a_{j} \in \hat{\HH}$ by
\begin{multline*}
\sum_{j \in J} a_{j}:= \sum_{(\lambda, w) \in Y^{+} \times W^{v}} a_{\lambda, w}Z^{\lambda}H_{w},\\[-5pt]
 \text{with}\quad a_{\lambda,w}:= \sum_{j \in J} \pi_{\lambda,w}(a_{j}) \text{ for all } (\lambda, w) \in Y^{+} \times W^{v}.
\end{multline*}
\end{defn}
The next result claims that the product of two summable families is well-defined. This is the crucial step in the process that turns $\hat{\HH}$ into a convolution algebra for $*$. Recall that elements of $\HH$ corresponds to elements of $\hat{\HH}$ with finite support.
\begin{thm}
\label{Thm Convolution pour les familles sommables}
Let $(a_{j})_{j \in J} \in \HH^{J}$ and $(b_{k})_{k \in J} \in \HH^{K}$ be two summable families. Then $(a_{j} *b_{k})_{(j,k) \in J \times K}$ is summable and $ \sum_{(j,k) \in J \times K} a_{j} * b_{k}$ only depends on the two elements $ \sum_{j \in J} a_{j}$ and $ \sum_{k \in K} b_{k}$ of $\hat{\HH}$.
\end{thm}
\begin{proof} For any $j \in J$ and $k \in K$, we can decompose $a_{j}$ and $b_{k}$ as follows:
\[ a_{j} = \sum_{(\lambda, u) \in Y^{+} \times W^{v}} a_{j,\lambda, u}Z^{\lambda}H_{u} \quad \text{and} \quad b_{k} = \sum_{(\mu, v) \in Y^{+} \times W^{v}} b_{k,\mu,v}Z^{\mu}H_{v}.\]
For any $\lambda \in Y^{+}$, we set
\[ J(\lambda):= \{j \in J \mid \exists u \in W^{v}, \ a_{j,\lambda, u} \not= 0\} \ \text{and} \ K(\lambda):= \{k \in K \mid \exists v \in W^{v}, \ b_{k,\mu,v} \not= 0 \}.
\]
For any triple $(u,v,\mu) \in W^{v} \times W^{v} \times Y^{+}$, the application of Lemma~\ref{lem Reecriture relations commutation longueur qcq} to $H_{u}*\nobreak Z^{\mu}H_{v}$ gives a family $(z^{u,v,\mu}_{\nu, t})_{(\nu, t)\in R_{u}(\mu) \times [1,u]\cdot v}$ of scalars that satisfy $$ H_{u} * Z^{\mu}H_{v} = \sum_{(\nu, t) \in R_{u}(\mu) \times [1,u]\cdot v} z^{u,v,\mu}_{\nu,t}Z^{\nu}H_{t}.$$ Given $j \in J$ and $k \in K$, we then have
\begin{equation}
\label{egalite pour aj*bk}
a_{j} * b_{k} = \sum_{(\lambda,u), (\mu,v) \in Y^{+} \times W^{v}} \sum_{(\nu, t) \in R_{u}(\mu) \times [1,u]\cdot v} a_{j,\lambda, u} b_{k, \mu, v} z^{u,v,\mu}_{\nu,t} Z^{\lambda + \nu}H_{t}.
\end{equation}
This equality implies that $\supp_{W^{v}}(a_{j} * b_{k})$ is contained in $S^{a}_{j}.S^{b}_{k}$, where $S^{c}_{n}:= \bigcup_{w \in \supp_{W^{v}}(c_{n})} [1,w]$ for $c \in \{a,b\}$ and $n \in \{j,k\}$. This already gives the finiteness of
\[ S_{W^{v}}(a, b):= \bigcup_{(n,m) \in J \times K} \supp_{W^{v}}(a_{n} * b_{m}) \subset (\bigcup_{n\in J} S_{n}^a). (\bigcup_{m\in K}S_{m}^b).
\]
If we set $S:= \bigcup_{j \in J} \supp(a_{j}) \cup \bigcup_{k \in K} \supp(b_{k})$ and $S_{Y}\!:=\!\pi_{Y}(S)$, where \hbox{$\pi_{Y}{:}\, Y\!\times\!W^{v}\!\to\!Y$} is the projection on the first coordinate, then $S$ and $S_{Y}$ are by construction both almost finite. We can hence choose an integer $N \geq 0$ and elements $\kappa_{1}, \ldots, \kappa_{N} \in Y^{++}$ such that: for all $x \in S_{Y}$, there exists $i \in \llbracket 1, N \rrbracket$ such that $x^{++} \leq_{Q^{\vee}} \kappa_{i}$.

Now pick a pair $(\rho, s) \in Y^{+} \times W^{v}$. The image of $a_{j}*b_{k}$ by the projection $\pi_{\rho,s}$ is given by
\[ \pi_{\rho,s}(a_{j}*b_{k}) = \sum_{(\lambda,u), (\mu,v) \in Y^{+} \times W^{v}} \sum_{\nu \in R_{u}(\mu) \mid \lambda + \nu = \rho} a_{j,\lambda,u}b_{k,\mu,v}z^{u,v,\mu}_{\nu,s}.\]
Set $$F(\rho):= \{ (\lambda, \nu) \in S_{Y} \times Y^{+} \mid \exists (\mu, u) \in S_{Y} \times S_{W^{v}}, \nu \in R_{u}(\mu) \text{ and } \lambda + \nu = \rho \}.$$ If $(\lambda, \nu)$ is an element of $F(\rho)$, choose some $(\mu,u)\!\in\!S_{Y}\!\times\!S_{W^{v}}$ such that $\nu\!\in\!R_{u}(\mu)$ and $\lambda + \nu = \rho$. Then Lemma~\ref{lemPositivite de Rwlambda inferieur a lambda} implies the existence of $n,m \in \llbracket 1, N \rrbracket$ such that $\lambda \leq_{Q^{\vee}} \lambda^{++} \leq_{Q^{\vee}} \kappa_{n}$ and $\nu \leq_{Q^{\vee}} \mu^{++} \leq_{Q^{\vee}} \kappa_{m}$, which proves that $F(\rho)$ is finite.

Set $$F'(\rho):=\{\mu \in S_{Y} \mid \exists (u,(\lambda,\nu)) \in S_{W^{v}} \times F(\rho), \ \nu \in R_{u}(\mu) \}.$$ If $\mu$ is an element of $F'(\rho)$ and if $(u,(\lambda,\nu)) \in S_{W^{v}} \times F(\rho)$ is such that $\nu \in R_{u}(\mu)$, applying again Lemma~\ref{lemPositivite de Rwlambda inferieur a lambda} gives an integer $i \in \llbracket 1, N\rrbracket$ such that \hbox{$\nu^{++} \leq_{Q^{\vee}} \mu^{++} \leq_{Q^{\vee}} \kappa_{i}$}. This implies the finiteness of $F'(\rho)^{++}$, which implies itself the finiteness of $F'(\rho)$ by Lemma~\ref{lemFinitude des antecedants de mu par Rw}.

Set $$F_{1}(\rho):= \{ \lambda \in Y^{+} \mid \exists \nu \in Y^{+}, (\lambda, \nu) \in F(\rho)\text{ and }L(\rho):= \hspace*{-2mm}\bigcup_{(\lambda,\mu) \in F_{1}(\rho) \times F'(\rho)}\hspace*{-2mm} J(\lambda) \times K(\mu).$$ By construction, $L(\rho)$ is finite and for all $(j,k)\!\in\!J\!\times\!K$, the non-vanishing of \hbox{$\pi_{\rho,s}(a_{j} * b_{k})$} implies that $(j,k)$ belongs to $L(\rho)$. Also, if $(\rho,s)$ is in $$ \bigcup_{(j,k) \in J \times K} \supp(a_{j}*b_{k}),$$ then there exists $(\lambda, \mu) \in S_{Y} \times S_{Y}$, $u \in S_{W^{v}}$ and $\nu \in R_{u}(\mu)$ such that $\lambda + \nu = \rho$. Applying once more Lemma~\ref{lemPositivite de Rwlambda inferieur a lambda}, we get integers $n,m \in \llbracket 1,N\rrbracket$ such that $$\rho^{++} \leq_{Q^{\vee}} \lambda^{++} + \nu^{++} \leq_{Q^{\vee}} \kappa_{n} + \kappa_{m}.$$ Summed up, all this shows that $ \bigcup_{(j,k) \in J \times K} \supp(a_{j}*b_{k})$ is almost finite and that $(a_{j}*b_{k})_{(j,k) \in J \times K}$ is a summable family.

Moreover, we have
\[ \begin{aligned}
\pi_{\rho,s}\biggl(\sum_{(j,k) \in J \times K} a_{j}*b_{k} \biggr) &= \sum_{(\lambda,u), (\mu,v) \in Y^{+} \times W^{v}} \sum_{\nu \in R_{u}(\mu) \mid \lambda + \nu = \rho} \sum_{(j,k) \in J\times K} a_{j,\lambda, u} b_{k,\mu,v} z^{u,v,\mu}_{\nu,s}\\
&= \sum_{(\lambda,u), (\mu,v) \in Y^{+} \times W^{v}} \sum_{\nu \in R_{u}(\mu) \mid \lambda + \nu = \rho} a_{\lambda,u}b_{\mu,v}z^{u,v,\mu}_{v,s},
\end{aligned}
\]
where we set
\[
\sum_{j \in J} a_{j} = \sum_{(\lambda, u) \in Y^{+} \times W^{v}} a_{\lambda,u}Z^{\lambda}H_{u}\quad\text{and}\quad\sum_{k \in K} b_{k} = \sum_{(\mu, v) \in Y^{+} \times W^{v}} b_{\mu,v} Z^{\mu}H_{v},
\]
hence the lemma is proved.
\end{proof}

\begin{defn}
\label{DefinitionConvolution dans completed IH}
For any summable families $(a_{j})_{j \in J} \in \HH^{J}$ and $(b_{k})_{k \in J} \in \HH^{K}$, we set
\[ a*b:= \sum_{(j,k) \in J \times K} a_{j} * b_{k} \in \hat{\HH}, \quad \text{with } a:= \sum_{j \in J} a_{j} \text{ and } b:= \sum_{k \in K} b_{k}.\]
\end{defn}
\begin{cor}
\label{CorCompleted IH est une algebre}
The convolution product $*$ provides $\hat{\HH}$ with a structure of associative $\RR$-algebra.
\end{cor}
\begin{proof}
Theorem~\ref{Thm Convolution pour les familles sommables} ensures that $(\hat{\HH}, *)$ is an $\RR$-algebra. The associativity of $*$ in $\hat{\HH}$ comes from Theorem~\ref{Thm Convolution pour les familles sommables} and from the associativity of $*$ in $\HH$.
\end{proof}
The algebra $\hat{\HH}$ is called the \textbf{completed Iwahori-Hecke algebra of $(\A, (\sigma_{i}, \sigma'_{i})_{i \in I})$ over $\RR$}.

\begin{exmp}
Let $\I$ be a thick masure of finite thickness on which a group $G$ acts strongly transitively. For any $i \in I$, pick a panel $P_{i}$ of $\{x \in \A \mid \alpha_{i}(x) = 0 \}$ and a panel $P'_{i}$ of $\{x \in \A \mid \alpha_{i}(x) = 1 \}$. Let $1 + q_{i}$ (resp. $1 + q'_{i}$) be the number of chambers in $\I$ that contain $P_{i}$ (resp. $P'_{i}$) and set $\sigma_{i} = \sqrt{q_{i}}, \ \sigma'_{i} = \sqrt{q'_{i}}$. Then $(\sigma_{i}, \sigma'_{i})_{i \in I}$ satisfy the relations stated at the beginning of Section~\ref{secAlgebre d'IH completee} and the completed Iwahori-Hecke algebra of $(\A, (\sigma_{i}, \sigma'_{i})_{i \in I})$ over $\RR$ is called the \textbf{completed Iwahori-Hecke algebra of $\I$ over $\RR$}.
\end{exmp}

\subsection{Center of the Iwahori-Hecke algebras}
The goal of this section is to compute the center of the Iwahori-Hecke algebra $\HH$ and of its completed version $\hat{\HH}$. Our proof is basically an adaptation to this context of the proof of \cite[Th.\,1.4]{nelsen2003polynomials}. In the sequel, we denote by $\mathcal{Z}(A)$ the center of any $\RR$-algebra $A$.

\subsubsection{The completed Bernstein-Lusztig bimodule $\overline{\BLHH}$}
To determine $\mathcal{Z}(\hat{\HH})$, we want to compute elements of the form $Z^{\mu}*z*Z^{-\mu}$ for $z \in \mathcal{Z}(\hat{\HH})$ and $\mu \in Y^{+}$. However, left and right multiplication by $\Z^{\lambda}$ are only defined in $\hat{\HH}$ for $\lambda \in Y^{+}$. To~extend multiplication by $Z^{\lambda}$ for arbitrary $\lambda \in Y$, we need to pass to a bigger space: indeed, if $\lambda \in Y$ is not in $Y^{+}$, multiplication by $Z^{\lambda}$ obviously does not stabilize~$\hat{\HH}$, as $Z^{\lambda} * 1 = Z^{\lambda}$ is not in $\hat{\HH}$ in this case. The bigger space aforementioned is a ``completion'' $\overline{\BLHH}$ of $\overline{\BLHH}$ that contains $\hat{\HH}$. Note that $\overline{\BLHH}$ will not be equipped with a structure of algebra, but with a structure of $\RR[Y]$-bimodule compatible with the convolution product $*$ on $\hat{\HH}$.

Any $a=(a_{\lambda, w}) \in \RR^{Y \times W^{v}}$ will also be written as $a = \sum_{(\lambda, w) \in Y \times W^{v}} a_{\lambda,w} Z^{\lambda}H_{w}$. For such an $a$, we define the \textbf{support of $a$ along $W^{v}$} as $$\supp_{W^{v}}(a):= \{w \in W^{v} \mid \exists \lambda \in Y, a_{\lambda, w} \not= 0 \}.$$ We set $\overline{\BLHH}:= \{a \in \RR^{Y \times W^{v}} \mid \supp_{W^{v}}(a) \text{ is finite}\}$; note that $\BLHH$ and $\hat{\HH}$ can be seen as subspaces of $\overline{\BLHH}$. For any pair $(\rho, s) \in Y \times W^{v}$, we have again a projection map $\pi_{\rho, s}: \overline{\BLHH} \to \RR$ defined by:
\[ \forall \sum_{(\lambda,w)\in Y\times W^{v}} a_{\lambda,w} Z^{\lambda}H_{w}\in {\overline{\BLHH}}, \qquad \pi_{\rho,s}\biggl(\sum_{(\lambda, w) \in Y \times W^{v}} a_{\lambda,w} Z^{\lambda}H_{w}\biggr) = a_{\rho,s}.\]
\begin{defn}
\label{DefSommable dans Completed BL bimodule}
A family $(a_{j})_{j \in J} \in \overline{\BLHH}$ is \textbf{summable} if the following properties hold.
\begin{itemize}
\item For all pair $(\rho, s) \in Y \times W^{w}$, the set $\{j \in J \mid \pi_{s,\rho}(a_{j}) \not= 0\}$ is finite.
\item $ \bigcup_{j \in J} \supp_{W^{v}}(a_{j})$ is almost finite.
\end{itemize}
If $(a_{j})_{j \in J} \in \overline{\BLHH}$ is a summable family, we define $ \sum_{j\in J}a_{j} \in {\overline{\BLHH}}$ as
\[ \sum_{j\in J} a_{j}:=\sum_{(\lambda,w)\in Y\times W^{v}} a_{\lambda,w}Z^{\lambda} H_{w}, \quad\text{with } a_{\lambda,w}:= \sum_{j\in J} \pi_{\lambda,w}(a_{j}) \text{ for all }(\lambda,w)\in Y\times W^{v}.
\]
\end{defn}

\begin{lemme}
\label{lemDefinition action de Y sur completed BL bimodule}
Let $(a_{j})_{j \in J} \in (\BLHH)^{J}$ be a summable family in $\overline{\BLHH}$ and $a:= \sum_{j \in J} a_{j} \in \ \overline{\BLHH}$. For any $\mu \in Y$, $(a_{j} * Z^{\mu})_{j \in J}$ and $(Z^{\mu} * a_{j})_{j \in J}$ are summable families of $\overline{\BLHH}$, and the elements $ \sum_{j \in J} Z^{\mu}*a_{j}$ and $ \sum_{j \in J} a_{j} * Z^{\mu}$ only depend on $a$ and $\mu$ (but not on the choice of the family $(a_{j})_{j \in J}$.

Moreover, setting $a \ \overline{*} \ Z^{\mu}:= \sum_{j \in J} a_{j} * Z^{\mu}$ and $Z^{\mu} \ \overline{*} \ a:= \sum_{j \in J} Z^{\mu}*a_{j}$, we define a convolution product that provides $\overline{\BLHH}$ with a structure of $\RR[Y]$-bimodule.
\end{lemme}

\begin{proof}
Let $(a_{j})_{j \in J}\!\in\!(\BLHH)^{J}$ be a summable family and set $S\!:= \bigcup_{j \in J} \supp_{W^{v}}(a_{j})$. For all pair $(\lambda,w) \in Y \times W^{v}$, set
$J(\lambda, w):= \{j \in J \mid \pi_{\lambda, w}(a_{j}) \neq 0\}$. For all $(\mu, \rho, s, j) \in Y \times Y \times W^{v} \times J$, we have $ \pi_{\rho,s}\left(Z^{\mu} * a_{j}\right) = \pi_{\rho - \mu, s}(a_{j})$, hence the summability of $(Z^{\mu} * a_{j})_{j \in J}$ directly comes from the summability of $(a_{j})_{j \in J}$ and $$ \pi_{\rho,s}\biggl(\sum_{j \in J} Z^{\mu} * a_{j}\biggr) = \pi_{\rho-\mu,s}(a)$$ only depends on $a$ and $\mu$.

The corresponding statement for $(a_{j}*Z^{\mu})_{j \in J}$ is a little bit trickier to prove. Given $w \in W^{v}$, Lemma~\ref{lem Reecriture relations commutation longueur qcq} gives a family $(z^{w}_{\nu,t})_{(\nu, t) \in R_{w}(\mu) \times [1,w]}$ of coefficients in $\RR$ such that
\[ H_{w} * Z^{\mu} = \sum_{(\nu, t) \in R_{w}(\mu) \times [1,w]} z^{w}_{\nu,t} Z^{\nu}H_{t}.\]
For $j \in J$, write $a_{j} = \sum_{(\lambda, w) \in Y \times W^{v}} a_{j,\lambda,w}Z^{\lambda}H_{w}$ with $a_{j,\lambda,w} \in \RR$ for any pair $(\lambda,w) \in Y \times W^{v}$. Then we have, for all $(\mu,\rho,s,j) \in Y \times Y \times W^{v} \times J$:
\[\begin{aligned} \pi_{\rho,s}(a_{j}*Z^{\mu}) &= \pi_{\rho,s} \biggl(\sum_{(\lambda,w)\in Y\times S}a_{j,\lambda,w}Z^{\lambda} H_{w}*Z^{\mu} \biggr) \\ &= \pi_{\rho,s} \biggl(\sum_{(\lambda,w)\in Y\times S } \biggl(\sum_{(\nu, t) \in R_{w}(\mu)\times [1,w]}a_{j,\lambda,w} z^{w}_{\nu,t} Z^{\nu+\lambda} H_{t} \biggr) \biggr)\\ &= \sum_{(\lambda,w)\in Y\times S} \biggl(\sum_{\nu \in R_{w}(\mu) \mid \nu+\lambda=\rho} a_{j,\lambda,w}z^{w}_{\nu,s} \biggr).\end{aligned}\]
Fix $\mu \in Y$ and set $F(\rho,s):= \{j \in J \ \vert \pi_{\rho,s}(a_{j}*Z^{\mu}) \not= 0\}$ for all pair $(\rho,s) \in Y \times W^{v}$. By the previous computation, we have $ F(\rho, s) \subset \bigcup_{(w,\nu) \in S \times R_{w}(\mu)} J(\rho -\nu, w)$, hence $F(\rho, s)$ is finite. Moreover, for any $j \in J$, $\supp_{W^{v}}(a_{j}*Z^{\mu})$ is contained in $ \bigcup_{w \in S} [1,w]$, hence $ \bigcup_{j\in J} \supp_{W^{v}} (a_{j}*Z^{\mu})$ is almost finite and $(a_{j} * Z^{\mu})_{j \in J}$ is summable. Also note that the calculation of $ \pi_{\rho,s}\left(a_{j} * Z^{\mu}\right)$ we did above implies that for all $(\rho,s) \in Y \times W^{v}$, we have
\[ \begin{aligned} \pi_{\rho,s} \biggl(\sum_{j \in J} a_{j} * Z^{\mu} \biggr) & = \sum_{j \in J} \biggl(\sum_{(\lambda, w) \in Y \times S} \biggl(\sum_{\nu \in R_{w}(\mu) \mid \nu + \lambda = \rho} a_{j,\lambda,w} z^{v}_{\nu,s} \biggr) \biggr)\\
&= \sum_{(\lambda, w) \in Y \times S} \biggl(\sum_{\nu \in R_{w}(\mu) \mid \nu + \lambda = \rho} \biggl(\sum_{j \in J} a_{j,\lambda, w} z^{w}_{\nu, s} \biggr) \biggr),
\end{aligned}
\]
hence if $a = \sum_{(\lambda, w) \in Y \times W^{v}} a_{\lambda,w} Z^{\lambda}H_{w}$, then $$\pi_{\rho,s} \biggl(\sum_{j \in J} a_{j} * Z^{\mu} \biggr) = \sum_{(\lambda,w) \in Y \times S} \sum_{\nu \in R_{w}(\mu) \mid \nu + \lambda = \rho} a_{\lambda,w} z^{w}_{\nu,s}$$ only depends on $a$ and $\mu$.

To conclude the proof, we are left to show that for any $(b, \mu, \mu') \in Y^{3}$, we have
\begin{align*}
Z^{\mu}\ \overline{*}\ (Z^{\mu'}\ \overline{*}\ b)&=(Z^{\mu+\mu'})\ \overline{*}\ b, \ (b\ \overline{*}\ Z^\mu)\ \overline{*}\ Z^{\mu'}=b\ \overline{*}\ (Z^{\mu+\mu'})\\
\tag*{and} Z^\mu\ \overline{*}\ (b\ \overline{*}\ Z^{\mu'})&=(Z^\mu\ \overline{*}\ b)\ \overline{*}\ Z^{\mu'}.
\end{align*}
To do this, write $b = \sum_{(\lambda, w) \in Y \times W^{v}} b_{\lambda,w} Z^{\lambda}H_{w}$ with $(b_{\lambda,w}) \in \RR^{Y \times W^{v}}$ and apply the first part of this lemma to $J = Y \times W^{v}$: by associativity of $*$ in $\BLHH$, we get the required identities.
\end{proof}

\begin{cor}
\label{corCompatibilite convolutions}
For all $a\in \hat{\HH}$ and $\mu\in Y^{+}$, we have $Z^{\mu} * a=Z^{\mu}\ \overline{*}\ a$ and $a*Z^{\mu}=a\ \overline{*}\ Z^{\mu}$.
\end{cor}
This statement justifies that we will from now on denote $*$ instead of $\overline{*}$.

\subsubsection{Computation of the centers}
\label{subsubsectionCalcul des centres}
\begin{lemme}
\label{lemCommutativite action Y sur le centre de IH completee}
For all $a \in \mathcal{Z}(\hat{\HH})$ and $\mu \in Y$, we have $a*Z^{\mu} = Z^{\mu}*a$.
\end{lemme}
\begin{proof}
Write $\mu = \mu_{+} - \mu_{-}$ with $\mu_{+}, \mu_{-} \in Y^{+}$. The associativity of $*$ proven in Lemma \ref{lemDefinition action de Y sur completed BL bimodule} implies that $Z^{\mu_{-}} *(Z^{-\mu_{-}}*a) = a$, hence $Z^{-\mu_{-}}*a = a * Z^{-\mu_{-}}$ and $Z^{\mu}*a = Z^{\mu_{+}} * a * Z^{-\mu_{-}} = a * Z^{\mu}$.
\end{proof}

For any $w \in W^{v}$, we introduce the following subsets of $\overline{\BLHH}$:
\[ \left\{ \begin{aligned}
\overline{\BLHH}_{\not\geq w} &:= \left\{ \textstyle\sum_{(\lambda, v) \in Y \times W^{v}} a_{\lambda,v}Z^{\lambda}H_{v} \in \ \overline{\BLHH} \mid (a_{\lambda,v} \not= 0) \Rightarrow (v \not\geq w)\right\}; \\
\overline{\BLHH}_{=w} &:= \left\{ \textstyle\sum_{(\lambda, v) \in Y \times W^{v}} a_{\lambda,v}Z^{\lambda}H_{v} \in \ \overline{\BLHH} \mid (v \not= w) \Rightarrow (a_{\lambda,v} = 0) \right\}. \\
\end{aligned}\right.\]
We let $\hat{\HH}_{\not\geq w}:= \overline{\BLHH}_{\not\geq w} \cap \hat{\HH}$ and $\hat{\HH}_{=w}:= \overline{\BLHH}_{=w} \cap \hat{\HH}$ be the corresponding subspaces in $\hat{\HH}$.

\begin{lemme}
\label{lemStabilite des Hw}
Let $w \in W^{v}$ and $\lambda \in Y$.
\begin{enumerate}
\item We have $$\overline{\BLHH}_{\ngeq w} * Z^{\lambda} \subset {\overline{\BLHH}}_{\ngeq w},\quad Z^{\lambda}*{\overline{\BLHH}}_{\ngeq w} \subset {\overline{\BLHH}}_{\ngeq w}\quad\text{and}\quad Z^{\lambda} * {\overline{\BLHH}}_{=w} \subset {\overline{\BLHH}}_{= w}.$$
\item There exists $S \in {\overline{\BLHH}}_{\ngeq w}$ such that $H_{w}*Z^{\lambda} = Z^{w(\lambda)}H_{w} + S$.
\end{enumerate}
\end{lemme}
\begin{proof}
These statements are consequence of \cite[Th.\,6.2]{bardy2016iwahori}, of Lemma~\ref{lem Reecriture relations commutation longueur qcq} and of
Lemma~\ref{lemDefinition action de Y sur completed BL bimodule}.
\end{proof}

The following theorem is the heart of this section, as it describes the center of the completed Iwahori-Hecke algebra $\hat{\HH}$. This generalizes a well-known theorem of Bernstein (see \cite[Th.\,8.1]{lusztig1983singularities}, which seems to be the first published version of this result) and gives a recovery of the spherical Hecke algebra $\HH_{s}$ as center of a natural Iwahori-Hecke algebra.

\begin{thm}
\label{ThmCentreIHcompletee}
The center of the completed Iwahori-Hecke algebra $\hat{\HH}$ is $\mathcal{Z}(\hat{\HH}) = R\llbracket Y\rrbracket^{W^{v}}$.
\end{thm}
\begin{proof}
Let $a = \sum_{\lambda \in Y^{+}} a_{\lambda}Z^{\lambda}$ be an element of $R\llbracket Y\rrbracket^{W^{v}}$ and $i \in I$. We can write $a = x + y$ with $x = \sum_{\lambda \in Y^{+} \cap \ker \alpha_{i}} a_{\lambda}Z^{\lambda}$ and $y = \sum_{\lambda \in Y^{+} \mid \alpha_{i}(\lambda) >0} a_{\lambda} (Z^{\lambda} + Z^{r_{i}(\lambda)})$. As~$x$ and $y$ commute with $H_{i}$, we obtain that $a$ commutes with $H_{i}$ for all $i \in I$, hence we have $a \in \mathcal{Z}(\hat{\HH})$ and $R\llbracket Y\rrbracket^{W^{v}} \subset \mathcal{Z}(\hat{\HH})$.

Conversely, let $z$ be an element of $\mathcal{Z}(\hat{\HH}) \subset {\overline{\BLHH}}$ and write $$z = \sum_{(\lambda, w) \in Y \times W^{v}} c_{\lambda,w}Z^{\lambda}H_{w}.$$ First assume that the set $$F = \{(\lambda,w) \in Y \times W^{v} \mid w \not= 1 \text{ and } c_{\lambda,w} \not= 0\}$$ is non empty and choose a pair $(\nu,m) \in F$ with $m$ maximal in $W^{v}$ (for the Bruhat order). Write $z = x + y$ with $x = \sum_{\lambda \in Y} x_{\lambda, m} Z^{\lambda}H_{m} \in \hat{\HH}_{=m}$ and $y\in \hat{\HH}_{\ngeq m}$. Lemmas~\ref{lemCommutativite action Y sur le centre de IH completee} and~\ref{lemStabilite des Hw} imply that for all $y \in Y$, we have $$ z = Z^{\mu} * z * Z^{-\mu} = \sum_{\lambda \in Y} c_{\lambda,m}Z^{\lambda+\mu - m(\mu)}H_{m} + y_{1}$$ with $y_{1} \in {\overline{\BLHH}}_{\ngeq m}$. By projection on ${\overline{\BLHH}}_{=m}$, we get that
$$x = \sum_{\lambda \in Y} c_{\lambda, m} Z^{\lambda + \mu - m(\mu)} H_{m}.$$ Now let $J \subset Y$ be a finite set that satisfies the following property: $$ \forall (\lambda, w) \in Y \times W^{v}, \ (c_{\lambda, w} \not= 0) \Longrightarrow (\exists \nu \in J \mid \lambda \leq_{Q^{\vee}} \nu).$$ Pick $\gamma \in Y$ such that $c_{\gamma, m} \not= 0$.
Then for all $\mu \in Y$, we have $c_{\gamma + \mu - m(\mu), m} \not=0$, hence there exists $\nu(\mu) \in J$ such that $\gamma + \mu - m(\mu) \leq_{Q^{\vee}} \nu(\mu)$. In particular, pick $\mu \in Y \cap C^{v}_{f}$ and let $\nu \in J$ be such that for all integer $n \geq 0$, we have $\gamma + \sigma(n)(\mu - m(\mu)) \leq_{Q^{\vee}} \nu$, where $\sigma: \Z_{+} \to \Z_{+}$ is such that $ \lim_{n \to +\infty} \sigma(n) = +\infty$.
Then $\gamma + \sigma(1)(\mu - m(\mu)) - \nu$ belongs to $Q^{\vee}$, and Lemma~\ref{lemLemme 2.4 a) de GR14} implies that $\mu - m(\mu)$ is a non-zero element of $Q^{\vee}_{+}$. Hence for $n$ large enough, we have $$\gamma+ \sigma(n)(\mu-m(\mu))= \gamma+\sigma(1)(\mu-m(\mu))+ (\sigma(n)-\sigma(1))(\mu-m(\mu)) >_{Q^\vee} \nu,$$ which contradicts the definition of $\nu$.
Consequently, $F$ is empty and $z$ belongs to $R\llbracket Y\rrbracket$. We can hence simplify the above decomposition of $z$ and write $z = \sum_{\lambda \in Y} c_{\lambda}Z^{\lambda}$ with $c_{\lambda} = c_{\lambda, 1}$. By Lemma~\ref{lemStabilite des Hw}, we know that for any $w \in W^{v}$, we have $$H_{w} * z = \sum_{\lambda \in Y} Z^{w(\lambda)}H_{w} + y$$ for some $y \in {\overline{\BLHH}}_{\ngeq w}$. But $z$ commutes with $H_{w}$, so we also have $$ H_{w} * z = z * H_{w} = \sum_{\lambda \in Y} c_{\lambda}Z^{\lambda}H_{w}.$$ By projection on $\hat{\HH}_{=w}$, we get that $$ \sum_{\lambda \in Y} c_{\lambda}Z^{w(\lambda)}H_{w} = \sum_{\lambda \in Y} c_{\lambda} Z^{\lambda}H_{w},$$ hence $z$ is in $R\llbracket Y\rrbracket^{W^{v}}$, which ends the proof.
\end{proof}

As a consequence of Theorem \ref{ThmCentreIHcompletee}, we get a description of the center of the usual Iwahori-Hecke algebra $\HH$. Note that the proof relies on a characterization of finite $W^{v}$-orbits in $\A$ that will be proven independently at the end of Section \ref{secAlgebres de Hecke generales} (see Corollary~\ref{corOrbites infinies}).

Before we state the result, let us recall some notations. If $A_{1}, \ldots, A_{r}$ denote the indecomposable components of the Kac-Moody matrix $A$, we let $J^{f}$ be the set of all $j \in \llbracket 1,r \rrbracket$ such that $A_{j}$ is of finite type \cite[Th.\,4.3]{kac1994infinite} and $J^{\infty}$ be the complement of $J^{f}$ in $\llbracket 1, r\rrbracket$. Set $\A^{f}:= \bigoplus_{j\in J^{f}} \A_{j}$, let $\Phi_{j}$ be the root system of $\A_{j}$ and \hbox{$\A_{j,in}:= \bigcap_{\phi\in \Phi_j}\ker \phi$ for all $j \in J^{f}$}. Finally, set $Y^{f}:= Y \cap \A^{f}$, $\A^{\infty}_\mathrm{in}:= \bigoplus_{j\in J^\infty} \A_{j,in}$ and $Y^{\infty}_\mathrm{in}:= Y \cap A^{\infty}_\mathrm{in}$.

\begin{lemme}
\label{lemCentre IH usuel}
We have $\mathcal{Z}(\HH) = \mathcal{Z}(\hat{\HH}) \cap \HH = R[Y^{f}\oplus Y^{\infty}_\mathrm{in}]$.
\end{lemme}
\begin{proof}
Any $a \in \mathcal{Z}(\HH)$ is in $\HH$ and satisfies $a*Z^{\lambda}H_{w} = Z^{\lambda}H_{w}*a$ for all $(\lambda,w) \in Y \times W^{v}$, hence belongs to $\mathcal{Z}(\hat{\HH})$ by Theorem~\ref{Thm Convolution pour les familles sommables}. As the other inclusion is clear, we already have that $\mathcal{Z}(\HH) = \mathcal{Z}(\hat{\HH}) \cap \HH$. By Theorem~\ref{ThmCentreIHcompletee}, we get that $\mathcal{Z}(\HH) = \HH \cap R\llbracket Y\rrbracket^{W^v}$, and Corollary~\ref{corOrbites infinies} now implies that this intersection is reduced to $ R[Y^{f}\oplus Y^{\infty}_\mathrm{in}]$, which ends the proof.
\end{proof}

\begin{rque}
\label{RemModule sur le centre de HH completee}
When $W^{v}$ is finite, it is well-known that $\HH$ is a finitely generated $\mathcal{Z}(\HH)$-module, and it is natural to wonder whether the corresponding statement holds in the infinite case. Unfortunately, when $W^{v}$ is infinite, $\hat{\HH}$ is not of finite type over $\mathcal{Z}(\hat{\HH})$. Indeed, let $J$ be any finite set and pick any finite family $(h_{j})_{j \in J} \in \hat{\HH}^{J}$. For all $(z_{j})_{j \in J} \in \mathcal{Z}(\hat{\HH})^{J}$, we have $$ \supp _{W^v} \biggl(\sum_{j\in J} z_{j} h_{j}\biggr)\subset \bigcup_{j\in J} \supp_{W^{v}} (h_{j})\subsetneq W^{v},$$ hence $(h_{j})_{j \in J}$ cannot span $\hat{\HH}$ over $\mathcal{Z}(\hat{\HH})$.
\end{rque}

\subsection{Some further remarks}
\subsubsection{The special case of reductive groups}
Assume in this paragraph that $G$ is reductive, in which case $\T = \AAA$ and $Y = Y^{+}$. Then almost finite sets as defined in \cite{gaussent2014spherical} are finite sets: indeed, the Kac-Moody matrix $A$ is in this case a Cartan matrix, hence it satisfies condition (FIN) in \cite[Th.\,4.3]{kac1994infinite}. In particular, $Y^{++}$ is contained in $Q^{\vee}_{+} \oplus \A_\mathrm{in}$, so to be a subset of some $ \bigl(\bigcup_{i = 1}^{k} (y_{i} - Q^{\vee}_{+})\bigr) \cap Y^{++}$ amounts to be finite. Though the algebra $\hat{\HH}$ is still different from $\HH$, as $ \sum_{\mu \in Q^{\vee}_{+}} Z^{-\mu}$ is for instance an element of $\hat{\HH}$ that is not in $\HH$, they both have the same center. Indeed, we have the following result.
\begin{prop}
\label{propCNS egalite elements W^v invariants presque finis ou finis}
Let $R$ be any ring. Then $R\llbracket Y\rrbracket^{W^{v}}=R[Y]^{W^{v}}$ if and only if $W^{v}$ is finite.
\end{prop}
\begin{proof}
If $W^{v}$ is infinite, then for any $y \in Y \cap C^{v}_{f}$, the element $ \sum_{w \in W^{v}} e^{w\cdot y}$ belongs to $R\llbracket Y\rrbracket^{W^{v}}$ but not to $R[Y]^{W^{v}}$, hence $R\llbracket Y\rrbracket^{W^{v}}\not=R[Y]^{W^{v}}$.

If $W^{v}$ is finite, let $w_{0}$ be the longest element of $W^{v}$. By \cite[\S 1.8]{humphreys1992reflection}, we know that $w_{0}.Q^{\vee}_{+} = Q^{\vee}_{-}$. If $E \subset Y$ is almost finite, there is some finite set $J$ and a family $(y_{j})_{j \in J} \in Y^{J}$ such that $E$ is contained in $ \bigcup_{j \in J} (y_{j} - Q^{\vee}_{+})$. If E is furthermore $W^{v}$-invariant, then $E = w_{0}\cdot E$ is also contained in $ \bigcup_{j\in J} (w_{0}\cdot y_{j}+Q^{\vee}_{+})$, hence any element $x \in E$ satisfies $w_{0}\cdot y_{j} \leq_{Q^{\vee}} x \leq_{Q^{\vee}} y_{j'}$ for some $j,j' \in J$. This implies that $E$ is finite and completes the proof.
\end{proof}
Using \cite[Th.\,8.1]{lusztig1983singularities}, Theorem~\ref{ThmCentreIHcompletee} and Lemma~\ref{lemCentre IH usuel}, we get from Proposition~\ref{propCNS egalite elements W^v invariants presque finis ou finis} that when $W^{v}$ is finite, we have:
\[\mathcal{Z}(\hat{\HH})=R[Y]^{W^{v}}=\mathcal{Z}(\HH).\]

\subsubsection{Iwahori-Hecke algebras and $K_{I}$ double cosets}
In the completion process we used to define $\hat{\HH}$, we used the Bernstein-Lusztig relations of $\HH$. However, the Iwahori-Hecke algebra $\HH$ is initially defined in a different way, namely as a convolution algebra of $K_{I}$-bi-invariant functions. In particular, a natural basis of $\HH$ is given by characteristic functions of $K_{I}$ double cosets, and the Bernstein-Lusztig presentation comes afterwards. This leads naturally to address the following question: can we see the completed algebra at the level of $K_{I}$ double cosets, as it is the case for the spherical Hecke algebra?

Fix a ring $\RR$ as before and let $\mathcal{C}_{0}$ be the set of positive type $0$ chambers. Set $W^{+}:= W^{v} \ltimes Y^{+}$ and let $d^{W}$ be the distance defined in \cite{bardy2016iwahori} (see also Section~\ref{subsecDistances associees aux faces de type $0$} below). Recall that $\HH$ is isomorphic to $ \bigoplus_{\wt \in W^{+}} \RR T_{\wt}$ for the product defined by $T_{\wt} * T_{\vt} = \sum_{\ut \in W^{+}} a^{\ut}_{\wt, \vt}$ for all elements $\wt, \vt \in W^{+}$, provided that we set, for all $\ut \in W^{+}$,
\[a^{\ut}_{\wt, \vt}:= \bigl|\{C\in \mathcal{C}_{0} \mid C_{0}^{+}\leq C\leq \ut\cdot C_{0}^{+},\ d^{W}(C_{0}^{+},C)=\wt\mathrm{\ and\ }d^{W}(C,\ut\cdot C_{0}^{+})=\vt\}\bigr|.\]
For $x = (x_{\wt})_{\wt \in W^{+}} \in \RR^{W^{+}}$, also write $x = \sum_{\wt \in W^{+}} x_{\wt} T_{\wt}$. For now, we do not know whether it is possible to endow $\RR^{W^{+}}$, or some subspace $\overline{\HH} \subset \RR^{W^{+}}$ containing $\HH$, with a product that extends the convolution product of $\HH$. At least in general, it seems difficult to embed $\widehat{\HH}$ into $\RR^{W^{+}}$. Indeed, assume for instance that $\RR = \C$ and let $\pi: \C^{W^{+}} \to \C$ be the map defined by
\[ \pi\biggl(\sum_{\wt \in W^{+}} x_{\wt}T_{\wt}\biggr):= x_{1} \ \text{ for all } x = (x_{\wt})_{\wt \in W^{+}} \in \C^{W^{+}}.
\]
(Here, $1$ denotes the identity element in $W^{+}$.) When $G$ is reductive, we know for instance by \cite[Cor.\,1.9]{opdam2003generating} that for any $\lambda \in -Q^{\vee}$, $\pi(Z^{\lambda})$ is a positive real number, which makes it apparently hard to consider $ \sum_{\lambda\in -Q^\vee} \frac{1}{\pi(Z^\lambda)}Z^{\lambda} \in \widehat{\HH}$ as an element of $\C^{W^{+}}$. In the non-reductive case, we do not know so far whether an analogue of \cite[Cor.\,1.9]{opdam2003generating} is true.

\section{Hecke algebra associated with a parahoric subgroup}
\label{secAlgebres de Hecke generales}
The goal of this section is to attach a Hecke algebra to other subgroups than~$K_{s}$ or~$K_{I}$, by generalizing previous constructions of \cite{braverman2016iwahori}
and \cite{bardy2016iwahori} for the Iwahori subgroup $K_{I}$. Our motivation comes from the reductive case, where Hecke algebras can be associated with any open compact subgroup (see Section \ref{subsecMotivationHeckeFaceType0} below). When $G$ is not reductive, we know from Theorem \ref{Thm Pas de topologie ouverte compacte pour les fixateurs} that there is no reasonable topology on $G$, hence we cannot define ``open compact'' in our context. Nevertheless, there is still a notion of \textbf{special parahoric subgroup}, defined as the fixer of a type $0$ face of the masure $\I$.

Given a special parahoric subgroup $K = K_{F}$ that fixes a spherical type $0$ face $F$ satisfying $F_{0} \subset \overline{F} \subset \overline{C^{+}_{0}}$, we will generalize the construction done for $K_{I}$ by Bardy-Panse, Gaussent and Rousseau \cite{gaussent2014spherical, bardy2016iwahori} to build a Hecke algebra associated with $K_{F}$. This requires some finiteness results that fails anytime $F \not= F_{0}$ is not spherical (see Section \ref{subsecCasnonspherique}).

\subsection{Motivation from the reductive case}
\label{subsecMotivationHeckeFaceType0}
To motivate our definition in the Kac-Moody case (see Definition \ref{DefAlgebre de Hecke pour face spherique type $0$}), we start by recalling the classical setting for reductive groups. This section follows \cite[I.3.3]{vigneras1996representations}, though the idea of considering Hecke algebras as spaces of bi-invariant functions goes back at least to Weil and Shimura \cite{shimura1959integrales}, and to Iwahori \cite{iwahori1964hecke} and Iwahori-Matsumoto \cite{iwahori1965bruhat}.

Assume that $G$ is reductive, in which case it is naturally endowed with a structure of topological group induced by the topology of $\mathcal{K}$. For any open compact subgroup~$K$ of~$G$, let $\Z_{c}(G/K)$ be the space of compactly supported functions $G \to \Z$ that are $K$-invariant under right multiplication. Define an action of $G$ on this space by setting $g\cdot f:=[ x \mapsto f(g\cdot x)]$ for all pairs $(g,f) \in G \times \Z_{c}(G/K)$. The algebra $H(G,K):= \mathrm{End}_{G}(\Z_{c}(G/K))$ of $G$-equivariant endomorphisms of $\Z_{c}(G/K)$ is called the \textbf{Hecke algebra of $G$ relative to $K$}. If $\Z_{c}(G//K)$ is the ring of compactly supported functions $G \to \Z$ that are $K$-bi-invariant (for left and right multiplications), then we have a natural isomorphism of algebras $\Upsilon: H(G,K) \to \Z_{c}(G//K)$ given by $\Upsilon(\phi):= \phi(\mathds{1}_{K})$ for all $\phi \in H(G,K)$. This shows that $H(G,K)$ is a free $\Z$-algebra with canonical basis $\{e_{g} = \mathds{1}_{KgK}, \ g \in K \backslash G / K \}$. The product of two elements $e_{g}, e_{g'}$ of this basis is given~by
\begin{equation}
\label{FormuleProduitConvolutionAlgebreHecke}
e_{g}.e_{g'} = \hspace*{-2mm}\sum_{g'' \in K \backslash G / K}\hspace*{-2mm} m(g,g'; g'') e_{g''} \ \text{ with } m(g,g';g''):=|(KgK\cap g''Kg'^{-1} K)/K|.
\end{equation}
(Note that the non-vanishing of $m(g,g';g'')$ implies that $Kg''K$ is contained in $KgKg'K$.)

Extension of scalars works as follows: for any commutative ring $R$, the algebra $H_{R}(G,K):= H(G,K) \otimes_{\Z} R$ is called the \textbf{Hecke algebra of $G$ over $R$ relative to $K$}.

When $G$ is not reductive, we will replace open compact subgroups (that are not defined) by special parahoric subgroups. More precisely, let $K = K_{F}$ be the fixer in $G$ of a type $0$ face $F$ that satisfies $F_{0} \subset \overline{F} \subset \overline{C}^{+}_{0}$. Following what is done in \cite{bardy2016iwahori} in the Iwahori case, we will see $(KgK\cap g''Kg'^{-1} K)/K$ as intersection of ``spheres'' in~$\I$ and prove that this intersection is finite when $F$ is spherical (see Lemma~\ref{lemFinitude des parametres de l'algebre}) but infinite when $F \not= F_{0}$ is not spherical (see Proposition~\ref{propNon definition du produit pour les non-spheriques}). Hence for $F$ spherical, we will be able to define the Hecke algebra $\FHH$ associated with $K_{F}$ as the free $\Z$\nobreakdash-module with 
basis $\{e_{g} = \mathds{1}_{KgK}, \ g \in K \backslash G^{+} / K \}$, where $G^{+}:= \{g \in G \mid g\cdot0 \geq 0\}$, equipped with the convolution product given by the analogue of formula \eqref{FormuleProduitConvolutionAlgebreHecke} with $g'' \in G^{+}$. To prove this, we use the fact that these results are already known when $F$ is a type $0$ chamber (by \cite{bardy2016iwahori}), and the finiteness of the number of type $0$ chambers dominating $F$ as above. Note that the use of $G^{+}$ instead of $G$ in the definition of $\FHH$ is related to the fact that two points of $\I$ do not always lie in a same apartment. This change of group already shows up in the spherical and the Iwahori cases (see \cite{bardy2016iwahori, braverman2011spherical, braverman2016iwahori, gaussent2014spherical}).

From now on, we fix a type $0$ face $F$ that satisfy $F_{0} \subset \overline{F} \subset \overline{C^{+}_{0}}$. We denote by $K = K_{F}$ its fixer in $G$ and by $W_{F}$ its pointwise fixer in $W^{v}$. Then $F$ is \textbf{spherical} when~$W_{F}$ is finite. We also let $\Delta_{F} = G\cdot F$ be its orbit under the action of $G$ on $\I$. Note that we have a bijection $\Upsilon_{F}: G/K \to \Delta_{F}$ that maps $g\cdot K_{F}$ to $g\cdot F$.

\subsection{Distance and spheres associated with a type $0$ face}
\label{subsecDistances associees aux faces de type $0$}
In this section, we define an ``$F$-distance'' (or ``$W_{F}$-distance'') that generalizes the $W^{v}$\nobreakdash-distance introduced in \cite{gaussent2014spherical} and the $W$-distance defined in \cite{bardy2016iwahori}.

If $A$ (resp. $A'$) is an apartment of $\I$ and if $E_{1}, \ldots, E_{k}$ (resp. $E'_{1}, \ldots, E'_{k}$) are subsets or filters of $A$ (resp. $A'$), we denote by $\phi: (A,E_{1}, \ldots, E_{k}) \to (A', E'_{1}, \ldots, E'_{k})$ any isomorphism of apartment $\phi: A \to A'$ induced by some element of $G$ and such that: $\forall i \in \llbracket 1,k\rrbracket$, $\phi(E_{i}) = E'_{i}$.
When we do not want to precise which apartments $A$ and $A'$ are chosen, we simply write $\phi:(E_{1},\ldots,E_{k})\to (E'_{1},\ldots, E'_{k})$.

We define a relation $\leq$ on $\Delta_{F}$ as follows: for $F_{1}, F_{2}$ in $\Delta_{F}$, we write $F_{1} \leq F_{2}$ when $a_{1} \leq a_{2}$, where $a_{i}$ denotes the vertex of $F_{i}$ for $i \in \{1,2\}$. We then set
\[ \Delta_{F}\times_{\leq} \Delta_{F}:= \{(F_{1},F_{2})\in \Delta_{F}^{2} \mid F_{1} \leq F_{2} \}.\]
For any $F' \in \Delta_{F} \cap \A$, we set $[F']:= W_{F}\cdot F'$.

\begin{prop}
\label{propDefinition de la F-distance}
For all $(F_{1}, F_{2}) \in \Delta_{F} \times_{\leq} \Delta_{F}$, there exists an apartment $A$ containing $F_{1}$ and $F_{2}$, and an isomorphism $\phi: (A, F_{1}) \to (\A, F)$. Moreover, $d^{F}(F_{1}, F_{2}):= [\phi(F_{2})]$ only depends on the pair $(F_{1},F_{2})$.
\end{prop}
\begin{proof}
Given $(F_{1},F_{2}) \in \Delta_{F} \times_{\leq} \Delta_{F}$, the existence of an apartment $A$ containing~$F_{1}$ and~$F_{2}$ comes from \cite[Prop.\,5.1]{rousseau2011masures}. By construction, there is some $g \in G$ such that $F_{1} = g\cdot F$. Let $A':= g\cdot\A$: by (MA2), there exists an isomorphism \hbox{$\psi: (A,F_{1}) \to (A', F_{1})$}. Set $\psi':= g_{|\A}^{|A'}$: then $\phi:= \psi'^{-1} \circ \psi$ has the required properties.

Let now $A_{1}$ be another apartment containing $F_{1}$ and $F_{2}$ and $\phi_{1}: (A_{1}, F_{1}) \to (\A, F)$ be another suitable isomorphism. By \cite[Th.\,5.18]{hebert2017convexity}, there exists an isomorphism $f: (A, F_{1}, F_{2}) \to (A_{1}, F_{1}, F_{2})$. We hence have the following commutative diagram:
\[\xymatrix{ (\A,F_1,F_2)\ar[d]^{\phi}\ar[r]^{f} & (A_1,F_1,F_2)\ar[d]^{\phi_1}\\ (\A,F,\phi(F_2))\ar[r] & (\A,F, \phi_1(F_2)),}
\]
with the lower horizontal arrow that is induced by an element of $W_{F}$, hence $[\phi(F_{2})] = [\phi_{1}(F_{2})]$ does not depend on any choice and the proof is complete.
\end{proof}

\begin{rque}
\label{rqueModification preuve distance dans cas particuliers}
Proposition \ref{propDefinition de la F-distance} does not require $F$ to be spherical, thought it is the most important case for us. When $F$ is spherical, one can use \cite[Prop.\,1.10c)]{bardy2016iwahori} instead of \cite[Th.\,5.18]{hebert2017convexity}. Note that in the sequel, we will only use the $F$-distance attached to a non-spherical face for pairs of type $0$ faces based at the same vertex. In this special case, \cite[Th.\,5.18]{hebert2017convexity} could be replace by \cite[Prop.\,5.2]{rousseau2011masures}.
\end{rque}

\begin{rque}
\label{rqueComparaison aux autres distances vectorielles}
When $F = F_{0}$, we can identify $d^{F}$ with the ``vectorial distance'' $d^{v}$ of \cite{gaussent2014spherical} through the usual bijections $\Delta_{F_{0}} \simeq G\cdot0$ and $Y^{++} \simeq Y^{+}/W^{v}$.

When $F = C^{+}_{0}$, we have $W_{C^{+}_{0}} = \{1\}$, hence $[C] = \{C\}$ for all chamber $C \in \Delta_{C^{+}_{0}}$ and $d^{C^{+}_{0}}$ can be identified with the distance $d^{W}$ of \cite{bardy2016iwahori}, provided that each element $w$ of $W^{v} \ltimes Y^{+}$ is identified with the type $0$ chamber $w\cdot C^{+}_{0}$.
\end{rque}

Set $$\Delta_{\geq F}^{\A}:= \left\{ E \in \Delta_{F} \cap \A \mid E \geq F\right\}\quad\text{and}\quad[\Delta_{F}]:=\{ [F'], F' \in \Delta_{\geq F}^{\A}\}.$$ Moreover, for any pair $(E, [R]) \in \Delta_{F} \times [\Delta_{F}]$, set
\begin{align*}
\mathcal{S}^{F}(E,[R])&:= \{E' \in \Delta_{F} \mid E' \geq E \ \text{ and } d^{F}(E,E') = [R]\},\\
\mathcal{S}_{\opp}^{F}(E,[R])&:= \{E' \in \Delta_{F} \mid E' \leq E,\text{ and } d^{F}(E,E') = [R]\}.
\end{align*}

For any $E \in \Delta_{\geq F}^{\A}$, we choose some $g_{E} \in N$ such that $E = g_{E}\cdot F$. Such an element exists: indeed, let $g \in G$ be such that $E = g\cdot F$ and set $A:= g\cdot\A$. By (MA2) and \cite[2.2.1)]{rousseau2011masures}, we get an isomorphism $\phi: (A, g\cdot F) \to (\A, g\cdot F)$. Letting $\psi=g_{|\A}^{|A}$, we have $\phi \circ \psi \in N$ such that $(\phi \circ \psi)(F) = \phi(E) = E$, hence $\phi \circ \psi$ comes from an element~$g_{E}$ as required.

\begin{lemme}
\label{lmDescription des spheres en termes de doubles classes}
For all $[R] \in [\Delta_{F}]$, we have
\[ \Upsilon_{F}^{-1}(\mathcal{S}^{F}(F,[R])) = K_{F} g_{R} K_{F}/ K_{F} \quad \text{and} \quad \Upsilon_{F}^{-1}(\mathcal{S}^{F}_{\opp}(F,[R]))=K_{F} g_{R}^{-1} K_{F}/ K_{F}.
\]
\end{lemme}
\begin{proof}
For any $E \in \mathcal{S}^{F}(F,[R])$, there exists some $g \in K_{F}$ such that $g\cdot E = R = g_{R}\cdot F$, hence $\Upsilon_{F}^{-1}(E)$ belongs to $K_{F} g_{R} K_{F}/K_{F}$. Now let $x = k_{1}g_{R}k_{2}$ be an element of $K_{F}g_{R}K_{F}$: then we have $\Upsilon_{F}(x):= \Upsilon_{F}(k_{1}g_{R}) = k_{1}g_{R}\cdot F = k_{1}\cdot R$. As $d^{F}$ is $G$-invariant, we obtain that
\[d^{F}(k_{1}\cdot F, k_{1}\cdot R) = d^{F}(F,R) = d^{F}(F, k_{1}\cdot R),
\]
hence $x$ is in $\Upsilon^{-1}_{F}(S^{F}(F,[R]))$ and the proof of the first equality is complete. The proof of the second equality is similar and left to the reader.
\end{proof}

\subsection{Hecke algebra associated with a spherical type $0$ face}
Let $C$ and $C'$ be two positive type $0$ chambers base at some common vertex $x \in \I_{0}:= G\cdot0$. We can (and will) identify $W$ with the set of type $0$ chambers of $\A$ whose vertex lies in $Y^{+}$. Thus $d^{W}(C,C') = d^{C^{+}_{0}}(C,C')$ is in $W^{v}$ and we can set $d(C,C'):= \ell(d^{W}(C,C'))$.
\begin{lemme}
\label{lemFinitude des boules de chambres}
Let $C$ be a positive type $0$ chamber of $\I$ and let $x$ be its vertex. For any integer $n \geq 0$, the set $B_{n}(C)$ of all positive type $0$ chambers $C'$ of $\I$ based at $x$ and such that $d(C,C') \leq n$ is a finite set.
\end{lemme}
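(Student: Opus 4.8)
The statement asserts that for a positive type $0$ chamber $C$ with vertex $x$, the set $B_n(C)$ of positive type $0$ chambers based at $x$ within Weyl-length $n$ of $C$ is finite. The natural strategy is induction on $n$. The base case $n=0$ is clear: $d(C,C')=0$ forces $d^W(C,C')=1$, hence $C'=C$, so $B_0(C)=\{C\}$. For the inductive step, I would decompose $B_n(C)$ into $B_{n-1}(C)$ together with the set of chambers $C'$ at distance exactly $n$, and reduce to the following claim: for a fixed chamber $C''$ based at $x$, the set of type $0$ chambers $C'$ based at $x$ that are \emph{adjacent} to $C''$ (sharing a panel, i.e.\ $d(C'',C')=1$) is finite. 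Given that claim, every $C'\in B_n(C)$ with $d(C,C')=n$ admits a minimal gallery from $C$, so it is adjacent to some chamber in $B_{n-1}(C)$, and $B_n(C)\subset \bigcup_{C''\in B_{n-1}(C)}\{C'\,|\,d(C'',C')\le 1\}$ is a finite union of finite sets.

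So the crux is the adjacency claim. Fix $C''$ based at $x$ and a panel $P$ of $C''$; I want to bound the number of type $0$ chambers containing $P$. This is exactly where the \emph{finite thickness} hypothesis on $\I$ enters: each panel is contained in only finitely many chambers (between $3$ and some finite bound — and for the Kac-Moody example, $1+q_i$ chambers, with the count depending only on the type of the panel via Proposition 2.9 of \cite{rousseau2011masures} and Lemma 3.2 of \cite{hebert2016distances}, the reasoning already invoked in Section~\ref{secRestriction topologique}). Among those finitely many chambers, at most finitely many are type $0$ and based at $x$. Since $C''$ has only finitely many panels (it has dimension $\dim\A$, so $\dim\A$ panels locally, or rather $|I|$ panels up to the local structure), taking the union over all panels $P$ of $C''$ gives a finite set of chambers adjacent to $C''$. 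This closes the induction.

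**Main obstacle.** The delicate point is purely local-to-global bookkeeping: a priori two chambers based at $x$ and sharing a panel could fail to lie in a common apartment, so I need to be careful that ``adjacent'' (sharing a local panel) still lets me count using the thickness axiom, which is phrased apartment-by-apartment. The clean way around this is to work entirely with \emph{local} faces based at the single vertex $x$: local panels of $C''$ at $x$ are spherical of codimension $1$, each is contained in finitely many local chambers by finite thickness (this is genuinely local, as the thickness count is about the residue at a panel), and type $0$ + based at $x$ is automatically preserved. A second minor point is ensuring the gallery argument is legitimate: I need that $d(C,C')=n>0$ implies the existence of a chamber $C''$ with $d(C,C'')=n-1$ and $d(C'',C')=1$. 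This should follow from the properties of the $W^v$-valued distance $d^W=d^{C_0^+}$ recorded in \cite{bardy2016iwahori} (it is a ``chamber distance'' in the Coxeter-complex sense, so minimal galleries exist and have length $\ell(d^W(C,C'))$), combined with the fact that $d^W(C,C'')$ for $C''$ on a minimal gallery is $d^W(C,C')$ with the last simple reflection removed. I would cite the relevant properties of $d^W$ from \cite{bardy2016iwahori} rather than reprove them.
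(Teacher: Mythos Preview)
Your proposal is correct and follows essentially the same approach as the paper: induction on $n$, with the inductive step showing $B_{n}(C)\subset\bigcup_{C''\in B_{n-1}(C)}B_1(C'')$ and $B_1(C'')$ finite by finite thickness. The only minor difference is that the paper bypasses your ``main obstacle'' entirely: rather than invoking gallery properties of $d^W$ from \cite{bardy2016iwahori}, it takes $C'\in B_n(C)$, chooses a common apartment $A$ of $C$ and $C'$ (Proposition~5.1 of \cite{rousseau2011masures}), identifies $A$ with $\A$ so that $C=C_0^+$ and $C'=w.C_0^+$ with $\ell(w)=n$, and sets $\tilde C=\tilde w.C_0^+$ for any $\tilde w$ with $\ell(\tilde w)=n-1$ and $\ell(\tilde w^{-1}w)=1$; then $\tilde C\in B_{n-1}(C)$ and $C'\in B_1(\tilde C)$ automatically, all inside $A$, so no local-to-global issue arises.
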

\begin{proof}
The argument goes by induction on $n \geq 0$, noticing that $B_{n}(C)$ contains $B_{m}(C)$ each time we have $n \geq m \geq 0$. As $\I$ is of finite thickness, the set $B_{1}(E)$ is finite for all $E \in G\cdot C^{+}_{0}$. Now let $n \geq 0$ be such that $B_{n}(E)$ is finite for all $E \in G\cdot C^{+}_{0}$ and take $C' \in B_{n+1}(C)$. By \cite[Prop.\,5.1]{rousseau2011masures}, we can choose an apartment $A$ that contains $C$ and $C'$. Let $\phi: (A,C) \to (\A,C^{+}_{0})$ be an isomorphism of apartments: then we have $\phi(C') = w\cdot C^{+}_{0}$ for some $w \in W^{v}$ of length at most $n+1$. We can assume that $\ell(w) = n+1$, otherwise $C'$ is in $B_{n}(C)$ and there is nothing more to do. In this case, let $\tilde{w} \in W^{v}$ be such that $\ell(\tilde{w}) = n$ and $d(\tilde{w}\cdot C_{0}^{+}, \phi(C'))=1$. Then $\tilde{C}:= \phi^{-1}(\tilde{w}\cdot C^{+}_{0})$ satisfies $d(C', \tilde{C}) = 1$, hence $C'$ belongs to $ \bigcup_{C'' \in B_{n}(C)}B_{1}(C'')$, which is a finite set, and the proof is complete.
\end{proof}

\subsubsection{Type of a type $0$ face}
Let $\F^{v}_{\A}$ be the set of all positive vectorial faces of $\A$ and $\F^{0}_{\A}$ be the set of all positive type $0$ faces of $\A$ based at $0$.
\begin{lemme}
\label{lemBijection entre les faces locales et vectorielles}
The map $f: \F^{v}_{\A} \to \F^{0}_{\A}$ that sends $F^{v} \in \F^{v}$ to $F^{\ell}(0,F^{v}) \in \F^{0}_{\A}$ is bijective.
\end{lemme}
\begin{proof}
The definition of local faces ensures that $f$ is well-defined and surjective. Now let $F^{v}_{1}, F^{v}_{2}$ be two distinct elements of $F^{v}_{\A}$. As $0$ is special, we have $F_{i}^{v} \in f(F_{i}^{v})$ for $i \in \{1,2\}$. But $F^{v}_{1} \cap F^{v}_{2} = \emptyset$ implies that $f(F^{v}_{1}) \not= f(F^{v}_{2})$ (for otherwise, we would have $\emptyset \in f(F^{v}_{1})$, which does not make sense) and $f$ is thus injective, which ends the proof.
\end{proof}

For any positive type $0$ face $F'$ of $\I$, there is some type $0$ face $F_{1} \leq C^{+}_{0}$ and some $g_{1} \in G$ such that $F'= g_{1}\cdot F_{1}$. The set $J \subset I$ such that $F_{1} = F^{\ell}(0, F^{v}(J))$ is called the \textbf{type of $F'$} and denoted by $\tau(F')$. This notion is well-defined: indeed, if we also have $F' = g_{2}F^{\ell}(0, F^{v}(J'))$ for some $g_{2} \in G$ and $J' \subset I$, then $g:= g_{2}^{-1}g_{1}$ is such that $F^{\ell}(0, F^{v}(J)) = g\cdot F^{\ell}(0, F^{v}(J')$. By (MA2) and \cite[2.2.1)]{rousseau2011masures}, we can assume that $g$ lies in $N$, hence $g_{|\A}$ is in $W^{v}$ and Lemma~\ref{lemBijection entre les faces locales et vectorielles} then implies that $F^{v}(J) = F^{v}(J')$. By \cite[1.3]{rousseau2011masures}, this requires that $J = J'$, as wanted.
\begin{rque}
\label{rqueInvariance type par G}
The type of a face is invariant under the action of $G$. Also note that for any type $0$ chamber $C$ and any subset $J$ of $I$, there exists exactly one sub-face of~$C$ with type $J$.
\end{rque}

\subsubsection{Finiteness results for spherical type $0$ faces}
\textbf{From now on, we assume that the face $F$ is furthermore spherical}.
\begin{lemme}
\label{lemFinitude du nombre de chambres contenant une face}
For all $F' \in \Delta_{F}$, the set $\mathcal{C}_{F'}$ of all type $0$ chambers of $\I$ containing~$F'$ is finite.
\end{lemme}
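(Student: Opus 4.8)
The plan is to reduce everything to the standard apartment $\A$ and then to invoke the finiteness of balls of chambers, Lemma~\ref{lemFinitude des boules de chambres}. Write $F'=g.F$ with $g\in G$; since $g$ acts on $\I$ by an automorphism preserving the set of type $0$ chambers, it induces a bijection $\mathcal{C}_F\to\mathcal{C}_{F'}$, so it suffices to treat $F$ itself. By hypothesis $F=F^l(0,F^v(J))$ for some $J\subset I$, and $F$ being spherical means precisely that $W_F=W_J:=\langle r_i\mid i\in J\rangle$ is finite; let $w_{0,J}$ be its longest element and $N=l(w_{0,J})<+\infty$. I would first note that any type $0$ chamber $C$ of $\I$ with $F\subset\overline C$ has $0$ as its vertex: picking an apartment $A\supset C$ one gets $F\subset\overline C\subset A$, and inside the apartment $A$ a local face dominated by the local chamber $C$ must share the vertex of $C$. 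Moreover $C_0^+=F^l(0,C^v_f)\in\mathcal{C}_F$ because $F^v(J)\subset\overline{C^v_f}$, so $\mathcal{C}_F\neq\emptyset$; I fix this $C_0^+$.

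The heart of the argument is the inequality $d(C,C')\le N$ for all $C,C'\in\mathcal{C}_F$. Since $C$ and $C'$ are type $0$ chambers based at the same point, Definition/Proposition~\ref{defnprop} (via Proposition~5.1 of \cite{rousseau2011masures}) provides an apartment $A$ containing both and an isomorphism $\phi:(A,C)\mapsto(\A,C_0^+)$ induced by $G$; then $F\subset\overline C\subset A$, so $\phi(F)$ is defined. As $\phi$ is type-preserving and sends the vertex of $C$ (which equals the vertex of $F$, i.e. $0$) to the vertex of $C_0^+$, the face $\phi(F)$ is a type $0$ face of $\A$ of type $J$, based at $0$ and dominated by $C_0^+$; by Lemma~\ref{lemBijection entre les faces locales et vectorielles} together with the standard facts that $W^v$ permutes the vectorial faces, that $\overline{C^v_f}$ meets each $W^v$-orbit exactly once, and that $\Stab_{W^v}(F^v(J))=W_J$ (Section~1.3 of \cite{rousseau2011masures}), the only such face is $F$ itself, so $\phi(F)=F$. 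Hence $\phi(C')$ is a type $0$ chamber of $\A$ based at $0$ dominating $F$; the same Coxeter facts show that these are exactly the $F^l(0,u.C^v_f)$ with $u\in W_J$, and that when $J\neq I$ none of them is negative. Therefore $d^W(C,C')=[\phi(C')]$ corresponds to some $u\in W_J$ under the identification used to define $d^W$, so $d(C,C')=l(u)\le N$; and applying the same reasoning to an apartment of $\I$ containing a negative type $0$ chamber dominating $F$ shows that no such chamber exists unless $J=I$.

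Taking $C=C_0^+$ above gives $\mathcal{C}_F\subset B_N(C_0^+)$, which is finite by Lemma~\ref{lemFinitude des boules de chambres}, so $\mathcal{C}_F$ (hence $\mathcal{C}_{F'}$) is finite. The remaining case $J=I$, i.e. $F=F_0$, is degenerate: sphericity then forces $W^v$ to be finite, so $G$ is reductive and $\I$ is a Bruhat--Tits building, and $\mathcal{C}_{F_0}$ is the set of chambers of the (finite, thick of finite thickness) spherical building which is the residue of $F_0$. I expect the main obstacle to be the identification $\phi(F)=F$ and the enumeration of the chambers dominating a spherical vectorial face, that is, keeping the local-face formalism and the Coxeter combinatorics in sync, together with the bookkeeping for negative chambers and for the degenerate reductive case.
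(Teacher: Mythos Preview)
Your proof is correct and follows essentially the same route as the paper: fix a reference chamber, show that any other chamber in $\mathcal{C}_{F'}$ is at $d$-distance bounded by the length of the longest element of the (finite) fixer $W_{F'}$ via the type argument, then invoke Lemma~\ref{lemFinitude des boules de chambres}. The paper works directly with $F'$ rather than reducing to $F$, and does not separate out the negative-chamber or $J=I$ discussions; your extra bookkeeping there is harmless but unnecessary in context.
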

\begin{proof}
Fix a chamber $C \in \mathcal{C}_{F'}$, denote by $x$ its vertex and pick another chamber~$C'$ in $\mathcal{F'}$. By \cite[Prop.\,5.1]{rousseau2011masures}, there exists an apartment $A$ that contains $C$ and $C'$. We identify $A$ with $\A$ and fix the origin of $\A$ at $x$: then there exists $w \in W^{v}$ such that $C' = w\cdot C$. If $J$ denotes the type of $F'$, then $w\cdot F'$ is also a sub-face of type $J$ in $C'$, hence we have $w\cdot F' = F'$ by the unicity property stated in Remark \ref{rqueInvariance type par G}. This means that $w$ belongs to $W_{F'}$, which is a finite group as $F' \in \Delta_{F}$ is spherical (because~$F$~is). In particular, we must have $d(C,C') \leq \max\{\ell(u), \ u \in W_{F'}\}$, which ends the proof by Lemma~\ref{lemFinitude des boules de chambres}.
\end{proof}

\begin{lemme}
\label{lemCaracterisation du rayon}
Let $(E_{1},E_{2})$ and $(E'_{1}, E'_{2})$ be in $\Delta_{F}\times_{\leq}\Delta_{F}$. Then $d^{F}(E_{1}, E_{2}) = d^{F}(E'_{1}, E'_{2})$ iff there exists an isomorphism $\phi: (E_{1}, E_{2}) \to (E'_{1}, E'_{2})$.
\end{lemme}
\begin{proof}
Assume that $d^{F}(E_{1}, E_{2}) = d^{F}(E'_{1}, E'_{2}) =: [R]$. For any choice of $$\psi: (E_{1}, E_{2}) \to (F,R)\quad\text{and}\quad\psi': (E_{1}, E_{2}) \to (F,R),$$ the map $\phi:= \psi'^{-1} \circ \psi$ satisfies the required property.

Conversely, suppose that there exists an isomorphism $\phi:(E_{1},E_{2})\mapsto (E'_{1},E'_{2})$. Pick $R \in d^{F}(E_{1},E_{2})$ and choose $\psi: (E_{1}, E_{2}) \to (F,R)$: then $\phi^{-1} \circ \psi$ is an isomorphism that maps $(E'_{1}, E'_{2})$ to $(F,R)$. By Proposition \ref{propDefinition de la F-distance}, we thus have $d^{F}(E'_{1},E'_{2})=[R]=d^{F}(E_{1},E_{2})$, which ends the proof of the lemma.
\end{proof}

\begin{lemme}
\label{lemChambres contenant des faces a distance fixee}
Let $(F_{1},F_{2})\in \Delta_{F}\times_\leq \Delta_{F}$ and $r:=d^{F}(F_{1},F_{2})$. Let $R \in \Delta_{\geq F}^{\A}$ be such that $r = [R] = W_{F}\cdot R$ and let $\mathcal{C}_{\A}(r)$ be the set of all chambers of $\A$ containing an element of $r$. For any type $0$ chambers $C_{1}$ and $C_{2}$ respectively dominating $F_{1}$ and $F_{2}$, we have $d^{W}(C_{1}, C_{2}) \in \mathcal{C}_{\A}(r)$. Moreover, the set $\mathcal{C}_{\A}(r)$ is finite.
\end{lemme}
\begin{proof}
Pick an apartment $A$ containing $C_{1}$ and $C_{2}$ and an isomorphism $$\phi: (A, C_{1}) \to (\A, C_{0}^{+}).$$ By Remark \ref{rqueInvariance type par G}, $\phi(F_{1})$ is the unique sub-face of type $\tau(F)$ in $C^{+}_{0}$, hence we have $F = \phi(F_{1})$ and $\phi(F_{2})$ belongs to $W_{F}\cdot R = r$, which implies that $d^{W}(C_{1}, C_{2})$ lies in $\mathcal{C}_{\A}(r)$.

Now note that the map sending a positive type $0$ face $F'$ on its type $\tau(F')$ induces a bijection from the set of type $0$ chambers of $\A$ containing $w\cdot R$ onto the fixer $W_{R}$ of $R$ in $W$. As $W_{R}$ is a conjugate of $W_{F}$, it is finite (as $F$ is spherical), hence so is $\mathcal{C}_{\A}(r)$.
\end{proof}

\begin{lemme}
\label{lemFinitude des parametres de l'algebre}
For any pair $(F_{1}, F_{2}) \in \Delta_{F}\times_\leq \Delta_{F}$ and any elements $r_{1}, r_{2} \in [\Delta_{F}]$, the set $\mathcal{S}^{F}(F_{1},r_{1})\cap \mathcal{S}^{F}_{\opp}(F_{2},r_{2})$ is finite and its cardinality only depends on $r_{1}, r_{2}$ and $r:=d^{F}(F_{1}, F_{2})$.
\end{lemme}
\begin{proof}
Denote by $\mathcal{S}$ the set of type $0$ chambers containing an element of $\mathcal{S}^{F}(F_{1},r_{1})\cap \mathcal{S}^{F}_{\opp}(F_{2},r_{2})$ and let $C_{1}$ (resp. $C_{2}$) be a type $0$ chamber that contains $F_{1}$ (resp. $F_{2}$). By Lemma~\ref{lemChambres contenant des faces a distance fixee}, any chamber $C \in \mathcal{S}$ satisfies $d^{W}(C_{1},C) \in \mathcal{C}_{\A}(r_{1})$ and $d^{W}(C,C_{2}) \in \mathcal{C}_{\A}(r_{2})$. This implies that $\mathcal{S}$ is contained in
\[ \bigcup_{(w_{1}, w_{2}) \in \mathcal{C}_\A(r_{1}) \times \mathcal{C}_\A(r_{2})}\hspace*{-2mm} \{C\in \mathcal{C}_{0}^{+} \mid C_{1} \leq C \leq C_{2},\ d^{W}(C_{1},C)=w_{1} \mathrm{\ and\ }d^{W}(C,C_{2})=w_{2}\}.\]
By \cite[Prop.\,2.3]{bardy2016iwahori} and Lemma~\ref{lemChambres contenant des faces a distance fixee}, this inclusion implies the finiteness of $\mathcal{S}$, which itself implies that $\mathcal{S}^{F}(F_{1},r_{1})\cap \mathcal{S}^{F}_{\opp}(F_{2},r_{2})$ is finite.

To prove the independence of the cardinality, assume that $(F'_{1}, F'_{2}) \in \Delta_{F}\times_\leq \Delta_{F}$ is such that $d^{F}(F'_{1}, F'_{2}) = r$. By Lemma~\ref{lemCaracterisation du rayon}, there exists an isomorphism $$\phi: (F_{1}, F_{2}) \to (F'_{1}, F'_{2}).$$ Thus we have $$\mathcal{S}^{F}(F'_{1},r_{1}) \cap \mathcal{S}^{F}_{\opp}(F'_{2},r_{2}) = \phi\left(\mathcal{S}^{F}(F_{1},r_{1})\cap \mathcal{S}^{F}_{\opp}(F_{2},r_{2})\right),$$ which ends the proof.
\end{proof}
Following Lemma \ref{lemFinitude des parametres de l'algebre}, and with the same notations, we set $$a^{r}_{r_{1}, r_{2}}:= \vert \mathcal{S}^{F}(F_{1},r_{1})\cap \mathcal{S}^{F}_{\opp}(F_{2},r_{2})\vert.$$
\begin{lemme}
\label{lemFinitude des supports}
For any elements $r_{1}, r_{2}$ in $[\Delta_{F}]$, the set
\begin{multline*}
P_{r_{1}, r_{2}}:= \bigl\{d^{F}(F_{1},F_{2}), \ (F_{1},F',F_{2})\in \Delta_{F}\times_\leq \Delta_{F}\times_\leq \Delta_{F} \mid\\ d^{F}(F_{1},F')=r_{1} \mathrm{\ and\ }d^{F}(F',F_{2})=r_{2}\bigr\}
\end{multline*}
is finite.
\end{lemme}
\begin{proof}
Denote by $\mathcal{E}$ the set of all triples $(C_{1}, C', C_{2})$ of type $0$ chambers such that there exist sub-faces $F_{1} \subset C_{1}$, $F' \subset C'$ and $F_{2} \subset C_{2}$ of these chambers that satisfy $d^{F}(F_{1},F') = r_{1}$ and $d^{F}(F', F_{2}) = r_{2}$. If $(C_{1}, C', C_{2})$ is in $\mathcal{E}$, then Lemma~\ref{lemChambres contenant des faces a distance fixee} implies that $d^{W}(C_{1},C') \in \mathcal{C}_{\A}(r_{1})$ and $d^{W}(C',C_{2}) \in \mathcal{C}_{\A}(r_{2})$. This proves that $P:= \{d^{W}(C_{1}, C_{2}), \ (C_{1},C',C_{2})\in \mathcal{E}\}$ is contained in $ \bigcup_{(\mathbf{w_{1}}, \mathbf{w_{2}}) \in \mathcal{C}_{\A}(r_{1}) \times \mathcal{C}_{\A}(r_{2})} P_{\mathbf{w_{1}},\mathbf{w_{2}}}$, where the $P_{\mathbf{w_{1}},\mathbf{w_{2}}}$'s are the finite sets introduced in the proof of \cite[Prop.\,2.2]{bardy2016iwahori}. By Lemma \ref{lemChambres contenant des faces a distance fixee}, this proves that $P$ is finite.

Let $(F_{1}, F', F_{2})\!\in\!\Delta_{F}\times_\leq \Delta_{F} \leq \Delta_{F}$ be such that $d^{F}(F_{1},F')\!=\!r_{1}$ and $d^{F}(F', F_{2})\!=\!r_{2}$. Then there is a triple $(C_{1}, C', C_{2}) \in \mathcal{E}$ such that $F_{i}$ is a face of $C_{i}$ for $i \in \{1,2\}$. The distance $d^{F}(F_{1}, F_{2})$ is of the form $W_{F}\cdot F''$ for some face $F''$ of $d^{W}(C_{1}, C_{2})$, hence the lemma follows.
\end{proof}

\subsubsection{Definition of the Hecke algebra $\FHH$}
Let $R$ be a commutative unitary ring.\footnote{Note that we do not require here any of the additional assumptions made on $\RR$ in Section~\ref{secAlgebre d'IH completee}.} Denote by $\FHH = \FHH^{\I}_{R}$ the set of all functions $\varphi: G\backslash\Delta_{F}\times_{\leq} \Delta_{F} \to R$. For any $r \in [\Delta_{F}]$, let $T_{r}: \Delta_{F}\times_{\leq} \Delta_{F} \to R$ be defined as follows (where $\delta_{.,.}$ denotes the Kronecker symbol):
\[ \forall (F_{1}, F_{2}) \in \Delta_{F}\times_{\leq} \Delta_{F}, \ T_{r}(F_{1}, F_{2}):= \delta_{d^{F}(F_{1},F_{2}), r}.\]
One directly checks that $\FHH$ is a free $R$-module with basis $\{T_{r}, \ r \in [\Delta_{F}]\}$.
\begin{thm}
\label{thmDefinition de la convolution parahorique}
Define a product $*: {\FHH} \times {\FHH} \to {\FHH}$ by the following formula:
\[ \forall (\varphi_{1}, \varphi_{2}) \in {\FHH} \times {\FHH}, \quad \varphi_{1} * \varphi_{2}:= \biggl[(F_{1}, F_{2}) \mapsto\hspace*{-3mm} \sum_{F' \in \Delta_{F} \mid F_{1} \leq F' \leq F_{2}}\hspace*{-3mm}\varphi_{1}(F_{1},F')\varphi_{2}(F',F_{2})\biggr].\]
Then the product $*$ is well-defined and endow $\FHH$ with a structure of associative algebra that has $T_{[F]}$ for identity element. Moreover, the product of any two elements of the basis $\{T_{r}, \ r \in [\Delta_{F}]\}$ is given by the following formula:
\[ \forall (r_{1}, r_{2}) \in [\Delta_{F}] \times [\Delta_{F}], \quad T_{r_{1}} * T_{r_{2}} = \sum_{r \in P_{r_{1}, r_{2}}} a^{r}_{r_{1},r_{2}} T_{r}.\]
\end{thm}
\begin{proof}
Lemmas~\ref{lemFinitude des parametres de l'algebre} and \ref{lemFinitude des supports} imply that $*$ is well-defined and give the required formula for $T_{r_{1}}*T_{r_{2}}$ for any $(r_{1}, r_{2}) \in [\Delta_{F}] \times [\Delta_{F}]$. The associativity of $*$ directly comes from the definition, and a direct computation shows that $T_{[F]}$ is the identity element as we have $\mathcal{S}^{F}(F_{1},[F])=\{F_{1}\}$ for all $F_{1}\in \Delta_F$.
\end{proof}

\begin{defn}
\label{DefAlgebre de Hecke pour face spherique type $0$}
The algebra $\FHH={\FHH^{\I}_{R}}$ is called the \textbf{Hecke algebra of $\I$ associated to $F$ (or: to $K_{F}$) over $R$}.
\end{defn}

\begin{rque}
Given $g \in G^{+}$, there exists some element $R_{g} \in \Delta_{\geq F}^{\A}$ such that
\[ \{F'\in K_{F}gK_{F}\cdot F, \ F'\subset \A\} = [R_{g}].
\]
Let $(F_{1}, F_{2}) \in G\backslash \Delta_{F}\times_{\leq} \Delta_{F}$. We can always assume that $F_{1} = F$ and write $F_{2} = g\cdot F$ for some $g \in G$. One easily checks that $f(g):= K_{F}gK_{F}$ only depends on $(F_{1}, F_{2})$, and that the corresponding map $f: G \backslash \Delta_{F} \times_{\leq} \Delta_{F} \to K_{F} \backslash G^{+}/K_{F}$ is bijective. Via~$f$, we can identify $\FHH$ with the set of all functions $K_{F}\backslash G^{+}/K_{F} \to R$. Under this identification, $T_{R_{g}}$ corresponds to $e_{g}:= \mathds{1}_{K_{F}gK_{F}}$ for all $g \in G^{+}$. Moreover, for any $g,g' \in K_{F} \backslash G^{+} /K_{F}$, we have
\[ e_{g}*e_{g'} = \sum_{g'' \in K_{F} \backslash G^{+} /K_{F}} m(g,g'; g'')e_{g''},\]
where $m(g,g';g'') = a^{[R_{g''}]}_{[R_{g}], [R_{g'}]}$ for all $g'' \in K_{F} \backslash G^{+} /K_{F}$. Using Lemmas~\ref{lmDescription des spheres en termes de doubles classes} and \ref{lemFinitude des parametres de l'algebre}, we get that $m(g,g';g'') = |(KgK\cap g''Kg'^{-1} K)/K|$, as in the reductive case (compare with \eqref{FormuleProduitConvolutionAlgebreHecke}).
\end{rque}

\begin{rque}
For now, we do not know whether it is possible to define a completed Hecke algebra $\widehat{\FHH}$ for any spherical face $F$ as above in the similar manner as what we did for the Iwahori-Hecke algebra. To generalize our completion process to this context, one would in particular need an analogue of Bernstein-Lusztig relations for arbitrary $F$.
\end{rque}

\subsection{What about non-spherical type $0$ faces?}
\label{subsecCasnonspherique}
In \cite{gaussent2014spherical}, Gaussent and Rousseau defined the spherical Hecke algebra as a Hecke algebra associated with the non-spherical type $0$ face $F_{0}$, and we noticed in Remark~\ref{rqueComparaison aux autres distances vectorielles} that their distance $d^{v}$ matches with our $d^{F_{0}}$. Consequently, it seems natural to try to associate a Hecke algebra with any type $0$ face $F$ between $F_{0}$ and $C^{+}_{0}$, \textit{i.e} to see whether the extra assumption of being spherical can be suppressed.

In this section, we consider a non-spherical type $0$ face $F$ such that $F_{0}\subsetneq F\subsetneq C_{0}^{+}$. Note that this implies that $A$ is an indefinite Kac-Moody matrix of size $\geq 3$: indeed, when $A$ is of finite type, then any type $0$ face is spherical, and when $A$ is of affine type, the only non-spherical type $0$ face of $C^{+}_{0}$ is $F_{0}$. In this last section, we will prove that the coefficients involved in the definition of the convolution product introduced earlier (see Theorem \ref{thmDefinition de la convolution parahorique}) are now infinite. The proof of this result requires the injectivity of the restriction map that sends $w \in W^{v}$ to $w_{|Q^{\vee}}$. This property is proved in \cite{kac1994infinite} for less general realizations of $A$ than the one we use, hence we will start by extending this property to our framework: this is the point of Lemma \ref{lemRestriction du groupe de Weyl aux copoids} below. 

\subsubsection{Realizations of a Kac-Moody matrix}
Let $A = (a_{i,j})_{i,j\in \llbracket 1,n\rrbracket}$ be a Kac-Moody matrix. Following \cite[Chap.\,1]{kac1994infinite}, we say that a \textit{realization of $A$} is a triple $(\AAA, \Pi, \Pi^{\vee})$ where $\AAA$ denotes an $\R$-vector space,\footnote{Note that in \cite{kac1994infinite}, complex vector spaces are used instead of real vector spaces.} $\Pi = \{ \alpha_{1}, \ldots, \alpha_{n}\}$ a family of $n$ elements in~$\AAA^{*}$ (the dual space of $\AAA$) and $\Pi^{\vee} = \{\alpha_{1}^{\vee}, \ldots, \alpha_{n}^{\vee}\}$ a family of $n$ elements in $\AAA$, such that the following three properties hold.
\begin{enumerate}
\item[(F)]
The elements of $\Pi$ (resp. $\Pi^{\vee}$) are linearly independent in $\AAA^{*}$ (resp. in $\AAA$).
\item[(C)]
For all $i,j \in \llbracket 1,n\rrbracket$, $\alpha_{j}(\alpha_{i}^{\vee}) = a_{i,j}$.
\item[(D)]
We have $n - \mathrm{rk}(A) = \dim_{\R}\AAA - n$.
\end{enumerate}
A \textbf{generalized free realization of $A$} is a triple $(\AAA, \Pi, \Pi^{\vee})$ with $\AAA, \Pi, \Pi^{\vee}$ defined as above but only satisfying properties (F) and (C). Two realizations $(\AAA_{1}, \Pi_{1}, \Pi^{\vee}_{1})$ and $(\AAA_{2}, \Pi_{2}, \Pi_{2}^{\vee})$ are said \textbf{isomorphic} if there exists an isomorphism of vector spaces $\phi: \AAA_{1} \to \AAA_{2}$ such that $\phi^{*}(\Pi_{1}) = \Pi_{2}$ and $\phi(\Pi^{\vee}_{1}) = \Pi_{2}^{\vee}$. We know by \cite[Prop.\,1.1]{kac1994infinite} that up to (non unique in general) isomorphism, $A$ admits a unique realization $(\AAA_{0}, \Pi_{0}, \Pi_{0}^{\vee})$.

Given a generalized free realization $(\AAA, \Pi, \Pi^{\vee})$ of $A$, we let the \textbf{inessential part of~$\AAA$} be the subspace $\AAA_\mathrm{in}:= \bigcap_{i = 1}^{n} \ker \alpha_{i}$. We also set $$Q^{\vee}_{\AAA}:= \bigoplus_{i = 1}^{n} \Z\alpha_{i}^{\vee}\quad\text{and}\quad Q^{\vee}_{\R, \AAA}:= \bigoplus_{i = 1}^{n} \R\alpha_{i}^{\vee}.$$

The next lemma is easy to prove and thus left to the reader.

\begin{lemme}
\label{lemDescription des realisations generales}
For any generalized free realization $\AAA$ of $A$, there exist subspaces $\AAA'\subset \AAA$ and $B\subset \AAA_\mathrm{in}$ such that $\AAA'$ is isomorphic to $\AAA_0$ (as realizations of $A$), $Q^\vee_\AAA \subset \AAA'$ and $\AAA=\AAA'\oplus B$.
\end{lemme}

Let $W^v_{\mathcal{A}}$ be the Weyl group of $\mathcal{A}$, \textit{i.e} the subgroup of $\mathrm{GL}(\mathcal{A})$ generated by the~$r_i$, $i\in I$ (where $r_i:\mathcal{A}\to \mathcal{A}$ sends any $x\in \mathcal{A}$ to $x-\alpha_i(x)\alpha_i^\vee$).

\begin{lemme}
\label{lemRestriction du groupe de Weyl aux copoids}
For any generalized free realization $\AAA$ of $A$, the map $$\left[ w \in W^{v}_{\AAA} \mapsto w_{|Q^{\vee}} \in \mathrm{Aut}_\Z(Q^{\vee}_{\AAA})\right]$$ is injective.
\end{lemme}

\begin{proof}
Write $\AAA=\AAA'\oplus B$ with $\AAA'$ and $B$ as in Lemma~\ref{lemDescription des realisations generales}. For any $x\in \AAA$ and $w\in W^{v}_\AAA$, we have $w(x)-x\in Q^{\vee}_{\R,\AAA}$, hence $\AAA'$ is stable under the action of $W^{v}_{\A}$. Moreover, $W^{v}_{\AAA}$ fixes pointwise $\AAA_\mathrm{in}$, hence the restriction map $W^{v}_\AAA\to W^{v}_{\AAA'}$ is a an isomorphism. As a consequence, we can assume that $\AAA = \AAA_0$. Now apply assertion (3.12.1) of the proof of \cite[Prop.\,3.12]{kac1994infinite} to $\Delta^{\vee}$ instead of $\Delta$: we get that the only $w \in W^{v}_{\AAA_{0}}$ satisfying $w_{|\Delta^{\vee}} = 1$ is $w = 1$. As $\Delta^{\vee}$ is contained in $Q^{\vee}_{\AAA}$, this ends the proof.
\end{proof}

\subsubsection{Infinite intersections of spheres}
From now on, we assume that $F$ is a non-spherical type $0$ face of $\A$ that satisfies $F_{0} \subsetneq F\subsetneq C^{+}_{0}$. Recall that this implies that the fixer $W_{F}$ of $W$ is infinite. Indeed, we can assume that $F$ has $0$ for vertex, which identifies $W_{F}$ with a subgroup of $W^{v}$. Let $F^{v}$ be the vectorial face such that $F = F^{\ell}(0, F^{v})$ and let us prove that $W_{F}$ is also the fixer $W_{F^{v}}$ of $F^{v}$ (which will prove the claim as $F^{v}$ is non spherical, hence $W^{F^{v}}$ is infinite by definition). If~$w \in W_{F}$, let $X \in F$ be fixed by $w$: then $w$ fixes $\R^{*}_{+}.X \supset F^{v}$ hence $w$ is in $W_{F^{v}}$. Conversely, we have $W_{F^{v}} \subset W_{F}$ because $F^{v} \in F$ (as $0$ is special), hence $W_{F} = W_{F^{v}}$ is infinite.
\begin{rque}
\label{rqueCNS pour avoir une orbite infinie}
By \cite[\S 1.3]{rousseau2011masures}, the vectorial faces based at $0$ form a partition of the Tits cone. Therefore, for any vectorial face $F^{v}$, if there exist some $u\in F^v$ and some $w\in W^v$ such that $w\cdot u\in F^{v}$, then $w\cdot F^v=F^v$. Consequently, for any $W'\subset W^{v}$, $W'\cdot F^{v}$ is infinite if and only if $W'\cdot u$ is infinite for some $u\in F^{v}$, if and only if $W'\cdot u$ is infinite for all $u\in F^{v}$.
\end{rque}
The proof of the next proposition uses the \textbf{graph of the matrix $A$}, whose vertices are the elements $i\in I$ and whose arrows are the pairs $\{i,j\}$ such that $a_{i,j}\neq 0$.

\begin{lemme}\label{lemNon definition du produit pour les non-spheriques}
Suppose that the matrix $A$ is indecomposable. For any non-spherical type $0$ face $F$ of $\A$ that satisfies $F_{0} \subsetneq F\subsetneq C^{+}_{0}$, there exists $w\in W^{v}$ such that $W_{F}\cdot w\cdot F$ is infinite.
\end{lemme}
\begin{proof}
Write $F = F^{\ell}(0,F^{v})$ with $$F^{v} = F^v(J)=\{ x \in \A \mid \forall j \in J, \, \alpha_{j}(x) =0 \text{ and } \forall i \not\in J, \alpha_{i}(x) > 0\}$$ for some subset $J$ of $I$. Note that $J\neq I$ as $F_{0}$ is strictly contained in $F$. Let $k \in I$ be such that $W_{F}\cdot\alpha_{k}^{\vee}$ is infinite (such a $k$ exists by Lemma~\ref{lemRestriction du groupe de Weyl aux copoids}). As the graph of $A$ is connected \cite[4.7]{kac1994infinite}, any element $j \in I\setminus J$ can be linked to $k$ via a finite sequence $j = j_{1}, \ldots, j_{\ell} = k$ of elements of $I$ that satisfy $ \prod_{m = 1}^{\ell-1}a_{j_{m}, j_{m+1}} \not= 0$. We fix such a $j$ and such a sequence $j_{1}, \ldots, j_{\ell}$.

Now pick $u \in F^{v}$ and let us show the existence of some $w \in W^{v}$ such that \hbox{$\alpha_{k}(w\cdot u) \neq 0$}. Given $x \in \A$ and $m \in \llbracket 1, \ell \rrbracket$, we say that $x$ satisfies $P_{m}$ when $\alpha_{j_{m}}(x) \not= 0$ and $\alpha_{j_{m'}}(x) = 0$ for all $m'\in \llbracket m+1,l\rrbracket$. If $x \in \A$ satisfies $P_{m}$ for some $m \in \llbracket 1,\ell - 1\rrbracket$, then $x':= r_{j_{m}}(x)$ satisfies $\alpha_{j_{m+1}}(x') = - \alpha_{j_{m}}(x)a_{j_{m}, j_{m+1}} \not= 0$ (recall that $x' = x - \alpha_{j_{m}}(x)\alpha_{j_{m}}^{\vee}$), hence $x'$ satisfies $P_{s}$ for some $s \in \llbracket m+1, \ell \rrbracket$. As $u$ is in $F^{v}$ and $j_{1} = j$ is in $I\setminus J$, we have $\alpha_{j_{1}}(u) >0$, hence $u$ satisfies $P_{m}$ for some $m \in \llbracket 1, \ell\rrbracket$. Replacing $u$ by $x$ in the previous argument and using successive iterations, we finally get some $w \in W^{v}$ such that $w\cdot u$ satisfies $P_{\ell}$, \textit{i.e} such that $\alpha_{k}(w\cdot u) \neq 0$.

We conclude as follows: if $W_{F}\cdot w\cdot u$ is finite, then $W_{F}.r_{k}(w\cdot u) = W_{F}.(u - \alpha_{k}(w\cdot u)\alpha_{k}^{\vee})$ is infinite, hence at least one of the orbits $W_{F}\cdot w\cdot u$ or $W_{F}.r_{k}(w\cdot u)$ is infinite, which implies the required result by Remark~\ref{rqueCNS pour avoir une orbite infinie}.
\end{proof}
Let $A_{1}, \ldots, A_{r}$ be the indecomposable components of the Kac-Moody matrix $A$. For any $i \in \llbracket 1, r \rrbracket$, pick a realization $\AAA_{i}$ of $A_{i}$: then $\A = \AAA_{1} \oplus \ldots \oplus \AAA_{r}$. Also note that $W^{v}$ decomposes as $W^{v} = W^{v}_{1} \times \ldots \times W^{v}_{r}$, where $W^{v}_{i}$ denotes the vectorial Weyl group of $\AAA_{i}$, and that we can decompose any face $F'$ of $\A$ as $F'= F'_{1} \oplus \ldots \oplus F'_{r}$ with $F'_{i} \subset \AAA_{i}$.

\begin{prop}\label{propNon definition du produit pour les non-spheriques}
Let $F' = \bigoplus_{i = 1}^{r} F'_{i}$ be a type $0$ face of $\A$. The following are equivalent.
\begin{enumerate}\renewcommand{\theenumi}{\roman{enumi}}
\item\label{enum:i}
There exists $w \in W^{v}$ such that $W_{F'}\cdot w\cdot F'$ is infinite.
\item\label{enum:ii}
There exists $i \in \llbracket 1,r \rrbracket$ such that $F'_{i}$ is non-spherical and different from $F_{i,0}:= F^{\ell}(0,\AAA_{i, in})$. (Recall that $F_{i,0}$ is the minimal type $0$ face of $\AAA_{i}$ based at $0$.)
\end{enumerate}
\end{prop}

\begin{proof}
The decomposition of $F'$ induces a decomposition of its fixer as $W_{F'} = W_{F'_{1}} \times \ldots \times W_{F'_{r}}$. First assume the existence of some $w \in W^{v}$ such that $W_{F'}\cdot w\cdot F'$ is infinite and decompose $w$ as $w = (w_{1}, \ldots, w_{r})$. Then $$W_{F'}\cdot w\cdot F' = W_{F'_{1}}\cdot w_{1}\cdot F'_{1} \oplus \ldots \oplus W_{F'_{r}}\cdot w_{r}\cdot F'_{r},$$ hence there is (at least) an integer $i \in \llbracket 1, r \rrbracket$ such that $W_{F'_{i}}\cdot w_{i}\cdot F'_{i}$ is infinite. For such an $i$, $F'_{i}$ must be non-spherical (otherwise $W_{F'_{i}}$ would be finite) and different from~$F_{i,0}$ (otherwise $W_{F'_{i}}\cdot w_{i}\cdot F'_{i} = F_{i,0}$). Hence \eqref{enum:i} implies \eqref{enum:ii}. The reverse implication is a consequence of Lemma~\ref{lemNon definition du produit pour les non-spheriques}.
\end{proof}

The next proposition gives a counterexample to Lemma~\ref{lemFinitude des parametres de l'algebre} for non-spherical faces, which explains why we needed this restriction in our construction.
\begin{prop}
\label{propIntersection spheres infinies}
Let $F'$ be a face based at $0$ for which there exists some $w \in W^{v}$ such that $W_{F'}w\cdot F'$ is infinite. Then $\mathcal{S}^{F'}(F',[w\cdot F'])\cap \mathcal{S}_{\opp}^{F'} (F',[w^{-1}\cdot F'])$ is infinite.
\end{prop}
\begin{proof}
It is enough to check that $W_{F'}w\cdot F'$ is contained in $$\mathcal{S}^{F'}(F',[w\cdot F'])\cap \mathcal{S}_{\opp}^{F'} (F',[w^{-1}\cdot F']).$$ Let $E \in W_{F'}\cdot w\cdot F'$ and let $w_{E} \in W_{F'}$ be such that $E= w_{E}\cdot w\cdot F'$. As $w_{E}\cdot F' = F'$, we have $F' \leq E \leq F'$, hence $d^{F'}(F',E) = [E] = [w\cdot F']$ by definition of $d^{F'}$. Now the isomorphism $(w_{E}\cdot w)^{-1}: \A \to \A$ maps $E$ to $F'$ and $F'$ to $w^{-1}\cdot w_{E}^{-1}\cdot F'=w^{-1}\cdot F'$, thus $d^{F'}(E,F') = [w^{-1}\cdot F']$. This shows that $E$ belongs to $$\mathcal{S}^{F'}(F',[w\cdot F'])\cap \mathcal{S}_{\opp}^{F'} (F',[w^{-1}\cdot F']),$$ hence the proposition.
\end{proof}

Recall that the notations $Y^{f}$ and $Y^{\infty}_\mathrm{in}$ used in the next result were introduced in Section \ref{subsubsectionCalcul des centres}.
\begin{cor}
\label{corOrbites infinies}
Let $\lambda \in Y^{+}$. Its $W^{v}$-orbit $W^{v}\cdot\lambda$ is finite iff $\lambda$ belongs to $Y^{f} \oplus Y^{\infty}_\mathrm{in}$.
\end{cor}
\begin{proof}
Given $\lambda \in Y^{+}$, write $\lambda = \sum_{j = 1}^{r} \lambda_{j}$ with $\lambda_{j} \in \AAA_{j}$ for all $j\in \llbracket 1,r\rrbracket$. First assume that $\lambda$ is in $Y^{f} \oplus Y^{\infty}_\mathrm{in}$: then $$W^{v}\cdot\lambda = \bigoplus_{j \in J^{f}} W^{v}_{j}\cdot\lambda_{j} \oplus \bigoplus_{j \in J^{\infty}}\lambda_{j}.$$ As $W^{v}_{j}$ is finite for any $j \in J^{f}$, the finiteness of $W^{v}\cdot\lambda$ follows from its decomposition above and the converse implication is proved.

Now assume that $\lambda$ is not in $Y^{f} \oplus Y^{\infty}_\mathrm{in}$. Let $j \in J^{\infty}$ be such that $\lambda_{j} \not\in \AAA_{j, \mathrm{in}}$ and let $F^{v}_{j}$ be the vectorial face of $\AAA_{j}$ that contains $\lambda_{j}$. By Remark~\ref{rqueCNS pour avoir une orbite infinie}, the map $W^{v}_{j}\cdot F^{v}_{j} \lambda_{j} \to W^{v}_{j}\lambda_{j}$ that sends $w\cdot F^{v}_{j}$ onto $w\cdot \lambda_{j}$ is well-defined and bijective. If $F^{v}_{j}$ is spherical, then its stabilizer is finite and $W^{v}_{j}\cdot F^{v}_{j}$ is thus infinite as $W^{v}_{j}$ is. If $F^{v}_{j}$ is non-spherical, then Lemma~\ref{lemNon definition du produit pour les non-spheriques} produces an element $w_{j} \in W^{v}_{j}$ such that $W_{F_{j}}\cdot w_{j}\cdot F^{v}_{j}$ is infinite, where $W_{F_{j}}$ is also the fixer of $F^{v}_{j}$ in $W^{v}_{j}$. In any case, $W^{v}_{j}\cdot F^{v}_{j}$ is infinite, hence so is $W^{v}_{j}\cdot\lambda_{j}$, which ends the proof.
\end{proof}

\bibliographystyle{jepalpha}

\bibliography{abdellatif-hebert}
\end{document}